\numberwithin{equation}{section}
\newtheorem{theorem}{Theorem}[section]
\newtheorem{lemma}[theorem]{Lemma}
\newtheorem{proposition}[theorem]{Proposition}
\theoremstyle{definition}
\newtheorem{remark}[theorem]{Remark}
\def\R {\mathbb{R}}
\def\Z {\mathbb{Z}}
\def\N {\mathbb{N}}
\def\T {\mathbb{T}}
\newcommand{\eps}{\varepsilon}
\providecommand{\ip}[1]{\langle#1\rangle}
\providecommand{\abs}[1]{\left\lvert#1\right\rvert}
\providecommand{\norm}[1]{\left\|#1\right\|}
\title[On the stability of vacuum in the screened Vlasov-Poisson equation]{On the stability of vacuum \\
in the screened Vlasov-Poisson equation
}
\author{Mikaela Iacobelli}
\address{ETH Zurich and IAS Princeton}
\email{mikaela.iacobelli@math.ethz.ch, miacobelli@ias.edu}
\author{Stefano Rossi}
\address{ETH Zurich}
\email{stefano.rossi@math.ethz.ch}
\author{Klaus Widmayer}
\address{University of Vienna and University of Zurich}
\email{klaus.widmayer@univie.ac.at, klaus.widmayer@math.uzh.ch}
\begin{document}
\begin{abstract}
 We study the asymptotic behavior of small data solutions to the screened Vlasov-Poisson equation on $\R^d\times\R^d$ near vacuum. We show that for dimensions $d\geq 2$, under mild assumptions on localization (in terms of spatial moments) and regularity (in terms of at most three Sobolev derivatives) solutions scatter freely. In dimension $d=1$, we obtain a long time existence result in analytic regularity.
\end{abstract}
\maketitle

\tableofcontents
\section{Introduction}

This article is devoted to the study of the stability of vacuum in the screened Vlasov-Poisson system. 
This system models the behavior of interacting charged particles in plasmas, where long-range Coulomb forces are truncated by {Debye shielding}, see for instance \cite[Chapter 3]{Davidson1972} and \cite[Chapter 2]{Chen2016}. In addition, as discussed for example in \cite[Chapter 5]{BinneyTremaine2008} or \cite[Section 2.2]{Vogelsberger2014}, it is also used in astrophysical contexts to describe self-gravitating systems with {gravitational screening}, such as galaxies within dark matter halos. 

Here, we study the evolution of initially 
small particle distribution functions\footnote{The corresponding physically relevant, non-negative particle distribution is $f(x,v,t)=\mu^2(x,v,t)\geq 0$. For our arguments the $L^2$ based setting for $\mu$ is more technically convenient -- see \cite{IPWW2020} and also \cite{FOPW2024}.} $\mu:\R^d\times\R^d\times\R\to\R$, $d\geq 1$, satisfying
\begin{equation}\label{eq:scrVP}
\begin{aligned}
    \partial_t \mu +v\cdot\nabla_x\mu-\nabla_x\phi\cdot\nabla_v \mu&=0,\\
    (1-\Delta)\phi(x,t)=\rho(x,t)&=\int_{\R^d}\mu^2(x,v,t)dv.
\end{aligned}    
\end{equation}
Compared to the classical Vlasov-Poisson system, the standard Coulomb interaction kernel is replaced by screened interactions. This is particularly relevant from the point of view of ions in a massless electron approximation \cite{BGNS2018}, for example. In such contexts, a more complicated, nonlinear version of the Poisson equation in the second line of \eqref{eq:scrVP} is sometimes also considered (see e.g.\ \cite{GPI2021, B91, HK11, HKI17, CI23, GPI21, GI23}), for which not vacuum but spatially homogeneous (and suitably normalized) solutions can be natural stationary configurations.
Our focus, however, is on the essential features of screening in the absence of homogeneous backgrounds, which are captured adequately by \eqref{eq:scrVP}. 

In contrast to the classical Vlasov-Poisson equations (for electrons, where $-\Delta\phi=\rho$), screening de-singularizes the Poisson equation for the potential $\phi$, and is thus generally expected to lead to ``better-behaved'' dynamics. Nevertheless, even in the setting of screened interactions the long-time behavior of solutions to \eqref{eq:scrVP} is largely unknown. In the special case of dynamics near vacuum, in a first approximation it is natural to consider linearized dynamics, i.e.\ the behavior of solutions to the free transport equation, and then to ask whether nonlinear effects are of perturbative nature.\footnote{Hereby the sign in front of the nonlinear term in the first equation in \eqref{eq:scrVP} is of no consequence. For simplicity, we have chosen to stick to the plasma physical convention.} The answer to this latter question depends crucially on the time decay rates of the force field $\nabla_x\phi$ in \eqref{eq:scrVP}, which in turn depend on the dimension and on the form of the Poisson law in \eqref{eq:scrVP}. 

In all this, screened interactions are mathematically of particular interest 
in low dimensions  $d\leq 2$ \cite[Section 6]{Bed2022}, where the relevant stability mechanisms are comparatively weak. Indeed, in such settings, the corresponding classical Vlasov-Poisson equations are ill-understood, even near vacuum.\footnote{In $d=2$, to the best of our knowledge the strongest result to date concerns long-time (but not asymptotic) dynamics for a class of symmetry restricted initial data \cite{BAMP2024}, while in $d=1$ we are not aware of results going beyond local well-posedness. Moreover, recent works for the classical Vlasov-Poisson equations in $d=3$ suggest that certain spatially homogeneous configurations may be (quantifiably) ``more'' stable than vacuum -- see e.g.\ \cite{IPWW2024}.} As a consequence, whether or not nonlinear effects are of perturbative nature becomes a question of a delicate balance between linear decay mechanisms and nonlinear propagation of the ingredients (such as localization and regularity) required for their control. In addition to its main results for \eqref{eq:scrVP}, we hope that this article will contribute to an improved overall understanding of the stability mechanisms near vacuum in the unconfined setting, and in particular on the interplay of dimensions, localization and regularity.

Our first main result shows that the asymptotic behavior of this system in dimensions $d\geq 2$ is a free scattering dynamic. 
\begin{theorem}
\label{intro:thm1}
 Let $d\geq 2$. There exists $\eps_0>0$ such that if $\mu_0:\R^d\times\R^d\to\R$ 
 satisfies
 \begin{alignat}{3}
      &\textnormal{if }d\geq 3: && \quad \|\ip{x}^m \mu_0\|_{L^\infty_{x,v}(\R^d \times \R^d)}
      + \|\nabla_{x,v}\mu_0\|_{L^\infty_{x,v}(\R^d \times \R^d)}
      \le \eps\leq \varepsilon_0,&& \quad m>d,\label{eq:assump_3d}\\
      &\textnormal{if }d=2:&& \quad \|x \mu_0\|_{L^2_{x,v}(\R^2\times \R^2)}
        + \|\mu_0\|_{H^3_{x,v}(\R^2 \times \R^2)} \le \eps\leq\varepsilon_0,&&\label{eq:assump_2d}
 \end{alignat}
 then there exists a unique, global in time solution $\mu \in C_t(\R,W^{1,\infty}_{x,v}(\R^d \times \R^d))$, $d\geq 3$, resp.\ $\mu \in C_t(\R,H^3_{x,v}(\R^2 \times \R^2))$ to \eqref{eq:scrVP} for with $\mu(x,v,0)=\mu_0(x,v)$. 
 Moreover, the associated force fields decay as 
 \begin{equation}\label{eq:overall_decay}
   \norm{\nabla_x\phi(t)}_{L^\infty(\R^d)}\lesssim \eps^2\ip{t}^{-d-1}, 
     d\geq 3, \qquad 
   \norm{\nabla_x\phi(t)}_{L^\infty(\R^2)}\lesssim \eps^2\ip{t}^{-3+}, 
 \end{equation}
 and $\mu$ converges along trajectories of the linearized flow: there exists $\gamma_\infty \in L^\infty_{x,v}(\R^d \times \R^d)$, $d\geq 3$, resp.\ $\gamma_\infty \in H^1_{x,v}(\R^2 \times \R^2)$ such that
 \begin{equation}\label{eq:main_freescat}
     \mu(x+tv,v,t)\to \gamma_\infty(x,v),\quad t\to\infty, \quad \text{in }L^\infty_{x,v} \text{ resp.\ in } H^1_{x,v}.
 \end{equation}
\end{theorem}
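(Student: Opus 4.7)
The plan is to run a bootstrap argument in the Lagrangian frame of the free transport flow, centered on the dispersive decay of the spatial density $\rho$. Let
\begin{equation*}
    \gamma(x,v,t) := \mu(x+tv,v,t)
\end{equation*}
denote the modulated profile; a direct computation from \eqref{eq:scrVP} gives
\begin{equation*}
    \partial_t\gamma = \nabla_x\phi(x+tv,t)\cdot\bigl(\nabla_v - t\nabla_x\bigr)\gamma,
\end{equation*}
so that $\gamma$ is conserved in the absence of a force. Starting from standard local well-posedness, I fix a bootstrap norm $X_T$ tailored to the hypotheses \eqref{eq:assump_3d}--\eqref{eq:assump_2d} (the weighted and Lipschitz $L^\infty$ norm of $\gamma$ for $d\geq 3$; the $L^2$-weighted plus $H^3$ norm for $d=2$), assume $\sup_{t\in[0,T]}X_T(\gamma(t))\leq 2C\eps$, and aim to improve this to $C\eps$ for $\eps$ sufficiently small, giving global existence by continuation.

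The core dispersive estimate comes from changing variables $w = x-tv$ in
\begin{equation*}
    \rho(x,t) = \int_{\R^d}\gamma^2(x-tv,v,t)\,dv = t^{-d}\int_{\R^d}\gamma^2\!\left(w,\tfrac{x-w}{t},t\right)\,dw,
\end{equation*}
which yields $\norm{\rho(t)}_{L^\infty}\lesssim \eps^2\ip{t}^{-d}$ (using the weight for $d\geq 3$; using Sobolev embedding in $v$ plus the $L^2$ moment for $d=2$). Differentiating once in $x$ picks up an extra factor of $t^{-1}$ via the chain rule (since $v=(x-w)/t$), giving $\norm{\nabla_x\rho(t)}_{L^\infty}\lesssim\eps^2\ip{t}^{-d-1}$ provided we also control one derivative of $\gamma$. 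Since the Yukawa kernel $G=(1-\Delta)^{-1}\delta$ is smooth away from the origin with $G,\nabla G\in L^1(\R^d)$ and exponential tails, the identity $\nabla_x\phi = \nabla G\ast\rho$ transfers these bounds to produce \eqref{eq:overall_decay}.

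Because $\norm{\nabla_x\phi(t)}_{L^\infty}\in L^1_t$ for $d\geq 2$, the propagation of the bootstrap norm reduces to energy estimates on a linear transport equation with small, time-integrable forcing. The weighted and differentiated versions of $\gamma$ satisfy analogous equations whose right-hand sides are controlled by $\norm{\nabla_x\phi}_{L^\infty}$ and $\norm{\nabla_x^2\phi}_{L^\infty}$ (the latter decaying at the same rate by applying the dispersive estimate once more), and a Gronwall argument closes the bootstrap. The scattering statement \eqref{eq:main_freescat} then follows since $\partial_t\gamma$ is integrable in time in the chosen norm, forcing $\gamma(t)\to\gamma_\infty$ as $t\to\infty$.

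The main obstacle is $d=2$, where the raw dispersion only gives $\rho\sim t^{-2}$ and hence $\nabla_x\phi\sim t^{-3+}$, \emph{barely} integrable in time. The $L^\infty$ approach breaks down, and one must work in the $L^2$-based framework: the $H^3$ regularity furnishes, via the Sobolev embedding $H^{1+}_v\hookrightarrow L^\infty_v$ in two dimensions, a pointwise control on the integrand $\gamma^2(w,(x-w)/t,t)$ in the $v$-variable, while the single spatial moment $\norm{x\mu_0}_{L^2}$ provides just enough $L^1_w$ integrability to close, at the price of an arbitrarily small loss in the decay exponent. Propagating $\norm{x\gamma(t)}_{L^2}$ along nonlinear characteristics is the most delicate point: the commutator of the weight with the force field must be bounded in sharp Lebesgue/Sobolev norms so as not to exceed the fragile $L^1_t$-budget of $\nabla_x\phi$.
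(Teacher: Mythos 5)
Your $d\geq 3$ argument is essentially the paper's: filter the free transport, change variables $w\mapsto x-tv$ in the density to obtain $\rho\sim t^{-d}$, use $\nabla_v$-regularity to gain the extra $t^{-1}$ for the force and its $x$-derivatives, and close a bootstrap on $\|\ip{x}^m\gamma\|_{L^\infty}+\|\nabla_{x,v}\gamma\|_{L^\infty}$. That part is fine.

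For $d=2$ there is a genuine gap: you have mislocated the main difficulty and your proposed argument would not close. You identify the propagation of $\|x\gamma\|_{L^2}$ as the delicate point, but in the paper this is one of the \emph{easy} items (a single Grönwall step). The real obstruction is the propagation of \emph{velocity} regularity, $\|\gamma\|_{L^2_x H^j_v}$ for $j=2,3$. Writing the energy estimate for $\partial_v^\alpha\gamma$ produces terms of the schematic form $t^{|\alpha|+1}\,\partial_x^{|\alpha|+1}\phi(x+tv)\cdot\nabla_x\gamma\cdot\partial_v^\alpha\gamma$, and time-integrability of these requires $\|\partial_x^{|\alpha|}\nabla_x\phi\|_{L^\infty}\lesssim t^{-3-|\alpha|+}$, which by the very dispersive mechanism you invoke requires control of $|\alpha|+1$ velocity derivatives of $\gamma$. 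That is a derivative loop, and a naive ``small, time-integrable forcing plus Grönwall'' argument does not break it: the forcing on the highest-order velocity derivatives is only $O(t^{-1})$, not $L^1_t$. The paper resolves this by (i) tolerating slowly growing bounds $\|\gamma\|_{L^2_x H^j_v}\lesssim\eps\ip{t}^{C\eps^2}$ rather than the uniform $C\eps$ you aim for, (ii) exploiting that the worst terms are forced by $\nabla_x\gamma$, whose $Z$-norm $\|h\|_Z=\|h\|_{L^\infty_v L^2_x}$ (note the order of the Lebesgue exponents --- not the $L^2_xL^\infty_v$ quantity your Sobolev-embedding argument produces) stays uniformly bounded, and (iii) running a multilinear analysis through the Green-function representation $\phi=G_2\ast\rho$, with the dispersive change of variables applied in \emph{both} the density and the test factors, integrating the derivatives by parts onto $\chi(R^{-1/2}z)$ (see the quantities $I_R^{\theta_1,\theta_2,\beta_2}$, $\Im_R^{\theta_3,\theta_4}$ in the paper). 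None of this machinery is present in, or even gestured at by, your outline, and without it the $d=2$ bootstrap does not close.

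A smaller point: your claim that the spatial moment $\|x\gamma\|_{L^2}$ supplies ``the $L^1_w$ integrability'' in the density bound is not how the paper (or a correct direct argument) uses it. If you estimate $\gamma^2(w,(x-w)/t)\le\sup_v\gamma^2(w,v)$ pointwise and integrate, the resulting quantity $\|\gamma\|_{L^2_xL^\infty_v}^2$ is already controlled by $\|\gamma\|_{L^2_xH^2_v}^2$ with no moment at all. The moment (and the extra $v$-derivatives) enter in bounding the \emph{correction} between $\gamma(a,(x-a)/t)$ and $\gamma(a,x/t)$ --- the estimate \eqref{sec3:PL} via Littlewood--Paley in $v$ --- which is what allows the leading term to be measured in the $Z$-norm $\|\gamma\|_{L^\infty_vL^2_x}$, a quantity that can be uniformly propagated.
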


\begin{remark}
We highlight some points of interest:
 \begin{enumerate}
  \item Decay estimates for small data solutions to the screened Vlasov-Poisson equation in $d \ge 2$ go back to \cite{SL2007,CS2011,D22}, where sharp decay was obtained in \cite{D22} using the vector fields method  of \cite{S16}. In a setting of comparatively high regularity, this implies global stability of vacuum via decay of the density and its derivatives (compare \eqref{in1:d3}-\eqref{in2:d3}, \eqref{in:lin3}-\eqref{in:lin2} and Remarks \ref{rem:sharpened3d}, \ref{rem:sharpened2d} with \cite[Remark 1.2]{D22}). In contrast, the present work builds on the approach developed in \cite{IPWW2020,FOPW2024}, which allows for weaker assumptions on the initial data and -- in our view -- clarifies the relevance of the various moment and derivative controls.

 \item The scattering statement \eqref{eq:main_freescat} corresponds to asymptotics as in the linearized problem and is thus also known as free scattering. The key stability mechanism in our ``unconfined'' setting with $x\in\R^d$ is the dispersive decay of the spatial density $\rho$ and associated field $\phi$ as in the free transport equation. This occurs at a dimension-dependent rate and requires localization of $\mu$. As is clear from the point of view of characteristics, free scattering is implied if the force field $\norm{\nabla_x\phi(t)}_{L^\infty}$ in \eqref{eq:scrVP} decays at a rate faster than $t^{-2}$, which we show in \eqref{eq:overall_decay}. 
 \item A more precise version of this theorem is given later in Theorem \ref{sec2:main"} for $d\geq 3$ and Theorem \ref{sec3:main} for $d=2$. We highlight that no velocity moments are necessary, and regularity can be propagated. In particular, under stronger assumptions on the initial data the scattering statement holds in a stronger topology and the decay estimate \eqref{eq:overall_decay} can be improved to the sharp rate $\norm{\nabla_x\phi(t)}_{L^\infty(\R^d)}\lesssim \eps^2\ip{t}^{-d-1}$, $d\geq 2$ -- see also Remarks \ref{rem:sharpened3d} and \ref{rem:sharpened2d}. Moreover, each derivative of $\phi$ decays at a one order faster rate, provided sufficient regularity and localization are controlled.
 
 \item In the un-screened case, where the potential satisfies
 $-\Delta\phi=\rho$ rather than the second equation in \eqref{eq:scrVP}, the asymptotic behavior near vacuum (under similar assumptions as in Theorem \ref{intro:thm1}) is only understood in dimensions $d\geq 3$. For $d\geq 4$ this is given by free scattering \cite{P23}, whereas $d=3$ is scattering-critical: along the characteristics a logarithmic in time correction with an asymptotic electric field profile is necessary in order to understand asymptotic convergence 
 \cite{CK16,IPWW2020,FOPW2024,BVR24}. This is due to the weaker dispersive decay rates of $\Delta_x^{-1}\rho$ as compared to $(1-\Delta_x)^{-1}\rho$ and their derivatives -- see also the discussion in Section \ref{sec:intro_lin} below.
 \item It is also natural to consider \eqref{eq:scrVP} in front of a fixed background, whereby the relevant density $\rho$ is replaced by $\rho-n_0$ for some constant background density $n_0>0$ (and as already mentioned additional nonlinear terms may be considered to describe ion dynamics more precisely). This allows for a large family of spatially homogeneous steady states $\mu(x,v,t)=M_0(v)$. An important class within these are the Penrose stable equilibria \cite{Penrose60}, for which asymptotic stability results are available in dimension $d\geq 2$ \cite{BMM18,HKNR21,HHX22, HNX24}. Similarly, these rely on dispersive mechanisms and establish free scattering of solutions under suitable assumptions on the initial data.
 
 \item In the confined setting $(x,v)\in\T^d\times\R^d$, the free transport dynamics give rise to phase mixing, whereby regularity of $\mu$ can be traded for decay of $\rho, \phi$ etc, independently of the dimension $d$. The stability of vacuum (with asymptotic free scattering dynamic) is then a consequence of the stability of Penrose stable equilibria. In contrast to the present finite regularity setting, though, this requires analytic-type (Gevrey-$3$) regularity \cite{MV11,BMM2016, GNR2021,IPWW2025}, see also \cite{GI23} for the ionic case.

 \item Another interesting perspective concerns the setting of the Vlasov-Riesz system, where the screened Coulomb kernel of \eqref{eq:scrVP} is replaced by a Riesz kernel. While this does not affect the decay rate of the density $\rho$, it leads to a different decay behavior for $\phi$. The recent work \cite{HK24} studies this question for a family of such kernels on $\R^3\times\R^3$, and even establishes asymptotic dynamics when a polynomial in time correction to the characteristics is required. 

 \item 
 Concerning the final data problem where the asymptotic state is prescribed,
 in the un-screened Vlasov-Poisson system with $x \in \R^d$, a modified scattering final data result is proved in \cite{FOPW2024} for $d=3$. 
 On the contrary, in the confined case of the torus $x \in \mathbb{T}^d$ a free scattering result holds in any dimension requiring analytic-type regularities of the asymptotic state. While the results in \cite{IPWW2025, BCGIR24} assume final data of small size, the only results available with final data of size $\mathcal{O}(1)$ are \cite{CM98, G24, BCR22}.
 \end{enumerate}
\end{remark}

In the one-dimensional setting $d=1$, sharp decay of $\partial_x\phi$ hinges on control of the localization of a velocity derivative of $\mu$, which seems to lead to a two-fold derivative loss, which stands in the way of our understanding of asymptotic dynamics. Instead, we obtain the long-time stability of initially small and analytic solutions:
\begin{theorem}[Long-time stability in $d=1$]\label{intro:thm1d}
    There exists $\eps_0>0$ such that the following holds: Given $R>0$ and $\mu_0$ satisfying
    \begin{equation*}
        \sum_{a\in\{0,1\}}\sum_{n=0}^\infty \frac{R^n}{n!} \left(\norm{\partial_x^{n+a}\mu_0}_{L^2_{x,v}}
        + \norm{\partial_v^{n+a}\mu_0}_{L^2_{x,v}}\right) \leq \eps \le \varepsilon_0,
    \end{equation*}
    there exist $T\gtrsim R^{2}\eps^{-4}$ and a unique solution $\mu\in C_t([0,T],\cap_{k=0}^\infty H^k_{x,v}(\R\times\R))$ to the screened Vlasov-Poisson equation \eqref{eq:scrVP} with $\mu(x,v,0)=\mu_0(x,v)$. Moreover, $\gamma(x,v,t):=\mu(x+tv,v,t)$ satisfies
    \begin{equation*}
    \sum_{a\in\{0,1\}}\mathcal{E}_v[\partial_v^a\gamma(t)]+ \mathcal{E}_x[\partial_x^a\gamma(t)]\lesssim \eps,\qquad 0\leq t\leq T,
    \end{equation*}
    where
 \begin{equation}\label{eq:intro_ana_norms}
     \mathcal{E}_{v}[f]:= \sum_{n=0}^\infty \frac{\lambda^n_t}{n!}\norm{\partial_v^n f}_{L^2_{x,v}}, \qquad
     \mathcal{E}_{x}[f]:= \sum_{n=0}^\infty \frac{\lambda^n_t}{n!}\norm{\partial_x^n f}_{L^2_{x,v}}
 \end{equation}
 with $\lambda_t=\lambda_t(R,\eps) := R-C\eps^2\ip{t}^{\frac12}$ and $C>0$ a universal constant.
\end{theorem}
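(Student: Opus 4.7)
The plan is to pass to Lagrangian coordinates by setting $\gamma(x,v,t) := \mu(x+tv, v, t)$. Exploiting the algebraic identity $t\partial_x\gamma - \partial_v\gamma = -(\partial_v\mu)(x+tv, v, t)$, the equation \eqref{eq:scrVP} rewrites as
\begin{equation*}
  \partial_t \gamma = (\partial_x\phi)(x+tv, t)\,(\partial_v\mu)(x+tv, v, t),
\end{equation*}
an expression that suppresses the explicit factor of $t$ that would appear in the naive rewrite and will be crucial in the later estimates. I would then run a continuity argument on the energy
\begin{equation*}
  \mathbf{E}(t) := \sum_{a \in \{0,1\}} \bigl(\mathcal{E}_v[\partial_v^a \gamma(t)] + \mathcal{E}_x[\partial_x^a \gamma(t)]\bigr),
\end{equation*}
bootstrapping $\mathbf{E}(t) \leq C_0 \eps$ on $[0,T^*)$ and aiming to improve this to $\mathbf{E}(t) \leq \tfrac{C_0}{2}\eps$ for $t \leq T \sim R^2 \eps^{-4}$, the time at which $\lambda_t \to 0$ signals the breakdown of the analytic class.

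The driving mechanism is the Cauchy-Kovalevskaya identity
\begin{equation*}
  \frac{d}{dt}\mathcal{E}_*[f] = \dot\lambda_t\, \mathcal{E}_*[\partial_* f] + \mathcal{E}_*[\partial_t f], \qquad * \in \{x, v\},
\end{equation*}
in which $\dot\lambda_t \asymp -\eps^2 \ip{t}^{-1/2}$ gives a ``free'' control on one extra derivative. To match this rate on the nonlinear side I would use the dispersive representation $\rho(y,t) = \tfrac{1}{t}\int \gamma^2(z, (y-z)/t, t)\,dz$ together with iterated integration by parts in $v$ (each $y$-derivative costing $t^{-1}$ but buying a $v$-derivative landing on $\gamma$) and the one-dimensional Sobolev embedding $\|f\|_{L^\infty_v} \lesssim \|f\|_{L^2_v}^{1/2}\|\partial_v f\|_{L^2_v}^{1/2}$ to obtain, schematically, $\|\partial_y^k \rho(t)\|_{L^2_y} \lesssim t^{-k-1/2}\,\mathbf{E}(t)^2$. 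The chain rule $\partial_v^k[(\partial_x\phi)(x+tv)] = t^k(\partial_x^{k+1}\phi)(x+tv)$ forces analyticity of $\phi$ at the inflated radius $\lambda_t t$, but the $t^k$ thereby accumulated is exactly cancelled by the $t^{-k}$ dispersive gain, giving
\begin{equation*}
  \mathcal{K}(t) := \sum_{k\geq 0} \frac{(\lambda_t t)^k}{k!}\,\|\partial_x^{k+1}\phi(t)\|_{L^\infty_x} \lesssim \mathbf{E}(t)^2 \ip{t}^{-1/2}.
\end{equation*}
Together with an analogous bound at the natural radius $\lambda_t$, Leibniz and Cauchy products on the Taylor series produce an inequality of the form $\tfrac{d}{dt}\mathbf{E} \leq \dot\lambda_t\,\text{(shifted energies)} + C\,\mathcal{K}(t)\,\mathbf{E}(t)$. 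Under the bootstrap assumption the shifted-energy terms are absorbed by the good Cauchy-Kovalevskaya contribution (since $\mathcal{K}(t) \lesssim \eps^2 \ip{t}^{-1/2}$ matches $|\dot\lambda_t|$), and Gronwall yields $\mathbf{E}(t) \lesssim \mathbf{E}(0)\exp(C \eps^2 \ip{t}^{1/2})$, which remains $O(\eps)$ precisely as long as $\eps^2\sqrt{t} \lesssim R$, i.e.\ on the stated lifespan $T \asymp R^2\eps^{-4}$.

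The hard part will be managing the combinatorics of the Leibniz/Cauchy-product expansions so that the dispersive gain in $\mathcal{K}(t)$ indeed arrives with sharp enough constants to be absorbed by $|\dot\lambda_t|$; and in particular taming the mixed-derivative quantities (such as $\mathcal{E}_v[\partial_x\gamma]$) that naturally appear in the evolution of pure-derivative energies but are not directly part of $\mathbf{E}$. For the latter, I expect to use the elementary Fourier interpolation $\|\partial_v^n\partial_x\gamma\|_{L^2} \leq \|\partial_v^{n+1}\gamma\|_{L^2} + \tfrac{1}{\sqrt{n+1}}\|\partial_x^{n+1}\gamma\|_{L^2}$, which summed with the analytic weights yields $\mathcal{E}_v[\partial_x\gamma] \leq \mathcal{E}_v[\partial_v\gamma] + \mathcal{E}_x[\partial_x\gamma] \leq \mathbf{E}(t)$. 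The algebraic identity for $t\partial_x\gamma - \partial_v\gamma$ recorded above is essential at this stage, as it prevents the naively expected $t\cdot\mathcal{E}_v[\partial_x\gamma]$ loss and allows everything to close within the framework of $\mathbf{E}$. Existence and uniqueness in $\cap_k H^k_{x,v}$ on $[0,T]$ then follow from standard Picard iteration at high (but finite) Sobolev regularity, propagated by the a priori analytic bounds.
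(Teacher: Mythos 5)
Your overall architecture---the filtered unknown $\gamma$, analytic energies with a radius $\lambda_t\approx R-C\eps^2\ip{t}^{1/2}$, absorption of one-derivative losses by $\dot\lambda_t$, the dispersive conversion of $v$-derivatives of the composed field into powers of $t^{-1}$, and interpolation to tame mixed derivatives---is the same as the paper's. But there is a genuine gap at the very point you call ``crucial''. The identity $t\partial_x\gamma-\partial_v\gamma=-(\partial_v\mu)(x+tv,v,t)$ suppresses nothing: since $\mathbf{E}$ controls derivatives of $\gamma$, every estimate must re-express $(\partial_v\mu)(x+tv,v,t)=(\partial_v-t\partial_x)\gamma$, and the explicit factor of $t$ returns; the alternative of treating $\partial_v\mu$ along characteristics as its own unknown would require analytic control of $(\partial_v-t\partial_x)\gamma$, which grows linearly in $t$ and is not part of $\mathbf{E}$. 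The $t$ has to be beaten by field decay, and here your linear estimate is a factor of $t$ too weak: the route $\norm{\partial_y^k\rho}_{L^2_y}\lesssim t^{-k-1/2}\mathbf{E}^2$ followed by one derivative on the kernel gives exactly your claimed $\mathcal{K}(t)\lesssim\mathbf{E}^2\ip{t}^{-1/2}$, i.e.\ $\norm{\partial_x^{k+1}\phi}_{L^\infty_x}\lesssim t^{-k-1/2}$; but in the energy identity the worst (transport) contributions carry the weight $\ip{t}^{k+1}\norm{\partial_x^{k+1}\phi}_{L^\infty_x}$ ($t^k$ from the chain rule plus the explicit $t$ in $-t\partial_x\gamma$), so with your bound their coefficient is $\sim\eps^2\ip{t}^{1/2}$, which $\dot\lambda_t\asymp-\eps^2\ip{t}^{-1/2}$ cannot absorb. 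What is needed is $\norm{\partial_x^{k+1}\phi}_{L^\infty_x}\lesssim t^{-k-3/2}$ at analytic level, i.e.\ the bound \eqref{sec7:bound1N} of Lemma \ref{lem:1d_decay}, obtained by pushing all $k+1$ derivatives through the dispersive change of variables while keeping the top-derivative factor in $L^2_{x,v}$ (so it stays inside the energy) and only the low factor in $L^2_xL^\infty_v$; for the $a=1$ energies one also needs the companion norm $\mathcal{N}_v[\ip{t}^2\partial_x^2\phi]$, whose estimate unavoidably involves the second-order energies and must again be absorbed by $\dot\lambda_t$. Your two slips (the illusory removal of $t$, and the $t$-lossy field bound) cancel on paper, which is why the schematic inequality looks right, but neither step stands on its own.

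A second, lesser issue is the closure. Even with corrected coefficients, your plan keeps a non-integrable term $C\eps^2\ip{t}^{-1/2}\mathbf{E}$ and closes by Gr\"onwall, giving $\mathbf{E}(t)\lesssim\eps\exp(C\eps^2\ip{t}^{1/2})\le\eps e^{CR}$ on the lifespan; to close the bootstrap you must then take the bootstrap constant of size $e^{CR}$, which feeds back into the constant multiplying $\eps^2\ip{t}^{1/2}$ in $\lambda_t$ and degrades the lifespan, and in any case produces $R$-dependent constants, weaker than the stated $\mathbf{E}\lesssim\eps$ and $T\gtrsim R^2\eps^{-4}$ with universal implied constants. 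The paper's accounting avoids this: every term with the slow $\eps^2\ip{t}^{-1/2}$ rate carries exactly one extra derivative and is absorbed into $\dot\lambda_t$ times the shifted energies, while everything left over is $O(\eps^3\ip{t}^{-3/2})$, integrable in time, yielding the clean improvement $\eps\mapsto\eps+C\eps^3$ and hence the stated lifespan.
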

\begin{remark}
 We comment on some points of relevance:
 \begin{enumerate}
     \item We work with analytic norms that incorporate a radius of analyticity $\lambda_t$ that decreases over time, since this allows for a loss of one derivative in energy type estimates, enabling us to improve over the trivial rate $t^{-1}$ of decay for the force field $\partial_x\phi$ to obtain the faster rate $t^{-3/2}$. However, with only energy type norms at disposal it does not seem feasible to obtain the sharp decay, and can thus only close an estimate on a finite time scale. This time scale is determined by the fact that the terms with the slowest time decay are also the ones with highest derivative loss, and are thus only controlled through an appropriately shrinking radius of analyticity, as is classical in such arguments \cite{N72, N77}.
     \item After the publication online of this work, a preprint \cite{Wei24} of D.\ Wei appeared, demonstrating a conjecture that we had formulated in the prior version of this remark: small, Gevrey-$2$ regular initial data lead to global solutions with sharp decay $\norm{\partial_x\phi}_{L^\infty}\lesssim \eps^2\ip{t}^{-2}$ and give rise to a modified scattering dynamic as in the un-screened Vlasov-Poisson equations in $d=3$.
 \end{enumerate}
\end{remark}

\subsection{Discussion of proofs}\label{sec:intro_proofs}
To understand the long-time dynamics it is convenient to filter out the free transport dynamic $\partial_t+v\cdot\nabla_x$ in \eqref{eq:scrVP} and work with the unknown
\begin{equation*}\label{def:gamma}
    \gamma(x,v,t):=\mu(x+tv,v,t),
\end{equation*}
which satisfies
\begin{equation}
\label{eq:gamma}
\begin{aligned}
    \partial_t \gamma(x,v,t)&=\nabla_x \phi(x+tv,t) \cdot \{\nabla_v - t\nabla_x\} \gamma(x,v,t),\\
    (1-\Delta)\phi(x,t)&=\rho(x,t)=\int_{\R^d}\gamma^2(x-vt,v,t) dv,
\end{aligned}
\end{equation}
and has the same initial data $\gamma(x,v,0)=\mu_0(x,v)$ as \eqref{eq:scrVP}. 
We will work with this equation to establish asymptotic resp.\ long-time dynamics of  $\gamma$ in $d\geq 2$ resp.\ $d=1$. The essential mechanism hereby is the decay over time of the spatial density $\rho$ and derived quantities, in particular the force field $\nabla_x\phi$. In the spatially unbounded setting ($x\in\R^d$) such decay occurs due to the dispersive effects of the free transport equation, at a rate that depends on the dimension $d$ and relies on localization. As is clear from \eqref{eq:gamma}, the rate of decay plays a decisive role in determining the time span over which solutions can be controlled. The main challenge is thus to understand the interplay of the following two key ingredients:
\begin{enumerate}[wide]
    \item \emph{Linear decay (see also Section \ref{sec:intro_lin}).} Given a function $h:\R^d\times\R^d\to\R$,
    consider the associated density and field
    \begin{equation*}
        \rho_h(x,t):=\int h^2(x-tv,v)dv,\qquad \phi_h:=(1-\Delta)^{-1}\rho_h,
    \end{equation*}
    and quantify requirements on $h$ to obtain (optimal) decay rates of $\rho_h$, $\nabla_x\phi_h$ etc. As is well known, the underlying mechanisms for decay rely on localization and regularity, which need to be captured here in a way that is minimally demanding for the nonlinear analysis.
    \item \emph{Nonlinear propagation of localization and regularity (see also Section \ref{sec:intro_nonlin}).} Consider an initially small, suitably regular and localized solution $\gamma$ to \eqref{eq:gamma}, and use the linear decay to propagate localization and regularity of $\gamma$. As is apparent from \eqref{eq:gamma}, one needs to balance these arguments well, since e.g.\ velocity regularity of $\gamma$ comes at a cost of time decay for derivatives of $\nabla_x\phi$, which in turn require additional velocity regularity of $\gamma$. On which time span can the corresponding bootstrap be closed? 

    The simplest setting hereby is that of dimensions $d\geq 3$. A simple bootstrap involving uniform (in time) bounds on $x$-moments as well as one order of derivatives on $\gamma$ and the comparatively rapid decay of $\nabla_x\phi$ then shows that small initial data in \eqref{eq:gamma} lead to solutions that scatter freely (see Section \ref{sec2}). In contrast, for $d=2$ the nonlinear arguments are much more delicate: we obtain a hierarchy of slowly growing bounds for velocity regularity (and $x$-moments) via multilinear, $L^2$-based estimates as introduced in \cite{IPWW2020}, combined with uniform control of a lower regularity $Z$-norm. The associated nonlinear bootstrap just barely closes in finite regularity $H^3_{x,v}$ -- see Section \ref{sec3}. Finally, in $d=1$ we are confronted with a dual loss of derivatives: one required to obtain linear decay and one in the equation for $\gamma$. We are then only able to close a nonlinear bootstrap involving analytic regularity on a finite, but long time interval -- see Section \ref{sec:1d}. In particular, we do not obtain sharp decay rates for $\nabla_x\phi$.
\end{enumerate}
\medskip
We discuss next these two points in some more detail.

\subsubsection{Linear decay}\label{sec:intro_lin}
A simple change of variables $v\mapsto x-tv=:a$ shows that the density $\rho_h(x,t)$ decays at the rate $\mathcal{O}(t^{-d})$, i.e.\
\begin{equation}\label{eq:rho_intro}
 \rho_h(x,t)=t^{-d}\int h^2\Bigl(a,\frac{x-a}{t}\Bigr)da, 
\end{equation}
provided that $h$ is suitably localized. Importantly, this formula also shows that higher derivatives $\partial_x^\alpha\rho$ of $\rho$ decay at faster rates $\mathcal{O}(t^{-d-\abs{\alpha}})$, provided that $h$ is sufficiently regular in $v$, e.g.\
\begin{equation*}
 \nabla_x\rho_h(x,t)=2t^{-d-1}\int h\Bigl(a,\frac{x-a}{t}\Bigr)\nabla_vh\Bigl(a,\frac{x-a}{t}\Bigr)da.   
\end{equation*}
In our screened setting, these arguments transfer to $\phi_h$, since the map $\rho_h\mapsto\phi_h=(1-\Delta)^{-1}\rho_h$ is bounded on $L^p(\R^d)$, $1\leq p\leq \infty$ (see e.g.\ \cite[Chapter 5, \S 3]{S1970}). Heuristically, one thus expects that the sharp decay of the force field is $\norm{\nabla_x\phi_h}_{L^\infty}= \mathcal{O}(t^{-d-1})$, which can allow for the right hand side of \eqref{eq:gamma} to be time integrable whenever $d\geq 2$, and hints at the free scattering dynamic. However, this requires a regime in which one can nonlinearly propagate sufficient regularity and smoothness. It is thus important to establish a larger class of decay estimates that also give slower decay rates under less restrictive assumptions.

To make these arguments precise, we observe that
\begin{equation*}
 \phi_h(x,t)=(1-\Delta)^{-1}\rho_h(x,t)=G_d\ast \rho_h(x,t),
\end{equation*}
where $G_d$ is the Green function of $(1-\Delta)$ on $\R^d$, given by\footnote{This is a particular example of Bessel kernel, see e.g.\ \cite[Chapter 5]{S1970}.}
\begin{equation}
\label{green}
    G_d(x):=\frac{1}{(4\pi)^{\frac{d}{2}}}
    \int_0^\infty 
    \chi\Bigl(R^{-\frac12}x\Bigr) e^{-R}
    \frac{dR}{R^{\frac{d}{2}}},
    \qquad
    \chi(x):=e^{-\frac{\abs{x}^2}{4}},
    \qquad
    x \in \R^d \setminus \{0\}.
\end{equation}
In dimension $d\geq 3$, a basic estimate (see Lemma \ref{sec2:lemma}) involving only localization in $x$ gives that $\norm{\nabla_x\phi}_{L^\infty}=\mathcal{O}(t^{-d})$. This turns out to be sufficient also for nonlinear arguments, globally in time.

In dimension $d=2$, a more refined analysis is necessary. To capture the leading order behavior of \eqref{eq:rho_intro}, we follow \cite{IPWW2020} and introduce the norm
\begin{equation}
\label{znorm_intro}
    \norm{h}_Z:= \norm{h}_{L^\infty_v L^2_x},
\end{equation}
so that
\begin{equation*}
\norm{\rho_h(t)}_{L^\infty(\R^2)}\lesssim t^{-2}\norm{h}_Z^2+o(t^{-2}).
\end{equation*}
The corresponding sharp estimates for $\nabla_x\phi_h$ are direct, but not of particular importance for the nonlinear bootstrap, so instead we focus on decay estimates that give the almost optimal decay rate under slightly less stringent assumptions: in Proposition \ref{prop:decay2} we show that for $\delta>0$ there holds that
\begin{equation*}
 \norm{\partial^\alpha_x \nabla_x\phi(x,t)}_{L^\infty_x(\R^2)}\lesssim_{\delta} \ip{t}^{-2-|\alpha|+\delta}\min\left\{\norm{h}_{L^2_xH^{|\alpha|+1}_v}^2,\ip{t}^{-1}\norm{h}_{L^2_xH^{|\alpha|+2}_v}^2\right\},\quad\alpha\in\N_0^2. 
\end{equation*}

In dimension $d=1$, we work with analytic norms but are not able to obtain estimates that allow for sharp linear decay (see also the discussion in Section \ref{sec:intro_nonlin} below), and instead guarantee bounds of the order $\norm{(t\partial_x)^k\partial_x\phi}_{L^\infty}=\mathcal{O}(t^{-\frac32})$, $k\in\N_0$ -- see Lemma \ref{lem:1d_decay}.
 
\subsubsection{Nonlinear propagation of localization and regularity}\label{sec:intro_nonlin}
The local well-posedness of \eqref{eq:gamma} in the topologies used in this article is classical, and we thus focus here on the bootstrap arguments used to establish nonlinear control. We will thus assume that we are given a solution $\gamma$ to \eqref{eq:gamma} on at time interval $[0,T]$ that satisfies certain smallness, regularity and localization assumptions, and will subsequently show that the corresponding bounds can be improved, which by continuity implies that the initial assumptions on the solution hold on a longer time interval. As mentioned before, the crucial step hereby is to obtain suitable decay bounds for the force field (and its derivatives).

For $d\geq 3$, the dispersive decay is rapid enough that a straightforward bootstrap involving on the simple topology 
 $\norm{\ip{x}^m \gamma(t)}_{L^\infty_{x,v}(\R^d \times \R^d)} + \norm{\nabla_{x,v} \gamma}_{L^\infty_{x,v}(\R^d \times \R^d)}  $
can be closed globally in time -- see Lemma \ref{sec2:bootstrap}.

For $d=2$, the main difficulty is to close estimates for the velocity regularity. For this, we lean on the approach developed in \cite{IPWW2020} and establish a hierarchy of bounds where the $L^2_xH^j_v$, $j=2,3$, energies of $\gamma$ grow slowly over time, whereas several lower order quantities including the aforementioned $Z$-norm, an $x$-moment and $x$-derivatives remain uniformly bounded, i.e.\ we show in Proposition \ref{sec3:boot} that the following bounds can be bootstrapped:
\begin{equation*}
\|\gamma\|_{L^2_x H^3_v}\le \varepsilon \ip{t}^\delta,\quad \|\gamma\|_Z + \|\gamma \|_{L^2_v H^3_x} + \|x \gamma \|_{L^2_{x,v}}\le \varepsilon. 
\end{equation*}
The latter bounds follow essentially from the corresponding linear decay estimates, since $x$-derivatives of $\phi$ decay at faster rates. For  velocity regularity $\norm{\gamma(t)}_{L^2_xH^j_v}$, $j=2,3$, we make use of the symmetric structure of the energy estimates to carry out a multilinear analysis, relying also on the precise structure of the screened kernel. A crucial ingredient hereby is a triangular structure of the system of derivatives in \eqref{eq:gamma}: for regularity in $v$, the highest order terms in $t$ are forced by derivatives in $x$, which in turn do not grow over time. The resulting analysis requires some care, and is detailed in Proposition \ref{sec3:boot} and its proof. We sketch here a simplified picture of this approach: let us consider a prototypical term that arises in the propagation of two velocity derivatives $\partial_{v_j}^2\gamma$ in \eqref{eq:gamma} and reads\footnote{This corresponds to the contribution $\nabla_x\phi(x+tv,t)\cdot(-t\nabla_x\gamma)$ on the right hand side side of \eqref{eq:gamma}, with the derivative $\partial_{v_j}^2$ landing on $\nabla_x\phi$, paired with $\partial_{v_j}^2\gamma$.}
\begin{equation}\label{eq:sample}
\begin{aligned}
 &\ip{\partial_{v_j}^2(\nabla_x\phi(x+tv,t))t\nabla_x\gamma,\partial_{v_j}^2\gamma}_{L^2_{x,v}}=t^3\ip{\partial_{x^j}^2\nabla_x\phi(x+tv,t)\cdot\nabla_x\gamma,\partial_{v_j}^2\gamma}_{L^2_{x,v}} \\
 &\;=t^3\iint_{x,v\in\R^2} \int_{y\in\R^2}\partial_{x^j}^2\nabla_x G_2(x+tv-y)\rho(y,t)dy\cdot\nabla_x\gamma(x,v,t)\partial_{v_j}^2\gamma(x,v,t)dxdv\\
 &\;=t^3\iint_{x,v\in\R^2} \iint_{y,w\in\R^2}\partial_{x^j}^2\nabla_x G_2(x+tv-y)\gamma^2(y-tw,w,t)\\
 &\hspace{5cm}\cdot\nabla_x\gamma(x,v,t)\partial_{v_j}^2\gamma(x,v,t)dydwdxdv\\
 &\;=-t^{-1}\iint_{x,v\in\R^2} \iint_{y,w\in\R^2}\!\!\!\partial_{y^j}^2\nabla_y G_2(x-y)\gamma^2\Bigl(a,\frac{y-a}{t},t\Bigr)\\
 &\hspace{5cm}\cdot\nabla_x\gamma\Bigl(b,\frac{x-b}{t},t\Bigr)\partial_{v_j}^2\gamma\Bigl(b,\frac{x-b}{t},t\Bigr)dydadxdb,
\end{aligned} 
\end{equation}
where in the last line we used the dispersive change of variables $w\mapsto y-tw=:a$, $v\mapsto x-tv=:b$. One may now integrate by parts the derivatives on $G_2$ to obtain further decay in time, albeit at the cost of regularity in $v$. Notice moreover that in order to close the estimates, the highest order velocity derivatives in $\gamma$ have to be estimated in the corresponding $L^2$-based spaces, while for lower orders we have more flexibility and can in particular make use of the boundedness of $\norm{\gamma(t)}_Z$. Amongst others, two integration by parts in $y_j$ in \eqref{eq:sample} give rise to the contribution
\begin{equation*}
\begin{aligned}
  &\Biggl\vert t^{-3}\iint_{x,v\in\R^2} \iint_{y,w\in\R^2}\!\!\!\nabla_y G_2(x-y)\gamma\Bigl(a,\frac{y-a}{t},t\Bigr)\partial_{v_j}^2\gamma\Bigl(a,\frac{y-a}{t},t\Bigr)\\
 &\hspace{5cm}\cdot\nabla_x\gamma\Bigl(b,\frac{x-b}{t},t\Bigr)\partial_{v_j}^2\gamma\Bigl(b,\frac{x-b}{t},t\Bigr)dydadxdb\Biggr\vert\\
 &\qquad \lesssim t^{-1}\norm{\gamma}_Z\norm{\partial_{v_j}^2\gamma}_{L^2_{x,v}}\norm{\nabla_x\gamma}_Z\norm{\partial_{v_j}^2\gamma}_{L^2_{x,v}} +l.o.t.\\
 &\qquad\lesssim t^{-1}\eps^2 \norm{\partial_{v_j}^2\gamma}_{L^2_{x,v}}^2 +l.o.t.,
\end{aligned} 
\end{equation*}
where in the last line we have used the bootstrap assumptions and denoted lower order terms as l.o.t. Via Gr\"onwall's inequality this leads to an acceptable contribution.

For $d=1$, decay of $\norm{\partial_x\phi}_{L^\infty}$ at the sharp rate $t^{-2}$ requires the control of $\norm{\partial_v\gamma}_Z$, as is clear from the linear analysis. However, this leads to a double derivative loss, since $Z$ is not an energy norm, and precludes a direct nonlinear analysis even in an analytic functional setting. To avoid this issue, we work exclusively with energy type norms of analytic regularity and time-dependent analyticity parameter, as in \eqref{eq:intro_ana_norms}.
These are not sufficient to obtain sharp decay estimates, and as a consequence we are not able to understand the asymptotic behavior of solutions. Instead, we close a nonlinear bootstrap (see Proposition \ref{sec7:boot}) on a long time span $T$, quantified in terms of the size $\eps$ of the initial data as $T\sim \mathcal{O}(\eps^{-4})$.

\section{Free scattering in $d\ge 3$}
\label{sec2}
In this section we establish Theorem \ref{intro:thm1} for $d\geq 3$, showing the stability of vacuum in \eqref{eq:scrVP} under sufficiently small, smooth and localized perturbations. As discussed in the introduction (see Section \ref{sec:intro_proofs}), we do this by studying the evolution of the profile $\gamma(x,v,t):=\mu(x+tv,v,t)$, which satisfies the equation \eqref{eq:gamma} with initial data $\gamma(x,v,0)=\mu(x,v,0)=\mu_0(x,v)$. In a first step, in Section \ref{sec:3d_lin} we establish the relevant decay estimates for $\nabla_x\phi$. These are then used in Section \ref{sec:3d_nonlin} to establish a bootstrap that globally controls solutions and allows us to deduce the asymptotic behavior, a free scattering dynamic.

\subsection{Linear analysis: dispersive decay estimates}\label{sec:3d_lin}
We recall from Section \ref{sec:intro_lin} that the Green function for $(1-\Delta)$ on $\R^d$ is given by 
\begin{equation*}
    G_d(x)=\frac{1}{(4\pi)^{\frac{d}{2}}}
    \int_0^\infty 
    \chi\Bigl(R^{-\frac12}x\Bigr) e^{-R}
    \frac{dR}{R^{\frac{d}{2}}},
    \qquad
    \chi(x)=e^{-\frac{\abs{x}^2}{4}},
    \qquad
    x \in \R^d \setminus \{0\},
\end{equation*}
and will use the notation
\begin{equation}
\label{sec2:greenconv}
    \rho_h(x,t):=\int h^2(x-tw,w)dw,\qquad \phi_h(x,t):=(1-\Delta)^{-1}\rho_h(x,t)=(G_d*\rho_h)(x,t)
\end{equation}
for $h:\R_x^d\times\R_v^d  \to \R$.
We establish the following linear decay estimates:
\begin{lemma}[Linear Decay in $d\ge3$] 
\label{sec2:lemma}
For $m>d/2$ there holds that
    \begin{equation}
    \label{in1:d3}
        \norm{\nabla_x \phi_h(x,t)}_{L^\infty_x(\R^d)}
        \lesssim 
        \ip{t}^{-d} 
        \norm{\ip{x}^m h}_{L^\infty_{x,v}(\R^d \times \R^d)}^2,
    \end{equation} 
while for $m>d$
    \begin{align}   
        \norm{\nabla_x \phi_h}_{L^\infty_x}&\lesssim \ip{t}^{-d-1} \norm{\ip{x}^m h}_{L^\infty_{x,v}} \norm{\nabla_v h}_{L^\infty_{x,v}},\label{in1.1:d3}\\
        \|\partial_{x_k} \nabla_x \phi_h(x,t)\|_{L^\infty_x} &\lesssim \ip{t}^{-d-1}\|\ip{x}^m h\|_{L^\infty_{x,v}} \|\nabla_v h\|_{L^\infty_{x,v}},\qquad k\in\{1,\ldots, d\}. \label{in2:d3}
    \end{align}    
\end{lemma}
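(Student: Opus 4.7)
The plan is to combine the dispersive change of variables in $\rho_h$ with the $L^1$-integrability of the Bessel kernel $G_d$ and its gradient on $\R^d$, $d\geq 3$. From the heat-kernel representation \eqref{green}, the factor $e^{-R}$ forces exponential decay of $G_d$ and $\nabla G_d$ as $\aabs{x}\to\infty$, while as $\aabs{x}\to 0$ a standard asymptotic analysis (change of variables $R=s\aabs{x}^2/4$ in \eqref{green}) gives $G_d(x)\sim c_d\aabs{x}^{2-d}$ and $\nabla G_d(x)\sim c_d'\aabs{x}^{1-d}$, both integrable in $\R^d$ for $d\geq 3$. Hence $G_d,\nabla G_d\in L^1(\R^d)$, and the three estimates reduce, via Young's inequality applied to the representations $\nabla_x\phi_h=\nabla G_d*\rho_h=G_d*\nabla_x\rho_h$ and $\partial_{x_k}\nabla_x\phi_h=\nabla G_d*\partial_{x_k}\rho_h$, to pointwise $L^\infty_x$ bounds on $\rho_h$ and on its first $x$-derivatives.

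For those pointwise bounds I would make the dispersive change of variables $a=x-tw$ in \eqref{sec2:greenconv} and obtain $\rho_h(x,t)=t^{-d}\int_{\R^d} h^2(a,(x-a)/t)\,da$. Inserting $\aabs{h(a,\cdot)}^2\le \norm{\ip{x}^m h}_{L^\infty_{x,v}}^2\ip{a}^{-2m}$ and using $m>d/2$ to absorb $\int\ip{a}^{-2m}da<\infty$ yields $\norm{\rho_h(t)}_{L^\infty_x}\lesssim \ip{t}^{-d}\norm{\ip{x}^m h}_{L^\infty_{x,v}}^2$, and hence \eqref{in1:d3}. For the sharper rates \eqref{in1.1:d3}--\eqref{in2:d3}, the extra factor $t^{-1}$ arises by trading one $x$-derivative of $\rho_h$ for a $v$-derivative of $h$: since $x$ enters the dispersive representation only through the second argument of $h^2$, differentiating under the integral yields
$$\nabla_x\rho_h(x,t)=2t^{-d-1}\int h(a,(x-a)/t)\,(\nabla_v h)(a,(x-a)/t)\,da,$$
and the bound $\aabs{h}\le \norm{\ip{x}^m h}_{L^\infty}\ip{a}^{-m}$ combined with $m>d$ (so $\int\ip{a}^{-m}da<\infty$) produces $\norm{\nabla_x\rho_h(t)}_{L^\infty_x}\lesssim \ip{t}^{-d-1}\norm{\ip{x}^m h}_{L^\infty}\norm{\nabla_v h}_{L^\infty}$. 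Plugging this into $\nabla_x\phi_h=G_d*\nabla_x\rho_h$ and $\partial_{x_k}\nabla_x\phi_h=\nabla G_d*\partial_{x_k}\rho_h$ then closes \eqref{in1.1:d3} and \eqref{in2:d3} by a second application of Young's inequality.

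I expect the main technical point to be the bookkeeping behind the identity for $\nabla_x\rho_h$, since this is exactly the mechanism that converts one order of $v$-regularity into the extra $t^{-1}$ decay needed in \eqref{in1.1:d3}--\eqref{in2:d3} and is the prototype of the many similar manipulations used later in $d=2$; beyond this, the only subtle ingredient is verifying $G_d,\nabla G_d\in L^1(\R^d)$ directly from \eqref{green}, which is immediate from the exponential factor $e^{-R}$ at infinity together with the $\aabs{x}^{2-d}$ and $\aabs{x}^{1-d}$ asymptotics near the origin.
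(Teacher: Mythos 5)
Your proposal is correct and rests on the same mechanism as the paper's proof: the dispersive change of variables $w\mapsto a=x-y-tw$ giving the $t^{-d}$ factor, and the trade of one $x$-derivative of the density for a $v$-derivative of $h$ at the cost of an extra $t^{-1}$, which the paper implements by integrating by parts inside the subordination formula \eqref{green} while you implement it by differentiating $\rho_h$ under the integral and then invoking Young's inequality with $G_d,\nabla G_d\in L^1(\R^d)$ (a fact the paper's computation also establishes implicitly via $\int_0^\infty R^{-1/2}e^{-R}\,dR<\infty$). This is only a reorganization, not a different method, and like the paper it really produces pure negative powers of $t$, so the $\ip{t}$-form of the bounds for $t\lesssim 1$ is glossed in the same way in both arguments.
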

\begin{proof}
We assume without loss of generality that $t\geq 1$. From \eqref{green} we obtain after a change of variables $w\mapsto x-y-tw=:a$ that
\begin{equation}
\label{eq:3dlinpf_1}
\begin{aligned}
    \nabla_x\phi_h (x,t)
    &=
    \int_{\R^d_y} \nabla_xG_d(x-y) \rho_h(y,t) dy
    = \iint_{\R^d_{y,w}}
    \nabla_yG_d(y)h^2(x-y-tw,w) dy dw
    \\
    &=t^{-d}\frac{1}{(4\pi)^{\frac{d}{2}}}\int_0^\infty \frac{e^{-R}}{R^{\frac{d+1}{2}}}
    \iint_{\R^d_{y,a}}
    \nabla\chi\Big(R^{-\frac{1}{2}}y\Big)h^2\Big(a,\frac{x-y-a}{t}\Big) 
    dy da dR.
\end{aligned}
\end{equation}
It follows that for a given integer $m>d/2$ there holds that
\begin{align*}
    \|\nabla_x\phi_h (x,t)\|_{L^\infty_x}
    &\le t^{-d}
    \|\ip{x}^m h\|^2_{L^\infty_{x,v}}\int_0^\infty  \frac{e^{-R}}{(4\pi)^{\frac{d}{2}} R^{\frac{d+1}{2}}}   
    \iint_{\R^d_{y,a}}
    \nabla\chi\Big(R^{-\frac{1}{2}}y\Big) \frac{dy da}{\ip{a}^{2m}}dR \\
    &\lesssim  t^{-d}\|\ip{x}^m h\|^2_{L^\infty_{x,v}} \int_0^\infty \frac{ e^{-R}}{ R^{\frac12}}dR,
\end{align*}
where in the first inequality we multiplied and divided by $\ip{a}^{2m}$, taking the $L^\infty$ norm of the $m$-th spatial moment of $h$. This yields the bound in \eqref{in1:d3}. For \eqref{in1.1:d3} we observe that an integration by parts in \eqref{eq:3dlinpf_1} shows that
\begin{equation}\label{eq:nablaphiibp}
\begin{aligned}
    \nabla_x\phi_h (x,t)
    &=\frac{t^{-d-1}}{(4\pi)^{\frac{d}{2}}}\int_0^\infty  
    \frac{e^{-R}}{R^{\frac{d}{2}}}
    \iint_{\R^d_{z,a}}
    \chi\Big(R^{-\frac{1}{2}}z\Big)
    2 h\Big(a,\frac{x-z-a}{t}\Big) \ \nabla_v h\Big(a,\frac{x-z-a}{t}\Big) 
    dz da dR,
\end{aligned}
\end{equation}
and thus for $m>d$
\begin{equation*}
    \norm{\nabla_x \phi_h}_{L^\infty_x}\lesssim 
    \ip{t}^{-d-1} \norm{\ip{x}^m h}_{L^\infty_{x,v}} \norm{\nabla_v h}_{L^\infty_{x,v}}.
\end{equation*}
Regarding \eqref{in2:d3}, an analogous computation gives that for $k \in \{1,\dots, d\}$
\begin{equation*}
\begin{aligned}
    \partial_{x_k} \nabla_{x}\phi_h (x,t)
    &=\frac{t^{-d-1}}{(4\pi)^{\frac{d}{2}}}\int_0^\infty 
         \frac{e^{-R}}{R^{\frac{d+1}{2}}}\\
    &\qquad \cdot 
    \iint_{\R^d_{z,a}}
    \nabla\chi\Big(R^{-\frac{1}{2}}z\Big)
    2 h\Big(a,\frac{x-z-a}{t}\Big) \partial_{v_k} h\Big(a,\frac{x-z-a}{t}\Big) 
    dz dadR,
\end{aligned}
\end{equation*}
and the claim follows as above.
\end{proof}

\subsection{Nonlinear analysis: bootstrap estimates and free scattering}\label{sec:3d_nonlin}
Next we establish the nonlinear bootstrap required that will allow us to understand the asymptotic behavior.

\begin{lemma}[Bootstrap estimate in $d \ge 3$]
\label{sec2:bootstrap}

There exists $\eps_0>0$ such that if $\gamma$ is a solution to \eqref{eq:gamma} on $[0,T]$ satisfying for $0<\eps<\eps_0$ the initial data bounds \eqref{eq:assump_3d} and
\begin{equation}
    \label{sec2:in1}
    \|\ip{x}^m \gamma(t)\|_{L^\infty_{x,v}(\R^d \times \R^d)} +\|\nabla_{x,v} \gamma(t)\|_{L^\infty_{x,v}(\R^d \times \R^d)}\le 4\varepsilon, 
    \end{equation}
then in fact we have the improved bounds
\begin{equation}
    \label{sec2:in3}
    \|\ip{x}^m \gamma(t) \|_{L^\infty_{x,v}(\R^d\times \R^d)} +
    \|\nabla_{x,v} \gamma(t)\|_{L^\infty_{x,v}(\R^d \times \R^d)}\le 2\varepsilon.
    \end{equation}
\end{lemma}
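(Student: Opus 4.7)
The plan is to work in the Lagrangian picture, rewriting \eqref{eq:gamma} as the transport equation $\partial_t\gamma + U\cdot\nabla_{x,v}\gamma = 0$ with drift $U(x,v,t) = (tF(x,v,t),-F(x,v,t))$, where $F(x,v,t):=\nabla_x\phi(x+tv,t)$. Denote by $\Phi_t(x,v)=(X(t,x,v),V(t,x,v))$ the associated characteristic flow, solving
\[
    \dot X = tF(X,V,t),\qquad \dot V = -F(X,V,t),\qquad \Phi_0 = \mathrm{id},
\]
so that $\gamma(\Phi_t(x,v),t)=\mu_0(x,v)$ for all $(x,v)$ and $t\in[0,T]$. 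As a first step, I feed the bootstrap assumption \eqref{sec2:in1} (together with $m>d$ from \eqref{eq:assump_3d}) into Lemma \ref{sec2:lemma} and apply \eqref{in1.1:d3}--\eqref{in2:d3} to extract the sharp decay
\[
    \|\nabla_x\phi(t)\|_{L^\infty_x(\R^d)} + \|\nabla_x^2\phi(t)\|_{L^\infty_x(\R^d)} \lesssim \varepsilon^2 \langle t\rangle^{-d-1}.
\]
Since composition with $x\mapsto x+tv$ preserves $L^\infty_{x,v}$, this yields $\|F(t)\|_{L^\infty_{x,v}}+\|\nabla_x F(t)\|_{L^\infty_{x,v}}\lesssim \varepsilon^2\langle t\rangle^{-d-1}$, while the chain rule produces an extra $t$ factor: $\|\nabla_v F(t)\|_{L^\infty_{x,v}}\lesssim t\,\varepsilon^2\langle t\rangle^{-d-1}\lesssim\varepsilon^2\langle t\rangle^{-d}$.

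The moment part of \eqref{sec2:in3} then follows directly from the transport identity. The characteristic ODEs give the uniform drift bound
\[
    |X(t,x,v)-x|\leq \int_0^t s\|F(s)\|_{L^\infty_{x,v}}\,ds \lesssim \varepsilon^2\int_0^\infty s\langle s\rangle^{-d-1}\,ds \lesssim \varepsilon^2,
\]
(convergent already for $d\geq 2$), hence $\langle X(t,x,v)\rangle \leq (1+C\varepsilon^2)\langle x\rangle$. Since $\Phi_t$ is surjective -- one may simply solve the ODE backward from any target $(y,w)$ -- the transport identity $\gamma(\Phi_t(x,v),t)=\mu_0(x,v)$ yields
\[
    \|\langle\cdot\rangle^m\gamma(t)\|_{L^\infty_{x,v}} \leq (1+C\varepsilon^2)^m\|\langle\cdot\rangle^m\mu_0\|_{L^\infty_{x,v}} \leq (1+C'\varepsilon^2)\,\|\langle\cdot\rangle^m\mu_0\|_{L^\infty_{x,v}},
\]
which absorbs comfortably into the target $2\varepsilon$ bound for $\varepsilon$ small.

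The derivative part requires a bit more care. Differentiating the equation in $x_j$ or $v_j$ produces the inhomogeneous transport identity
\[
    (\partial_t + U\cdot\nabla_{x,v})\partial_{x_j}\gamma = (\partial_{x_j}F)\cdot(\nabla_v - t\nabla_x)\gamma,
\]
with the analogous formula involving $\partial_{v_j}F$ for $\partial_{v_j}\gamma$. Integrating along $\Phi_s$ and taking the supremum over initial points (again by surjectivity of $\Phi_t$) yields
\[
    \|\partial_{x_j}\gamma(t)\|_{L^\infty_{x,v}} \leq \varepsilon + \int_0^t \|\nabla_x F(s)\|_{L^\infty_{x,v}}\,(1+s)\,\|\nabla_{x,v}\gamma(s)\|_{L^\infty_{x,v}}\,ds,
\]
together with an analogous bound for $\|\partial_{v_j}\gamma(t)\|_{L^\infty_{x,v}}$ that uses $\|\nabla_v F(s)\|_{L^\infty_{x,v}}$ in place of $\|\nabla_x F(s)\|_{L^\infty_{x,v}}$. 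Inserting the bootstrap bound $\|\nabla_{x,v}\gamma\|_{L^\infty_{x,v}}\le 4\varepsilon$ and the decay of $\nabla_x F$ and $\nabla_v F$ derived above, the integrands are controlled by $\varepsilon^3\langle s\rangle^{-d}$ and $\varepsilon^3\langle s\rangle^{-d+1}$, respectively, and both integrate to $O(\varepsilon^3)$ precisely because $d\geq 3$. The main (mild) obstacle is this borderline integrability: at $d=3$ the $\partial_v$ estimate is just integrable, which is exactly what singles out the hypothesis $d\geq 3$ and explains why one needs the sharp $\langle t\rangle^{-d-1}$ decay of \eqref{in1.1:d3}--\eqref{in2:d3} rather than the weaker rate of \eqref{in1:d3}. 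Summing the moment and derivative bounds yields the improved estimate \eqref{sec2:in3} for $\varepsilon_0$ sufficiently small.
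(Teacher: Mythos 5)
Your proposal is correct and follows essentially the same route as the paper: feed the bootstrap bounds into Lemma \ref{sec2:lemma} to obtain decay of $\nabla_x\phi$ and $\partial_{x_k}\nabla_x\phi$, then propagate the spatial weight and the derivatives along the (divergence-free) transport, with the borderline $\langle s\rangle^{-d+1}$ integrand for $\partial_{v}\gamma$ being exactly where $d\geq 3$ enters. The only cosmetic differences are that you control the moment via the $O(\varepsilon^2)$ displacement of the characteristics instead of the paper's inductive estimate on $x_j^k\gamma$, and that you invoke the sharp rate \eqref{in1.1:d3} for $\nabla_x\phi$ itself where the weaker bound \eqref{in1:d3} already suffices (the sharp rate is only really needed for $\partial_{x_k}\nabla_x\phi$ via \eqref{in2:d3}).
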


\begin{proof}
Under the assumptions \eqref{sec2:in1} we observe that by Lemma \ref{sec2:lemma} we have
    \begin{equation}
    \label{sec2:in2}
            \|\nabla_x \phi(x,t)\|_{L^\infty_x(\R^d)} \lesssim \ip{t}^{-d} \varepsilon^2,
            \quad \quad
            \|\partial_{x_k} \nabla_x \phi(x,t)\|_{L^\infty_x(\R^d)} \lesssim \ip{t}^{-d-1} \varepsilon^2,\quad 1\leq k\leq d.
    \end{equation}
To prove \eqref{sec2:in3}, we compute from \eqref{eq:gamma} that
\begin{align*}
    \partial_t\{x_j \gamma\}&-\nabla_x \phi(x+tv,t)\cdot \{\nabla_v-t\nabla_x\}\{x_j\gamma\}=
    t\partial_{x_j}\phi(x+tv,t)\gamma,\\
    \partial_t \{\partial_{x_j}\gamma\} &- \nabla_x \phi(x+tv,t)\cdot \{\nabla_v-t\nabla_x\}\{\partial_{x_j}\gamma\} =
    \partial_{x_j}\nabla_x \phi(x+tv,t) \cdot \{\nabla_v -t\nabla_x\}\gamma,\\
    \partial_t \{\partial_{v_j}\gamma\} &- \nabla_x \phi(x+tv,t)\cdot \{\nabla_v-t\nabla_x\}\{\partial_{v_j}\gamma\}  =
    t\partial_{x_j}\nabla_x \phi(x+tv,t) \cdot \{\nabla_v -t\nabla_x\}\gamma.
\end{align*}
The left-hand side of the above equations is conservative, and in particular preserves all $L^p$ norms.\footnote{To put it differently, observe that the vector field
    $V(x,v,t):=(-t\nabla_x\phi(x+tv,t), \nabla_x\phi(x+tv,t))$ 
is divergence-free.} By \eqref{sec2:in1} and \eqref{sec2:in2} it follows that for $d\geq 3$
\begin{equation}
\label{sec2:est1}
\begin{aligned}
    \|x_j \gamma(t)\|_{L_{x,v}^\infty}&\le   \|x_j \gamma(0)\|_{L_{x,v}^\infty} + \int_0^t s\|\nabla_x\phi(s)\|_{L^\infty_x} \|\gamma(s)\|_{L^\infty_{x,v}}ds\\
    &\le \varepsilon + 4\varepsilon^3\int_0^t \frac{s}{\ip{s}^{d}}ds \\
    &\le \varepsilon + C\varepsilon^3.
\end{aligned}
\end{equation}
For $k\in\N$ we can reason inductively with
\begin{equation*}
    \partial_t\{x^k_j \gamma\} -\nabla_x\phi(x+tv,t)\cdot \{\nabla_v - t\nabla_x\}\{x_j^k\gamma\}=tk \partial_{x_j}\phi(x+tv,t) \{x_j^{k-1}\gamma\}
\end{equation*}
and with bounds as in \eqref{sec2:est1} to obtain the estimate for the higher moments in \eqref{sec2:in3}.

Analogously, thanks to \eqref{sec2:in2} we obtain uniform estimates for the gradients. We give the details for the more difficult case of the velocity gradient, since its equation contains an extra growing factor in time: 
by \eqref{sec2:in1} and
\eqref{sec2:in2} we have for $1\leq j\leq d$ that
\begin{align*}
\|\partial_{v_j}\gamma(t)\|_{L^\infty_{x,v}}&\le \|\partial_{v_j}\gamma(0)\|_{L^\infty_{x,v}} + \int_0^t s\|\partial_{x_j}\nabla_x\phi\|_{L^\infty_x}(\|\nabla_v \gamma\|_{L^\infty_{x,v}}+s \|\nabla_x \gamma\|_{L^\infty_{x,v}}) ds \\
&\le \eps + 8\int_0^t \frac{\varepsilon^3}{\ip{s}^{d-1}}ds\le 
\eps + C \varepsilon^3
\end{align*}
which concludes the proof.
\end{proof}

\begin{remark}\label{rem:sharpened3d}
 As the above proof shows, spatial moments alone suffice to close a global bootstrap. With additional regularity, by \eqref{in1.1:d3} one obtains sharp decay of $\nabla_x\phi$, and uniqueness in a strong topology.
\end{remark}

This bootstrap allows us to construct a global solution satisfying the bounds \eqref{sec2:in3}, for which we can now deduce the asymptotic behavior:
\begin{theorem}[Free scattering in $d\ge 3$]
\label{sec2:main"}
    Given an integer $d\ge 3$, there exists $\eps_0>0$ such that if for some $m>d$
    \begin{equation*}
    \|\ip{x}^m \mu_0\|_{L^\infty_{x,v}(\R^d \times \R^d)}+ \|\nabla_{x,v}\mu_0\|_{L^\infty_{x,v}(\R^d \times \R^d)} \leq \eps \le \varepsilon_0,
    \end{equation*}
    then there exists a unique solution $\gamma\in C_t(\R,W_{x,v}^{1,\infty}(\R^d\times\R^d))$ of \eqref{eq:gamma} with initial data $\gamma(0)=\mu_0$ and
    which verifies
    \begin{equation*}
        \|\ip{x}^m \gamma(t)\|_{L^\infty_{x,v}(\R^d \times \R^d)} +
        \|\nabla_{x,v}\gamma(t)\|_{L^\infty_{x,v}(\R^d \times \R^d)} 
        \lesssim \eps,\qquad \norm{\nabla_x\phi(t)}_{L^\infty}\lesssim \ip{t}^{-d-1}\eps^2.
    \end{equation*}
    In particular $\mu(x,v,t)=\gamma(x-tv,v,t)$ exhibits free scattering: there exists an asymptotic distribution function $\gamma_\infty \in L^\infty_{x,v}$ such that
    \begin{equation}
    \label{eq:frscat3}
        \lim_{t \to \infty} \| \mu(x+tv,v,t) - \gamma_\infty(x,v)\|_{L^\infty_{x,v}}= 0.
    \end{equation}
\end{theorem}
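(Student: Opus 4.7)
The plan is to first establish global existence of the profile $\gamma$ satisfying \eqref{eq:gamma} by combining standard local well-posedness with a continuity argument based on Lemma \ref{sec2:bootstrap}, then to upgrade the field decay to the sharp rate $\ip{t}^{-d-1}$ via \eqref{in1.1:d3} of Lemma \ref{sec2:lemma}, and finally to extract the asymptotic profile $\gamma_\infty$ via a Cauchy-in-time argument for $\gamma$ in $L^\infty_{x,v}$.

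For global existence, local well-posedness of \eqref{eq:gamma} in $C_t([0,T^*), W^{1,\infty}_{x,v}(\R^d\times\R^d))$ with propagation of $\|\ip{x}^m\gamma\|_{L^\infty_{x,v}}$ is standard: the transport vector field $V(x,v,t) = (-t\nabla_x\phi(x+tv,t),\nabla_x\phi(x+tv,t))$ is divergence-free and Lipschitz whenever $\nabla_x\phi$ and its $x$-gradient are bounded, which is ensured by the $L^\infty$-boundedness of $\rho\mapsto G_d\ast\rho$ together with control of $\rho$ and its derivatives by $\gamma\in W^{1,\infty}_{x,v}$ with spatial moments. Under the smallness hypothesis on $\mu_0$, the bootstrap assumption \eqref{sec2:in1} holds at $t=0$ with ample margin; by continuity it persists on a relatively closed subinterval of $[0,T^*)$, and Lemma \ref{sec2:bootstrap} upgrades the constant from $4\eps$ to $2\eps$, making this subinterval also open, hence the entire existence interval. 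A standard blow-up criterion in $\|\ip{x}^m\gamma\|_{L^\infty} + \|\nabla_{x,v}\gamma\|_{L^\infty}$ then yields $T^* = +\infty$ together with the claimed uniform bound. Uniqueness in this class follows from a Grönwall argument on the difference of two solutions in $L^\infty_{x,v}$, using that $\|\nabla_x\phi_1(t) - \nabla_x\phi_2(t)\|_{L^\infty}\lesssim \|\gamma_1(t)-\gamma_2(t)\|_{L^\infty_{x,v}}$ via Young's inequality and the weighted bounds on the $\gamma_i$. Negative times are handled symmetrically, the analysis being invariant under $t\mapsto -t$.

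With the global bounds $\|\ip{x}^m\gamma(t)\|_{L^\infty_{x,v}} + \|\nabla_{x,v}\gamma(t)\|_{L^\infty_{x,v}}\lesssim \eps$ in hand, estimate \eqref{in1.1:d3} of Lemma \ref{sec2:lemma} immediately provides
\[
\|\nabla_x\phi(t)\|_{L^\infty_x}\lesssim \ip{t}^{-d-1}\|\ip{x}^m\gamma(t)\|_{L^\infty_{x,v}}\|\nabla_v\gamma(t)\|_{L^\infty_{x,v}}\lesssim \eps^2\ip{t}^{-d-1}.
\]
Plugging this back into \eqref{eq:gamma} and using the bootstrap bounds for $\nabla_{x,v}\gamma$ yields
\[
\|\partial_t\gamma(t)\|_{L^\infty_{x,v}}\leq \|\nabla_x\phi(t)\|_{L^\infty_x}\bigl(\|\nabla_v\gamma(t)\|_{L^\infty_{x,v}} + t\|\nabla_x\gamma(t)\|_{L^\infty_{x,v}}\bigr)\lesssim \eps^3 \ip{t}^{-d}.
\]
Since $d\geq 3$, the right-hand side is integrable on $[0,\infty)$, so that for any $t_2>t_1\geq 0$
\[
\|\gamma(t_2) - \gamma(t_1)\|_{L^\infty_{x,v}}\leq \int_{t_1}^{t_2}\|\partial_t\gamma(s)\|_{L^\infty_{x,v}}\,ds \lesssim \eps^3\int_{t_1}^\infty \ip{s}^{-d}\,ds \longrightarrow 0 \quad\text{as } t_1\to\infty.
\]
Thus $\{\gamma(t)\}_{t\geq 0}$ is Cauchy in $L^\infty_{x,v}$ and converges to a limit $\gamma_\infty\in L^\infty_{x,v}$; since $\gamma(x,v,t) = \mu(x+tv,v,t)$, this is precisely \eqref{eq:frscat3}.

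Rather than a genuine obstacle, the key subtlety is the separation of scales just used: the bootstrap of Lemma \ref{sec2:bootstrap} closes using \eqref{in1:d3} and \eqref{in2:d3}, but the sharpened estimate \eqref{in1.1:d3} for $\nabla_x\phi$ itself—which leads to the time-integrable bound $\|\partial_t\gamma(t)\|_{L^\infty_{x,v}}\lesssim \eps^3\ip{t}^{-d}$ that drives scattering—becomes available only a posteriori, once $\nabla_v\gamma$ has been placed in $L^\infty$ by the bootstrap. This comfortable margin is exactly what fails for $d=2$ and $d=1$, where the more delicate analyses of Sections \ref{sec3} and \ref{sec:1d} become unavoidable.
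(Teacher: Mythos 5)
Your proposal is correct and follows essentially the same route as the paper: global existence via classical local well-posedness combined with the continuity/bootstrap argument of Lemma \ref{sec2:bootstrap}, then time-integrability of $\partial_t\gamma$ in $L^\infty_{x,v}$ to get a Cauchy-in-time limit $\gamma_\infty$. The only cosmetic difference is that you feed the sharp decay \eqref{in1.1:d3} (available once $\nabla_v\gamma\in L^\infty$ is propagated, as in Remark \ref{rem:sharpened3d}) into the scattering step to get $\|\partial_t\gamma\|_{L^\infty_{x,v}}\lesssim\eps^3\ip{t}^{-d}$, whereas the paper uses the cruder bound \eqref{sec2:in2} giving $\eps^3\ip{t}^{-(d-1)}$, which is already integrable for $d\ge 3$.
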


\begin{proof}
   The existence of a unique global solution $\gamma$ of \eqref{eq:gamma} with the claimed properties follows from classical well-posedness theory and the bootstrap in Lemma \ref{sec2:bootstrap}.
    It remains to show the free scattering property \eqref{eq:frscat3}: Since the decay estimate \eqref{sec2:in2} holds, it follows directly from the equation \eqref{eq:gamma} that
    \begin{equation*}
        \|\partial_t \gamma\|_{L^\infty_{x,v}}\le \|\nabla_x \phi\|_{L^\infty_x} (\|\nabla_v \gamma\|_{L^\infty_{x,v}} + t \|\nabla_x \gamma\|_{L^\infty_{x,v}})\lesssim \frac{\eps^3}{\ip{t}^{d-1}}.
    \end{equation*}
    It follows that
    $\|\gamma(t)\|_{L^\infty_{x,v}}$ is Cauchy-continuous in time, and since $d\ge 3$ there exists $\gamma_\infty\in L^\infty_{x,v}$ such that $\gamma(t)$ converges uniformly to $\gamma_\infty$ as $t\to\infty$. Translated to the variable $\mu(x,v,t)=\gamma(x-tv,v,t)$, this proves \eqref{eq:frscat3}.
\end{proof}

\section{Free scattering in $d=2$}
\label{sec3}
In this section we establish Theorem \ref{intro:thm1} for $d=2$. The arguments proceed along broadly similar lines as in case $d\geq 3$, beginning with an analysis of linear decay in Section \ref{sec:2d_lin}, a nonlinear bootstrap in Section \ref{sec:2d_bootstrap} and finally the demonstration of free scattering of solutions in Section \ref{sec:2dfree_scat}, but require significant refinements. As discussed in the introduction, we rely on a combination of higher energy norms that are allowed to grow slowly in time with a lower order $Z$-norm that captures the sharp decay behavior and remains uniformly bounded.

We begin by recalling from \eqref{znorm_intro} the definition of the relevant norm
\begin{equation}
\label{znorm}
    \norm{h}_Z:= \norm{h}_{L^\infty_v L^2_x}.
\end{equation}
As will be clear below, for certain error terms the following auxiliary norm will be useful:
\begin{equation}
\label{zprime}
    \norm{h}_{Z'}
    :=
    \norm{h}_{Z} + \ip{t}^{-\frac14}\left(\norm{h}_{L^2_x H^2_v} + \norm{xh}_{L^2_{x,v}}\right).
\end{equation}
Its key feature is that
\begin{equation}\label{sec3:PL}
    \Big\| h\Big(a,\frac{x-a}{t}\Big) - h\Big(a, \frac{x}{t}\Big) \Big\|_{L^2_aL^\infty_x } \lesssim t^{-\frac{1}{3}} 
     \left(\|h\|_{L^2_x H^2_v}+ \|x h\|_{L^2_{x,v}} \right)\lesssim \norm{h}_{Z'},
\end{equation}
which can be shown using a standard Littlewood-Paley decomposition in velocity, which (to fix notation) we write as
\begin{equation}\label{sec3:defLP}
 h=\sum_{M \in 2^{\Z}} h_M,\quad h_M(x,v):=\mathcal{F}^{-1}_{\eta \to v}\left[\psi\left(M^{-1}\eta\right) \mathcal{F}_{v \to \eta}h\right](x,v),
\end{equation}
for $h:\R^2\times\R^2\to\R$ -- see Appendix \ref{sec:appdx} for the details.

\subsection{Linear analysis}\label{sec:2d_lin}
Recall the notations \eqref{sec2:greenconv}.
\begin{proposition}[Linear Decay in $d=2$]
\label{prop:decay2}
We have that
\begin{equation}\label{eq:rho_bound}
    \norm{\rho_h(x,t)}_{L^\infty_x(\R^2)}\lesssim \ip{t}^{-2}\norm{h}^2_{Z'}.
 \end{equation}
 Moreover, for any $0<\delta<1$ and $|\alpha|\ge 0$ there holds that
 \begin{align}
 \norm{\partial^\alpha_x \nabla_x\phi_h(x,t)}_{L^\infty_x(\R^2)}&\lesssim_{\delta} \ip{t}^{-2-|\alpha|+\delta}\norm{h}_{L^2_xH^{|\alpha|+1}_v}^2,\label{in:lin3}\\
 \norm{\partial^\alpha_x \nabla_x\phi_h(x,t)}_{L^\infty_x(\R^2)}&\lesssim_{\delta} \ip{t}^{-3-|\alpha|+\delta}\norm{h}_{L^2_xH^{|\alpha|+2}_v}^2.\label{in:lin2}
  \end{align}
 Furthermore, for $|\alpha| \ge 2$ we have
\begin{equation}\label{in:lin5}
    \norm{\partial^\alpha_x \nabla_x\phi_h(x,t)}_{L^2_x} \lesssim \ip{t}^{-\frac12-|\alpha| }\|h\|^2_{L^2_x H^{|\alpha|}_v}.
\end{equation}
\end{proposition}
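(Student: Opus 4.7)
The plan is to prove the four estimates in turn, using the dispersive change of variables $v=(x-a)/t$, Bessel kernel properties specific to dimension two, and Plancherel's theorem for the $L^2$-estimate.

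For \eqref{eq:rho_bound}, I change variables to get $\rho_h(x,t)=t^{-2}\int h^2(a,(x-a)/t)\,da$, then split $h(a,(x-a)/t)=h(a,x/t)+R(a,x)$ with $R:=h(a,(x-a)/t)-h(a,x/t)$ and expand the square. The leading piece equals $t^{-2}\|h(\cdot,x/t)\|_{L^2_a}^2$, whose $L^\infty_x$-norm is exactly $t^{-2}\|h\|_Z^2$. The cross and remainder terms are handled by Cauchy--Schwarz in $a$, the trivial comparison $\|\cdot\|_{L^\infty_x L^2_a}\le \|\cdot\|_{L^2_a L^\infty_x}$, and the PL-type estimate \eqref{sec3:PL}. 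Since $t^{-1/3}\le \langle t\rangle^{-1/4}$ for $t\ge 1$, the combined expression is dominated by $\|h\|_{Z'}^2$.

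For \eqref{in:lin3} and \eqref{in:lin2}, the key two-dimensional feature is that $\nabla G_2\in L^1(\R^2)$ (the $1/|x|$-singularity at the origin being integrable in 2D, while $\partial^2 G_2$ is only of Calder\'on--Zygmund type). Thus $\|\nabla_x\phi_h\|_{L^\infty_x}\le \|\nabla G_2\|_{L^1}\|\rho_h\|_{L^\infty_x}$, and for higher $x$-derivatives I write $\partial^\alpha_x\nabla_x\phi_h=\nabla G_2\ast \partial^\alpha_x\rho_h$ and reduce to estimating $\|\partial^\alpha_x\rho_h\|_{L^\infty_x}$. Each $\partial_x$ on $\rho_h$ becomes $t^{-1}\partial_v$ on $h^2$ via the change of variables; applying Leibniz, Cauchy--Schwarz in $a$, and the 2D Sobolev embedding $L^\infty_v\hookleftarrow H^{1+\delta'}_v$ gives a sharp-in-time bound at the cost of an extra $\delta'$ in $v$-regularity. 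To convert this to the stated form with exactly $H^{|\alpha|+1}_v$-regularity and a $\langle t\rangle^\delta$-loss in decay, I would interpolate in $v$-regularity via a Littlewood--Paley decomposition, balancing Bernstein-based bounds on low $v$-frequencies against the higher-regularity sharp bound on high $v$-frequencies. The improvement \eqref{in:lin2} follows after one additional integration by parts as in \eqref{eq:nablaphiibp}, which converts one $\nabla_x$ on $G_2$ into $t^{-1}\nabla_v$ on $h^2$, gaining one extra $t^{-1}$ at the cost of one extra $v$-derivative.

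For \eqref{in:lin5}, Plancherel cleanly handles the $L^2_x$-estimate. The Fourier multiplier $i\xi\,\xi^\alpha/(1+|\xi|^2)$ of $\partial^\alpha_x\nabla_x(1-\Delta)^{-1}$ is dominated by $C|\xi|^{|\alpha|-1}$ for $|\alpha|\ge 2$, reducing the estimate to $\|\rho_h\|_{H^{|\alpha|-1}_x}$. Using $\hat\rho_h(\xi)=\int e^{-ia\xi}(K\ast K)(a,t\xi)\,da$ with $K:=\mathcal{F}_v h$, the change of variables $\eta=t\xi$ (with $d\xi=t^{-2}d\eta$) extracts a factor $t^{-1}$ in $L^2_\xi$; combining with the derivative weight $|\xi|^{|\alpha|-1}$ and Young-type convolution bounds in $\eta$ yields the claimed $\langle t\rangle^{-1/2-|\alpha|}$ decay. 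The main obstacle I anticipate is the interpolation for \eqref{in:lin3} and \eqref{in:lin2}: the failure of the endpoint two-dimensional Sobolev embedding $H^1(\R^2)\not\hookrightarrow L^\infty(\R^2)$ forces either a $\delta'$-loss in regularity or (as stated) a $\langle t\rangle^\delta$-loss in decay, and executing this trade-off sharply, with explicit control on the $\delta$-dependence of the constant, is the technically subtle part.
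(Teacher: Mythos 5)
Your argument for \eqref{eq:rho_bound} is correct and essentially the paper's (same change of variables, same splitting, same use of \eqref{sec3:PL}). The problems are in the remaining three estimates, and they share a common root: you reduce to norms of $\rho_h$ alone before the smoothing of $(1-\Delta)^{-1}$ has done its work. For \eqref{in:lin3}--\eqref{in:lin2}, the step $\norm{\partial_x^\alpha\nabla_x\phi_h}_{L^\infty_x}\le\norm{\nabla G_2}_{L^1}\norm{\partial_x^\alpha\rho_h}_{L^\infty_x}$ forces you to prove the intermediate inequality $\norm{\partial_x^\alpha\rho_h}_{L^\infty_x}\lesssim_\delta \ip{t}^{-2-\abs{\alpha}+\delta}\norm{h}^2_{L^2_xH^{\abs{\alpha}+1}_v}$, and this is false already for $\alpha=0$: take $h(x,v)=g(x)\phi(v)$ with $\norm{g}_{L^2}=1$ and $g^2$ an approximate identity at a scale much smaller than $t$; then $t^2\rho_h(0,t)\approx\phi(0)^2$, while the right-hand side is $\ip{t}^{\delta}\norm{\phi}^2_{H^1_v}$, and since $H^1(\R^2)\not\hookrightarrow L^\infty(\R^2)$ the ratio $\phi(0)^2/\norm{\phi}^2_{H^1}$ is unbounded at fixed $t$. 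This also shows why your proposed Littlewood--Paley ``balancing'' cannot rescue it: once you have taken $L^\infty_x$ of the density, every $v$-frequency block carries the same prefactor $t^{-2-\abs{\alpha}}$, so there is no $t$-dependent competing bound to balance against, and summing the high-frequency blocks needs $B^{\abs{\alpha}+1}_{2,1}$- or $H^{\abs{\alpha}+1+\delta'}_v$-type control rather than $H^{\abs{\alpha}+1}_v$. In the paper the $\ip{t}^\delta$ loss is generated at the kernel level: H\"older between $\nabla_z\{\chi(R^{-1/2}z)\}\in L^{q'}_z$ and the composed factor in $L^q_z$, the rescaling $z\mapsto v$ producing exactly $t^{2/q}=t^\delta$ while $H^1_v\hookrightarrow L^q_v$ holds for every finite $q$. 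Note also that your fallback (sharp decay at the price of $H^{\abs{\alpha}+1+\delta'}_v$) would not feed into the bootstrap, which invokes \eqref{in:lin3} with $\abs{\alpha}=2$ at the top controlled regularity $H^3_v$.

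The same issue defeats your route to \eqref{in:lin5}. After bounding the multiplier by $\abs{\xi}^{\abs{\alpha}-1}$ you are estimating $\norm{\rho_h}_{\dot H^{\abs{\alpha}-1}_x}$, and that quantity genuinely decays only like $t^{-\abs{\alpha}}$: for separable $h=g(x)\phi(v)$ with $\widehat{g^2}$ bounded below on $\abs{\xi}\le c$ one gets $\norm{\rho_h}_{\dot H^{\abs{\alpha}-1}_x}\gtrsim t^{-\abs{\alpha}}\norm{h}^2_{L^2_xH^{\abs{\alpha}}_v}$, so the needed $t^{-\frac12-\abs{\alpha}}$ cannot follow. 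Your own power count confirms this: $t^{-1}$ from the measure in $\eta=t\xi$ plus $t^{-(\abs{\alpha}-1)}$ from transferring the weight gives $t^{-\abs{\alpha}}$, half a power short, and the appeal to ``Young-type convolution bounds'' does not produce the missing factor. The repair within your Plancherel framework is to keep the full multiplier $\xi^\alpha\xi_k/(1+\abs{\xi}^2)$ and use its extra smallness for $\abs{\xi}\lesssim 1$: splitting $\abs{\xi}\le 1$ and $\abs{\xi}\ge 1$, transferring $\abs{t\xi}$-weights, bounding $\abs{\widehat{\rho_h}(\xi)}\le\norm{\mathcal{F}_v(h^2)(\cdot,t\xi)}_{L^1_x}$ and using that $H^{\abs{\alpha}}_v\hookrightarrow L^\infty_v$ for $\abs{\alpha}\ge 2$ controls $\norm{h^2}_{\dot H^{\abs{\alpha}}_v}$, one in fact obtains $\ip{t}^{-1-\abs{\alpha}}\norm{h}^2_{L^2_xH^{\abs{\alpha}}_v}$ for $t\ge1$, which is even stronger than \eqref{in:lin5}; the paper instead reaches $t^{-\frac12-\abs{\alpha}}$ by interpolating $L^1_x$ and $L^\infty_x$ bounds on the kernel-level quantities $\partial_x^\alpha\Phi_R$. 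As written, however, your proofs of \eqref{in:lin3}, \eqref{in:lin2} and \eqref{in:lin5} all contain genuine gaps.
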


The proof proceeds similarly as in \cite[Section 2]{IPWW2020}.
\begin{proof}
The change of variables $w\mapsto x-tw=:a$ yields 
\begin{equation}
\label{prop1:est1}
\begin{aligned}
\rho_h(x,t)
&= \int_{\R^2}h^2(x-tw, w) dw
=t^{-2}\int_{\R^2}h^2\Big(a, \frac{x-a}{t}\Big)da\\
&
=t^{-2}\int_{\R^2}h^2\Big(a, \frac{x}{t}\Big) da + J(x,t),
\end{aligned}
\end{equation}
where
\begin{equation*}
    J(x,t):=t^{-2}\int_{\R^2}
    \Big\{ 
    h^2\Big(a, \frac{x-a}{t}\Big) - 
    h^2\Big(a, \frac{x}{t}\Big) 
    \Big\} da.
\end{equation*}
By construction, the leading order term in \eqref{prop1:est1} can be bounded using the $Z$-norm in \eqref{znorm}
\begin{equation*}
\int_{\R^2}h^2\Big(a, \frac{x}{t}\Big) da 
\le \norm{h(t)}_Z^2,
\end{equation*}
while the remainder $J(x,t)$ satisfies by \eqref{sec3:PL} that
\begin{align*}\label{sec3:J}
    |J(x,t)| 
    &\lesssim t^{-2}\|h\|_{L^2_xL^\infty_v }
    \Big\|h\Big(a,\frac{x-a}{t}\Big) - h\Big(a, \frac{x}{t}\Big)\Big\|_{L^2_aL^\infty_x }\lesssim t^{-2-\frac{1}{3}}\left(\|h\|^2_{L^2_x H^2_v}+ \|h\|_Z\|x h\|_{L^2_{x,v}}\right),
\end{align*}
since $\|h\|_{L^2_x L^\infty_v(\R^2)}\le \|h\|_{L^2_x H^2_v(\R^2)}$ 
by the Sobolev embedding. This gives \eqref{eq:rho_bound}.

To bound the force field and its derivatives, we use the Green function \eqref{green} of the screened Laplacian to write
\begin{equation}
\label{eq:nablaPhi}
\begin{aligned}
\nabla_x\phi_h (x,t)&=\nabla_x\iint_{y,w\in\R^2}G_2(x-y)h^2(y-tw,w)dydw\\
&= \frac{1}{4\pi}\int_0^\infty \Phi_R(x,t) e^{-R}\frac{dR}{R},
\end{aligned}
\end{equation}
where
\begin{equation*}
\Phi_R(x,t):=\iint_{\R^2_{y,w}}
\nabla_x \Big\{\chi\Big(R^{-\frac{1}{2}}(x-y)\Big)\Big\}h^2(y-tw,w)
dy dw.
\end{equation*}
Given a multi-index $\alpha\in\N_0^2$, we change variables as in \eqref{prop1:est1} and integrate by parts to obtain that
\begin{equation}
\label{sec3:phiR}
    \partial^\alpha_x\Phi_R(x,t)=
    t^{-2-|\alpha|}
     \sum_{\substack{\alpha_1+\alpha_2=\alpha \\ |\alpha_1|\le |\alpha_2|}}c_{\alpha_1, \alpha_2} \Phi_R^{\alpha_1, \alpha_2}(x,t),
     \end{equation}
     where
\begin{equation}\label{eq:Phi_loc}
     \Phi_R^{\alpha_1, \alpha_2}(x,t):=
    \iint_{\R^2_{z,a}}\nabla_z
    \Big\{\chi\Big(R^{-\frac{1}{2}}z\Big)\Big\}
    \partial_v^{\alpha_1}h\Big(a, \frac{x-z-a}{t}\Big) 
    \partial^{\alpha_2}_v h\Big(a,\frac{x-z-a}{t}\Big) dz da.
\end{equation}
We next establish \eqref{in:lin3}. If $|\alpha|=0$ we have by Hölder's inequality for $2<q<\infty$ that
\begin{equation*}
\Big|\Phi^{0, 0}_R(x,t)\Big|\le\Big\|\nabla_z\Big\{\chi\Big(R^{-\frac{1}{2}}z\Big)\Big\}\Big\|_{L^{q'}_z} \Big\|h\Big(a, \frac{x-z-a}{t}\Big)\Big\|^2_{L^2_aL^q_z},
\end{equation*}
and similarly if $|\alpha|>0$ we get
\begin{equation}
\label{sec3:fieldest}
\begin{aligned}
\Big|\Phi^{\alpha_1, \alpha_2}_R&(x,t)\Big|
 \le
\Big\|\nabla_z
\Big\{\chi\Big(R^{-\frac{1}{2}}z\Big)\Big\}\Big\|_{L^{q'}_z} 
\Big\|\partial^{\alpha_2}_v 
h\Big(a, \frac{x-z-a}{t}\Big)\Big\|_{L^2_aL^q_z } 
\Big\|
\partial_v^{\alpha_1}h\Big(a, \frac{x-z-a}{t}\Big)
\Big\|_{L^2_a L^\infty_z}.
\end{aligned}
\end{equation}
We thus obtain that
\begin{equation*}
\left|\Phi^{\alpha_1, \alpha_2}_R(x,t)\right|\lesssim_q t^{\frac{2}{q}}R^{\frac12-\frac{1}{q}} \|h\|^2_{L^2_x H^{|\alpha|+1}_v},
\end{equation*}
where we also used that by Sobolev embedding
and since $|\alpha_1|<|\alpha|$ there holds that
\begin{equation*}
\left\|\partial^{\alpha_2}_v 
h\right\|_{L^2_xL^q_v } 
\lesssim_q \|h\|_{L^2_xH^{|\alpha_2|+1}_v},
\qquad
\left\|\partial^{\alpha_1}_v 
h\right\|_{L^2_x L^\infty_v} \lesssim \|h\|_{L^2_xH^{|\alpha_1|+2}_v}\le \|h\|_{L^2_x H^{|\alpha|+1}_v}.
\end{equation*}
Recalling the expression in \eqref{eq:nablaPhi}, 
this gives the claimed bound
\begin{align*}
\norm{\partial_x^\alpha\nabla_x \phi_h(x,t)}_{L^\infty_x}&\lesssim_q t^{-2-|\alpha|+\frac{2}{q}}\|h\|^2_{L^2_x H^{|\alpha|+1}_v}\int_0^\infty R^{-\frac12-\frac{1}{q}}e^{-R}dR\lesssim_q  t^{-2-|\alpha|+\frac{2}{q}}\|h\|^2_{L^2_x H^{|\alpha|+1}_v}.
\end{align*}
The bound \eqref{in:lin2} follows as in \eqref{sec3:fieldest} upon also integrating by parts the gradient on $\chi(R^{-\frac{1}{2}}z)$ in \eqref{eq:Phi_loc}.

Finally, the $L^2$ estimate \eqref{in:lin5} is obtained
by interpolation. On the one hand, we bound the $L^1$ norm as
\begin{equation*}
\| \Phi^{\alpha_1,\alpha_2}_R(x,t)\|_{L^1_x}
\le\Big \| \nabla_z \{ \chi(R^{-\frac12}z)\}\Big \|_{L^1_z} 
\Big\| \partial^{\alpha_2}_v h\Big(a, \frac{x-z-a}{t}\Big) \Big\|_{L^2_aL^2_z}
\Big\|\partial_v^{\alpha_1}h\Big(a, \frac{x-z-a}{t}\Big)\Big\|_{L^2_aL^2_z},
\end{equation*}
Since $|\alpha_1|\leq |\alpha_2|\le |\alpha|$ we get that
\begin{equation*}
\| \Phi^{\alpha_1,\alpha_2}_R(x,t)\|_{L^1_x}
\lesssim t^2 R^{\frac12} \|h\|^2_{L^2_x H_v^{|\alpha|}} 
\end{equation*}
so that by \eqref{sec3:phiR}
\begin{equation}
\label{sec3:interL1}
 \left\|\partial_x^\alpha \Phi_R(x,t)\right\|_{L^1_x} \lesssim t^{-|\alpha|}R^{\frac12} \|h\|^2_{L^2_x H_v^{|\alpha|}}.
\end{equation}
On the other hand, assuming that $|\alpha|\ge 2$ we have by Hölder's inequality with $\frac{1}{p_i}:=\frac{|\alpha_i|}{2|\alpha|}$, $i=1,2$, that
\begin{align*}
\Bigl \|&\Phi^{\alpha_1, \alpha_2}_R(x,t)\Bigr\|_{L^\infty_x}
&\le \norm{\nabla_z\Big\{\chi\Big(R^{-\frac{1}{2}}z\Big)\Big\}}_{L^{2}_z} 
\Big\|\partial^{\alpha_1}_vh\Big(a, \frac{x-z-a}{t}\Big)\Big\|_{ L^2_aL^{p_1}_z}
\Big\|\partial^{\alpha_2}_vh\Big(a, \frac{x-z-a}{t}\Big)\Big\|_{ L^2_aL^{p_2}_z},
\end{align*}
which yields
\begin{equation*}
    \norm{\Phi^{\alpha_1, \alpha_2}_R(x,t)}_{L^\infty_x}  
    \lesssim t \|\partial_v^{\alpha_1}h\|_{L^2_x L^{p_1}_{v}}
     \|\partial_v^{\alpha_2}h\|_{L^2_x L^{p_2}_{v}}\lesssim t \|h\|^2_{L^2_x H^{|\alpha|}_v}
\end{equation*}
by the Sobolev embedding. From \eqref{sec3:phiR} we conclude that
\begin{equation}
\label{sec3:interLinf}
 \norm{\partial_x^\alpha \Phi_R(x,t)}_{L^\infty_x}\lesssim t^{-1-\abs{\alpha}}\|h\|^2_{L^2_x H^{|\alpha|}_v}.  
\end{equation}
Interpolating the estimates \eqref{sec3:interL1} and \eqref{sec3:interLinf} shows that
\begin{equation*}
\|\partial_x^\alpha \Phi_R(x,t) \|_{L^2_x}
\le
\|\partial_x^\alpha \Phi_R(x,t) \|^{\frac12}_{L^\infty_x}
\|\partial_x^\alpha \Phi_R(x,t) \|^{\frac12}_{L^1_x}
\lesssim t^{-|\alpha|-\frac12}R^{\frac14} \|h\|^2_{L^2_x H_v^{|\alpha|}},
\end{equation*}
from which integration over $R$ as above yields \eqref{in:lin5}.
\end{proof}

\subsection{Nonlinear bootstrap}\label{sec:2d_bootstrap}
We recall from \eqref{eq:assump_2d} that we shall assume that for $\eps>0$ to be determined, the initial datum for $\eqref{eq:gamma}$ verifies
\begin{equation}
\label{sec3:initialdat}
    \|x \gamma_0\|_{L^2_{x,v}} +\norm{\gamma_0}_Z+\norm{\nabla_x\gamma_0}_Z+\|\gamma_0\|_{H^3_{x,v}} \le \eps.
\end{equation}

\begin{proposition}[Bootstrap estimates in $d=2$]\label{sec3:boot}
There exists $\eps_0>0$ such that if $\gamma$ is a solution to \eqref{eq:gamma} on $[0,T]$ satisfying for $0<\eps<\eps_0$ the initial data bounds \eqref{sec3:initialdat} and the bootstrap hypotheses
\begin{equation}
\label{boot:hyp}
\begin{aligned}
\|\gamma\|_Z + \|\nabla_x\gamma\|_Z +\|\gamma \|_{L^2_v H^3_x} + \|x \gamma \|_{L^2_{x,v}}+
\|\gamma\|_{L^2_x H^3_v}\le \varepsilon \ip{t}^\delta,
\end{aligned}
\end{equation}
for some $0<\delta\ll 1$, then the following improved estimates hold for universal constants $c_1,c_2>0$:
\begin{align}
\|\gamma\|_Z + \|\nabla_x \gamma\|_Z&\le  \varepsilon + c_1 \varepsilon^3, \label{boot:in1}\\
\|x\gamma\|_{L^2_{x,v}}
+\|\gamma\|_{L^2_v H^3_x}
+\|\gamma\|_{H^2_x H^1_v}
&\le \varepsilon + c_1 \varepsilon^3, \label{boot:in2}  \\
\|\gamma\|_{L^2_x H^2_v}&\le \varepsilon \ip{t}^{c_1\varepsilon^2}, \label{boot:in3}\\
\|\gamma\|_{L^2_x H^3_v}&\le c_2\varepsilon \ip{t}^{c_2\varepsilon^2}. \label{boot:in4}
\end{align}
\end{proposition}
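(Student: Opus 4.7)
The unifying observation is that the transport vector field on the left-hand side of \eqref{eq:gamma}, namely $V(x,v,t):=(-t\nabla_x\phi(x+tv,t),\nabla_x\phi(x+tv,t))$, is divergence-free, so composition with its flow preserves every $L^p_{x,v}$ norm. Accordingly, for each quantity appearing in \eqref{boot:in1}--\eqref{boot:in4} my plan is to apply the appropriate multiplication/differential operator to \eqref{eq:gamma}, read off the resulting source on the right-hand side, and close the bound by Duhamel or an energy/Grönwall argument, using Proposition \ref{prop:decay2} together with the bootstrap \eqref{boot:hyp} to control the field. The crucial leverage throughout is the gain $\ip{t}^{-|\beta|}$ in decay afforded by $|\beta|$ additional $x$-derivatives of $\phi$.

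For the $Z$-norm bounds \eqref{boot:in1}, I would integrate \eqref{eq:gamma} in time at fixed $v$ in $L^2_x$ and then take $\sup_v$, giving
\begin{equation*}
\|\gamma(t)\|_Z \le \|\gamma_0\|_Z + \int_0^t \|\nabla_x\phi(s)\|_{L^\infty_x}\bigl(\|\nabla_v\gamma(s)\|_Z + s\|\nabla_x\gamma(s)\|_Z\bigr)\,ds,
\end{equation*}
and analogously for $\nabla_x\gamma$ (with $\nabla_x\phi$ replaced by $\nabla_x^2\phi$, which decays one order faster). Estimate \eqref{in:lin2} with $|\alpha|=0$ gives $\|\nabla_x\phi(s)\|_{L^\infty_x}\lesssim \varepsilon^2\ip{s}^{-3+3\delta}$, while the two-dimensional Sobolev embedding $H^2_v\hookrightarrow L^\infty_v$ and the bootstrap \eqref{boot:hyp} yield $\|\nabla_v\gamma(s)\|_Z\lesssim \|\gamma(s)\|_{L^2_xH^3_v}\lesssim\varepsilon\ip{s}^\delta$; the resulting integrand $\sim\varepsilon^3 s\ip{s}^{-3+4\delta}$ is absolutely convergent for small $\delta$, closing \eqref{boot:in1}. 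The bounds \eqref{boot:in2} are handled analogously by energy estimates in $L^2_{x,v}$ for the equations satisfied by $x_j\gamma$, $\partial_x^\alpha\gamma$ with $|\alpha|\le 3$, and $\partial_x^\alpha\partial_{v_j}\gamma$ with $|\alpha|\le 2$; the resulting sources are sums of multilinear terms involving either $\partial_x^\beta\nabla_x\phi$ with $|\beta|\ge 1$ or $\partial_x\phi$ multiplied by a single power of $t$. Invoking \eqref{in:lin3}, \eqref{in:lin2}, \eqref{in:lin5} and the bootstrap, the associated integrals converge absolutely and yield the uniform bound $\varepsilon+c_1\varepsilon^3$.

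The main difficulty lies in \eqref{boot:in3}--\eqref{boot:in4}, since each $\partial_v$ applied to $\nabla_x\phi(x+tv,t)$ produces a growing factor: $\partial_v^\beta[\nabla_x\phi(x+tv,t)]=t^{|\beta|}\partial_x^\beta\nabla_x\phi(x+tv,t)$. The equation for $\partial_v^\alpha\gamma$ thus reads
\begin{equation*}
\partial_t\partial_v^\alpha\gamma - V\cdot\nabla_{x,v}\partial_v^\alpha\gamma = \sum_{\substack{\beta+\nu=\alpha\\ |\beta|\ge 1}}\binom{\alpha}{\beta}\,t^{|\beta|}\,\partial_x^\beta\nabla_x\phi(x+tv,t)\cdot\{\nabla_v-t\nabla_x\}\partial_v^\nu\gamma,
\end{equation*}
whose $L^2_{x,v}$ pairing with $\partial_v^\alpha\gamma$ involves multilinear expressions whose prefactor in $t$ can be as large as $t^{|\alpha|+1}$. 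To handle these I would follow the strategy illustrated in \eqref{eq:sample}: expand $\nabla_x\phi = \nabla_x G_2\ast\rho_\gamma$, recast every copy of $\gamma$ in the dispersive variables $v\mapsto x-tv$ and $w\mapsto y-tw$ (each such change of variables contributing a gain of $t^{-2}$), and then integrate by parts in $y$ on the Green's function to transfer $x$-derivatives off the field onto the inner $\gamma$-pair (each such integration by parts bringing an extra $t^{-1}$). The triangular structure of the system is essential here: when $|\beta|=|\alpha|$ the remaining $\partial_v^\nu$ reduces to the identity, so the inner factor is either $\nabla_x\gamma$ or $\gamma$ itself, both uniformly controlled by \eqref{boot:in1}--\eqref{boot:in2}; for intermediate $|\beta|$ the lower-order factors can be placed in \eqref{znorm} or the auxiliary \eqref{zprime} (exploiting Sobolev embeddings and the bootstrap), so that only the top-order factor $\partial_v^\alpha\gamma$ is left in $L^2_{x,v}$ and appears linearly.

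The hard step I expect to encounter is verifying that, after these manipulations, the top-order contributions produce an inequality of the form
\begin{equation*}
\frac{d}{dt}\|\partial_v^\alpha\gamma(t)\|_{L^2_{x,v}}^2 \lesssim \varepsilon^2\ip{t}^{-1}\|\partial_v^\alpha\gamma(t)\|_{L^2_{x,v}}^2 + (\textnormal{time-integrable remainder}),\qquad |\alpha|\in\{2,3\},
\end{equation*}
from which Grönwall yields the logarithmic growth $\ip{t}^{c\varepsilon^2}$ claimed in \eqref{boot:in3}--\eqref{boot:in4}. For $|\alpha|=3$ this is genuinely sharp: the full $H^3_v$ budget is consumed, and the two integrations by parts in \eqref{eq:sample} are needed precisely to convert the dangerous prefactor $t^{|\alpha|+1}$ into the critical rate $\ip{t}^{-1}$. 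This accounting is the delicate step that just barely allows the bootstrap to close.
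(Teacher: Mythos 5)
Your overall strategy is the right one and matches the paper's: you correctly identify the role of the dispersive change of variables, the expansion of $\nabla_x\phi = \nabla_xG_2\ast\rho_\gamma$, the integration by parts on the Green function to trade $t$-powers for decay, and the triangular structure that lets lower-order factors be placed in the $Z$/$Z'$ spaces so that only the top-order velocity derivative is measured in $L^2_{x,v}$. Your account of how \eqref{boot:in3}--\eqref{boot:in4} close via a Grönwall inequality with critical rate $\ip{t}^{-1}$ is also accurate.

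There is, however, a genuine gap in your treatment of \eqref{boot:in1}. You claim that $\|\nabla_x\gamma\|_Z$ is handled ``analogously'' to $\|\gamma\|_Z$, ``with $\nabla_x\phi$ replaced by $\nabla_x^2\phi$.'' This is not the case: the equation for $\partial_{x_j}\gamma$ contains the commutator term with $\partial_{x_j}\nabla_x\phi$ (which indeed decays one order faster), but it also retains the transport term with $\nabla_x\phi$, whose contribution after integrating in time is
\begin{equation*}
\int_0^t \|\nabla_x\phi(s)\|_{L^\infty_x}\bigl(\|\nabla_v\partial_{x_j}\gamma(s)\|_Z + s\|\nabla_x\partial_{x_j}\gamma(s)\|_Z\bigr)\,ds.
\end{equation*}
The factor $\|\nabla_v\partial_{x_j}\gamma\|_Z = \|\nabla_v\partial_{x_j}\gamma\|_{L^\infty_vL^2_x}$ is only controlled by Sobolev embedding in terms of $\|\gamma\|_{H^1_xH^3_v}$, a quantity with four total derivatives that is \emph{not} captured by the bootstrap hypotheses \eqref{boot:hyp} (which control only $\|\gamma\|_{L^2_xH^3_v}$ and $\|\gamma\|_{L^2_vH^3_x}$), nor is it reachable by interpolation between them. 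The paper resolves this by performing a Littlewood--Paley paraproduct decomposition in the $v$-frequency (see \eqref{sec3:estPL1}--\eqref{sec3:estPL2}): on the high--low interaction one pays a factor $C_1/C_2<1$ to avoid the top regularity, while on the low--high interaction one integrates by parts in $x$ to upgrade $\nabla_x\phi$ to $\partial_{x_j}\nabla_x\phi$, yielding the manageable product $\|\partial_{x_j}\nabla_x\phi\|_{L^\infty_x}\|\gamma\|_{H^2_xL^2_v}\|\gamma\|_{L^2_xH^3_v}$. Without this step your estimate for $\|\nabla_x\gamma\|_Z$ does not close, and since the uniform bound $\|\nabla_x\gamma\|_{Z'}\lesssim\eps$ is used repeatedly in the multilinear analysis for the $L^2_xH^2_v$ and $L^2_xH^3_v$ bounds (e.g.\ \eqref{sec3:prot1}, \eqref{sec3:prot2}, \eqref{est5}), the entire chain \eqref{boot:in3}--\eqref{boot:in4} would be affected.
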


The bounds on the $Z$-norm, spatial regularity and spatial moments are essentially direct consequences of the above decay estimates. The control of velocity regularity is the most delicate part of the proof and requires a multilinear analysis. 

\begin{proof}
As a direct consequence of the  bootstrap hypotheses in \eqref{boot:hyp}, by Proposition \ref{prop:decay2} we have the decay estimates
\begin{align}
\|\partial^\alpha_x \nabla_x \phi\|_{L^\infty_x} &\lesssim_{\delta}\ip{t}^{-3-|\alpha|+3\delta}\varepsilon^2, \quad |\alpha| \le 1 \label{boot:dec1}\\
\|\partial^{\alpha}_x \nabla_x \phi\|_{L^\infty_x} &\lesssim_\delta \ip{t}^{-4+3\delta}\varepsilon^2, \quad |\alpha|=2 \label{boot:dec2}\\
\|\partial^{\alpha}_x \nabla_x \phi\|_{L^2_x} &\lesssim \ip{t}^{-\frac72+2\delta}\varepsilon^2, \quad |\alpha|=3. \label{boot:dec3}
\end{align}
Moreover, by \eqref{sec3:initialdat} and the conservation of the $L^p$ norms of $\gamma$, it is immediate that $\|\gamma(t)\|_{L^2_{x,v}}\le \eps$. We prove the remaining bounds in \eqref{boot:in1}--\eqref{boot:in4} step by step.

\textbf{Propagation of the $Z$-norms in \eqref{boot:in1}.}

From \eqref{eq:gamma}, integrating by parts, we have
\begin{equation*}
    \frac12 \frac{d}{dt} \int_{\R^2_x} \gamma^2(x,v,t) dx =
    \int \nabla_x \phi(x+tv,t) \cdot \nabla_v \gamma(x,v,t) \gamma(x,v,t) dx
    + \frac{t}{2}\int \Delta_x \phi(x+tv,t) \gamma^2 dx.
\end{equation*}

Integrating in time the previous identity and using Cauchy-Schwarz, we get
\begin{equation*}
\begin{aligned}
    \int_{\R^2_x} \gamma^2(x,v,t) dx 
    &\le
    \int_{\R^2_x} \gamma^2_0(x,v)dx + 2\int_0^t 
     \norm{\nabla_x \phi(s)}_{L^\infty_x} 
     \norm{\nabla_v \gamma(x,v,s)}_{L^2_x} 
     \norm{\gamma(x,v,s)}_{L^2_x}ds\\
     &\qquad+ \int_0^t s \norm{\Delta_x \phi(s)}_{L^\infty_x} \norm{\gamma(x,v,s)}^2_{L^2_x} ds,
\end{aligned}
\end{equation*}
and hence
\begin{equation*}
    \| \gamma(t)\|^2_Z \le \|\gamma(0)\|^2_Z + \int_0^t
    \|\nabla_x \phi(s)\|_{L^\infty_x} \|\nabla_v \gamma(s) \|_Z \|\gamma(s)\|_Z
    ds
    +
    \int_0^t s  \|\Delta_x \phi(s)\|_{L^\infty_x} \|\gamma(s)\|^2_Z ds.
\end{equation*}
By Grönwall's inequality, the decay estimate \eqref{boot:dec1} and the Sobolev embeddings
\begin{equation*}
 \|\gamma\|_Z\lesssim \|\gamma\|_{L^2_x H^2_v}, \quad \|\nabla_v \gamma \|_Z \lesssim \|\gamma\|_{L^2_x H^3_v},
\end{equation*}
this implies the claimed bound for the $Z$-norm of $\gamma$, since $\norm{\gamma_0}_Z\leq \eps$ by assumption \eqref{sec3:initialdat}.

Similarly, since
\begin{equation*}
    \partial_t (\partial_{x_j} \gamma) = \nabla_x \phi(x+tv,t) \cdot \{\nabla_v -t \nabla_x\}(\partial_{x_j} \gamma)
    +\partial_{x_j} \nabla_x \phi(x+tv,t) \cdot \{\nabla_v -t \nabla_x\}\gamma,
\end{equation*}
we have
\begin{equation}
\label{sec3:partialgamma}
\begin{aligned}
    \frac12 \frac{d}{dt} \int_{\R^2_x} (\partial_{x_j}\gamma)^2 dx
    &=
    \int_{\R^2_x} \nabla_x \phi(x+tv,t)\cdot\partial_{x_j}\nabla_v \gamma \partial_{x_j}\gamma dx
    + \frac{t}{2}\int_{\R^2_x} \Delta_x \phi(x+tv,t) (\partial_{x_j}\gamma)^2 dx\\
    &\qquad +\int_{\R^2_x} \nabla_x \partial_{x_j}\phi(x+tv,t) \cdot \nabla_v \gamma \partial_{x_j}\gamma dx 
    -t \int_{\R^2_x} \nabla_x \partial_{x_j}\phi(x+tv,t) \cdot \nabla_x \gamma \partial_{x_j}\gamma dx\\
    &=:T_1+T_2+T_3+T_4.
\end{aligned}
\end{equation}
To estimate the last three terms $T_2, T_3, T_4$, a direct estimate suffices and gives
\begin{equation}
\label{boundT24}
\begin{aligned}
    |T_2 + T_3 + T_4| &\lesssim t\|\Delta_x \phi\|_{L^\infty_x}
    \|\partial_{x_j}\gamma\|_Z^2 + \|\partial_{x_j}\nabla \phi\|_{L^\infty_x}\|\nabla_v \gamma\|_Z\|\partial_{x_j}\gamma\|_Z \\
    &\qquad+ t \|\partial_{x_j}\nabla \phi\|_{L^\infty_x}\|\nabla_x \gamma\|_Z \|\partial_{x_j}\gamma\|_Z.
\end{aligned}
\end{equation}
To bound the first term $T_1$ in \eqref{sec3:partialgamma}, we slightly refine these arguments and use a Littlewood-Paley decomposition to write
\begin{equation*}
   T_1 = \sum_{C_1<C_2}T_1^{C_1,C_2} + \sum_{C_1 \ge C_2}T_1^{C_1,C_2}, 
\end{equation*}
where
\begin{equation*}
  T_1^{C_1,C_2}:= \int_{\R^2_x}\nabla_x \phi(x+tv,t) \cdot \partial_{x_j} \nabla_v \gamma_{C_1} \partial_{x_j}\gamma_{C_2} dx  
\end{equation*}
and we have used the Littlewood-Paley notation in \eqref{sec3:defLP}. For the first sum above it follows with the Bernstein inequalities that
\begin{equation}\label{sec3:estPL1}
\begin{aligned}
 \sum_{C_1<C_2}|T_1^{C_1,C_2}|&\le 
 \sum_{C_1<C_2}\|\nabla_x\phi\|_{L^\infty_x}\|\partial_{x_j}\nabla_v \gamma_{C_1}\|_{L^\infty_v L^2_x}\|\partial_{x_j} \gamma_{C_2}\|_{L^\infty_v L^2_x}\\
 &\le \sum_{C_1<C_2}\|\nabla_x\phi\|_{L^\infty_x}C_1\|\partial_{x_j}\nabla_v \gamma_{C_1}\|_{ L^2_{x,v}}C_2\|\partial_{x_j} \gamma_{C_2}\|_{ L^2_{x,v}}\\
 &\leq \|\nabla_x\phi\|_{L^\infty_x} \|\gamma\|_{H^1_xH^1_v}
 \|\gamma\|_{H^1_x H^2_v}.
\end{aligned}
\end{equation}
If $C_1 \ge C_2$ we integrate by parts to obtain that
\begin{equation}
    \label{sec3:intpartsPL}
\begin{aligned}
T_1^{C_1,C_2}&= -\int_{\R^2_x}\partial_{x_j}\nabla_x \phi(x+tv,t) \cdot \nabla_v \gamma_{C_1} \partial_{x_j}\gamma_{C_2} dx
-\int_{\R^2_x}\nabla_x \phi(x+tv,t) \cdot \nabla_v \gamma_{C_1} \partial^2_{x_j}\gamma_{C_2} dx\\
&=:T_{1,1}2^{C_1, C_2} + T_{1,2}^{C_1, C_2}.
\end{aligned}
\end{equation}
Reasoning as before, the first term in \eqref{sec3:intpartsPL} leads to
\begin{align*}
 \sum_{C_1\ge C_2}|T_{1,1}^{C_1,C_2}|&\le 
 \sum_{C_1\ge C_2}\|\partial_{x_j}\nabla_x\phi\|_{L^\infty_x}\|\nabla_v \gamma_{C_1}\|_{L^\infty_v L^2_x}\|\partial_{x_j} \gamma_{C_2}\|_{L^\infty_v L^2_x}\\
 &\le \sum_{C_1 \ge C_2}\|\partial_{x_j}\nabla_x\phi\|_{L^\infty_x}C_1\|\nabla_v \gamma_{C_1}\|_{ L^2_{x,v}}C_2\|\partial_{x_j} \gamma_{C_2}\|_{ L^2_{x,v}}\\
 &\le \|\partial_{x_j}\nabla_x\phi\|_{L^\infty_x} \sum_{C_1 \ge C_2}
 \Big(\frac{C_2}{C_1}\Big) \|\gamma_{C_1}\|_{L^2_x H^3_v}
 \|\gamma_{C_2}\|_{H^1_x L^2_v},
\end{align*}
while the second term is bounded similarly as
\begin{align*}
 \sum_{C_1\ge C_2}|T_{1,2}^{C_1,C_2}|&\le 
 \sum_{C_1\ge C_2}\|\nabla_x\phi\|_{L^\infty_x}\|\nabla_v \gamma_{C_1}\|_{L^\infty_v L^2_x}\|\partial^2_{x_j} \gamma_{C_2}\|_{L^\infty_v L^2_x}\\
 &\le \sum_{C_1 \ge C_2}\|\nabla_x\phi\|_{L^\infty_x}C_1\|\nabla_v \gamma_{C_1}\|_{ L^2_{x,v}}C_2\|\partial^2_{x_j} \gamma_{C_2}\|_{ L^2_{x,v}}\\
 &\le \|\partial_{x_j}\nabla_x\phi\|_{L^\infty_x} \sum_{C_1 \ge C_2}
 \Big(\frac{C_2}{C_1}\Big) \|\gamma_{C_1}\|_{L^2_x H^3_v}
 \|\gamma_{C_2}\|_{H^2_x L^2_v}.
\end{align*}
With \eqref{sec3:intpartsPL} it follows that
\begin{equation}
    \label{sec3:estPL2}
\sum_{C_1 \ge C_2}|T_1^{C_1,C_2}|
 \lesssim
\|\partial_{x_j}\nabla_x\phi\|_{L^\infty_x} \|\gamma\|_{H^2_x L^2_v}
 \|\gamma\|_{L^2_x H^3_v}.
\end{equation}
From \eqref{sec3:estPL1} and \eqref{sec3:estPL2} we thus have
\begin{equation}
    \label{boundT1}
    |T_1| \lesssim 
    \|\nabla_x\phi\|_{L^\infty_x} \|\gamma\|_{H^1_xH^1_v}
 \|\gamma\|_{H^1_x H^2_v}+\|\partial_{x_j}\nabla_x\phi\|_{L^\infty_x} \|\gamma\|_{H^2_x L^2_v}
 \|\gamma\|_{L^2_x H^3_v}.
\end{equation}

Collecting the estimates in \eqref{boundT1} and \eqref{boundT24}, integrating \eqref{sec3:partialgamma} in time and using the decay estimate \eqref{boot:dec1} and the bootstrap assumptions in \eqref{boot:hyp}, with the assumption on the initial data in \eqref{sec3:initialdat} we obtain the boundedness of $\|\nabla_x \gamma\|_Z$ stated in \eqref{boot:in1}.

We highlight that in particular it follows with the boundedness of $\norm{\gamma}_Z+\norm{\nabla_x\gamma}_Z$ and the bootstrap assumptions \eqref{boot:hyp} that the $Z'$-norm from \eqref{zprime} remains bounded:
\begin{equation}\label{eq:Z'bd}
    \norm{\gamma}_{Z'}+\norm{\nabla_x\gamma}_{Z'}\lesssim \eps.
\end{equation}

\textbf{Energy estimates: control of the spatial moments in \eqref{boot:in2}.} 
By \eqref{eq:gamma}, we have that
\begin{equation*}
    \partial_t (x_j \gamma) - \nabla_x \phi(x+tv,t) \cdot \{\nabla_v -t \nabla_x\}(x_j \gamma)=t \partial_{x_j} \phi(x+tv,t)\gamma,
\end{equation*}
and it follows that
\begin{equation*}
     \frac{d}{dt} \|x\gamma\|^2_{L^2_{x,v}}=t\|\gamma \nabla_x\phi(x+tv,t)\cdot(x\gamma)\|_{L^2_{x,v}}\leq t \|\nabla_x \phi\|_{L^\infty_x}\|\gamma\|_{L^2_{x,v}} \|x\gamma\|_{L^2_{x,v}}.
\end{equation*}
We conclude with Grönwall's inequality and the decay bound \eqref{boot:dec1} that 
\begin{equation*}
    \|x\gamma\|^2_{L^2_{x,v}}\le \|x\gamma_0\|^2_{L^2_{x,v}}e^{\int_0^t s \|\nabla_x \phi(s)\|_{L^\infty_x}\|\gamma(s)\|_{L^2_{x,v}} ds}\lesssim \|x\gamma_0\|^2_{L^2_{x,v}}.
\end{equation*}

\textbf{Energy estimates: control of spatial regularity in \eqref{boot:in2}.}
Let $\alpha\in\N_0^2$ be a multi-index with $1\leq|\alpha|\leq 3$. From \eqref{eq:gamma} and the Leibniz rule, we get
\begin{align*}
    \frac12 \frac{d}{dt} \|\partial_x^\alpha \gamma\|^2_{L^2_{x,v}}
    =  \sum_{\substack{\beta_1+\beta_2=\alpha \\
    |\beta_2|<|\alpha|}}c_{\beta_1, \beta_2} J_{\alpha}^{\beta_1,\beta_2},
\end{align*}
with
\begin{equation*}
J_{\alpha}^{\beta_1,\beta_2}:=
    \iint \nabla_x \partial^{\beta_1}_x \phi(x+tv,t)
    \cdot \{\nabla_v -t\nabla_v\}
    \partial_x^{\beta_2}\gamma
    \partial_x^\alpha \gamma dx dv,
\end{equation*}
where the case $|\beta_2|=|\alpha|$ in the summation is excluded by the divergence structure of the nonlinearity.
If $|\beta_1|\le 2$ we will use the direct estimate
\begin{equation*}
     |J_\alpha^{\beta_1,\beta_2}|
    \le \|\nabla_x \partial_x^{\beta_1}\phi\|_{L^\infty_x}
    \left( \|\nabla_v\partial_x^{\beta_2}\gamma\|_{L^2_{x,v}}+t \|\nabla_x\partial_x^{\beta_2}\gamma\|_{L^2_{x,v}}\right) \|\partial_x^{\alpha}\gamma\|_{L^2_{x,v}},
\end{equation*}
so that if $1\leq\abs{\alpha}\leq 2$ there holds that
\begin{equation}\label{sec3:h3last-1}
     \frac{d}{dt} \|\partial_x^\alpha \gamma\|_{L^2_{x,v}} 
    \lesssim 
    \sum_{\substack{\beta_1+\beta_2=\alpha \\
    |\beta_1|,|\beta_2|<|\alpha|}} \|\nabla_x \partial_x^{\beta_1}\phi\|_{L^\infty_x}
    \left(\|\gamma\|_{H^{|\beta_2|}_x H^1_v}+t \|\gamma\|_{H^{|\beta_2|+1}_x L^2_v}\right).
\end{equation}
On the other hand, when $|\beta_1|=3$ (and thus $\beta_2=0$) we note that
\begin{equation}
\label{sec3:J2}
\begin{aligned}
|J_\alpha^{\beta_1,0}| &\le 
\Big \| 
\nabla_x \partial_x^{\beta_1} \phi(x+tv,t)\cdot\{\nabla_v- t \nabla_x\}\gamma
\Big\|_{L^2_{x,v}}
\|\partial_x^{\alpha}\gamma\|_{L^2_{x,v}}\\
&\le
\|\nabla_x \partial_x^{\alpha}\phi\|_{L^2_x}
    \left( \|\nabla_v\gamma\|_{L^2_vL^\infty_x }+t 
    \|\nabla_x\gamma\|_{L^2_vL^\infty_x }\right)\|\partial_x^{\alpha}\gamma\|_{L^2_{x,v}},
\end{aligned}
\end{equation}
and thus when $\abs{\alpha}=3$ we have that
\begin{equation}
    \label{sec3:h3last-2}
\begin{aligned}
     \frac{d}{dt} \|\partial_x^\alpha \gamma\|_{L^2_{x,v}} 
    &\lesssim 
    \sum_{\substack{\beta_1+\beta_2=\alpha \\
    |\beta_1|,|\beta_2|<|\alpha|}} \|\nabla_x \partial_x^{\beta_1}\phi\|_{L^\infty_x}
    \left( \|\gamma\|_{H^{|\beta_2|}_x H^1_v}+t \|\gamma\|_{H^{|\beta_2|+1}_x L^2_v}\right)\\
    &\qquad +\|\nabla_x \partial_x^{\alpha}\phi\|_{L^2_x}
    \left( \|\gamma\|_{H^2_x H^1_v}+t 
    \|\gamma\|_{H^3_x L^2_v}\right).
\end{aligned}
\end{equation}
Since
\begin{equation*}
    \|\gamma\|_{H^1_x H^1_v}\le \|\gamma\|^{\frac12}_{H^2_x L^2_v} \|\gamma\|^{\frac12}_{L^2_x H^2_v}, \quad
    \|\gamma\|_{H^2_x H^1_v}\le \|\gamma\|_{H^3_x L^2_v}^{\frac23}
    \|\gamma\|_{L^2_x H^3_v}^{\frac13},
\end{equation*}
we obtain the claim by integrating \eqref{sec3:h3last-1} resp.\ \eqref{sec3:h3last-2} and using the decay estimates \eqref{boot:dec1}--\eqref{boot:dec3} for $\nabla_x\phi$ as well as the bootstrap assumptions \eqref{boot:hyp}.

\textbf{Energy estimates: control of velocity regularity in \eqref{boot:in2}--\eqref{boot:in4}.}
We are left to control the $L^2_x H^{|\alpha|}_v$, $|\alpha|\le 3$, and $H^2_xH^1_v$ norms of $\gamma$, which we do case by case for $|\alpha|\in \{1,2,3\}$.

\medskip
\emph{Case $|\alpha|=1$:} Here we will establish that $\|\gamma\|_{H^2_x H^1_v}$ remains uniformly bounded. We give the full details for the highest order derivatives, as the proof for lower order terms proceeds along similar but simpler lines.

For $|\beta|=2$, we compute from \eqref{eq:gamma} that
\begin{equation}
\label{eq:multlin-h11}
    \frac12 \frac{d}{dt}\|\partial^\beta_x \partial_{v_j} \gamma\|_{L^2_{x,v}}^2=
    \sum_{\substack{\alpha_1+\alpha_2=\beta \\ |\alpha_2| < 2}} c_{\alpha_1, \alpha_2} T_1^{\alpha_1, \alpha_2}
    + t \sum_{\substack{\alpha_1+\alpha_2=\beta}} c_{\alpha_1, \alpha_2} T_2^{\alpha_1,\alpha_2},
\end{equation}
where
\begin{align*}
    T_1^{\alpha_1, \alpha_2}&:=\iint
    \partial_x^{\alpha_1}\nabla_x \phi(x+tv,t)\cdot \{\nabla_v - t\nabla_x\} \partial_x^{\alpha_2} \partial_{v_j}\gamma  \;\partial_x^\beta \partial_{v_j}\gamma dxdv, \\
    T_2^{\alpha_1,\alpha_2}&:=\iint \partial_x^{\alpha_1}\partial_{x_j}\nabla_x\phi(x+tv,t)
    \cdot \{\nabla_v - t \nabla_x\} \partial_x^{\alpha_2} \gamma \;\partial_x^\beta \partial_{v_j}\gamma dxdv.
\end{align*}
Notice that in the first summation in \eqref{eq:multlin-h11} the case $|\alpha_2|=2$ is excluded by the divergence structure of the nonlinearity. In particular, there it suffices to bound
\begin{equation*}
|T_1^{\alpha_1, \alpha_2}|\le \|\partial_x^{\alpha_1}\nabla_x \phi\|_{L^\infty_x}\left( \|\gamma\|_{H^{|\alpha_2|}_x H^2_v}+ t \|\gamma\|_{H_x^{|\alpha_2|+1}H^1_v}\right) \|\partial_x^\beta \partial_{v_j} \gamma\|_{L^2_{x,v}}.
\end{equation*}
If $|\alpha_1|< 2$ the analogous bound for $T_2^{\alpha_1, \alpha_2}$ reads
    \begin{equation*}
        |T_2^{\alpha_1, \alpha_2}| \le
         \|\partial_x^{\alpha_1}\partial_{x_j}\nabla_x \phi\|_{L^\infty_x}\{ \|\gamma\|_{H^{|\alpha_2|}_x H^1_v}+ t \|\gamma\|_{H_x^{|\alpha_2|+1}L^2_v}\} \|\partial_x^\beta \partial_{v_j} \gamma\|_{L^2_{x,v}},
    \end{equation*}
while if $|\alpha_1|=2$ (and thus $\alpha_2=0$), we have as in \eqref{sec3:J2} that
\begin{equation*}
  |T_2^{\alpha_1, \alpha_2}|\le
    \|\partial_x^{\alpha_1} \partial_{x_j}\nabla_x\phi\|_{L^2_{x}} \left(\|\nabla_v \gamma\|_{L^2_vL_x^\infty }+ t \|\nabla_x\gamma\|_{L^2_vL^\infty_x }\right) \|\partial_x^\beta \partial_{v_j}\gamma\|_{L^2_{x,v}}.
\end{equation*}
Collecting these estimates, we get
\begin{align*}
    \frac{d}{dt}\|\partial^\beta_x \partial_{v_j} \gamma\|_{L^2_{x,v}}
    &\lesssim 
    \sum_{\substack{\alpha_1+\alpha_2=\beta \\ |\alpha_2| < 2}} 
    \|\partial_x^{\alpha_1}\nabla_x \phi\|_{L^\infty_x}\left( \|\gamma\|_{H^{|\alpha_2|}_x H^2_v}+ t \|\gamma\|_{H_x^{|\alpha_2|+1}H^1_v}\right)\\
    &\qquad+ t \sum_{\substack{\alpha_1+\alpha_2=\beta \\  
    |\alpha_1|<2}}\|\partial_x^{\alpha_1}\partial_{x_j}\nabla_x \phi\|_{L^\infty_x}\left( \|\gamma\|_{H^{|\alpha_2|}_x H^1_v}+ t \|\gamma\|_{H_x^{|\alpha_2|+1}L^2_v}\right) \\
    & \qquad +\|\partial_x^{\beta} \nabla_x\phi\|_{L^2_{x}} 
    \left(\|\gamma\|_{H^2_x H^1_v}+ t \|\gamma\|_{H^3_x L^2_v}\right).
\end{align*}
This yields the claimed bound for $\|\gamma\|_{H^2_x H^1_v}$ in \eqref{boot:in2} by integrating in time and using the assumption on the initial datum in \eqref{sec3:initialdat}, the bootstrap hypotheses in \eqref{boot:hyp}, the $L^\infty$ decay estimates in \eqref{boot:dec1},\eqref{boot:dec2} and the $L^2$ bound in \eqref{boot:dec3}.

\medskip
\emph{Case $|\alpha|= 2$:}
We compute that
\begin{equation}\label{eq:enest}
    \frac{1}{2} \frac{d}{d t} \| \partial^\alpha_v \gamma \|^2_{L^2_{x,v}}
    = \sum_{\substack{\beta_1 + \beta_2=\alpha \\ |\beta_2| < |\alpha|}}
    c_{\beta_1, \beta_2}\mathcal{T}_1^{\beta_1,\beta_2}
     - \sum_{\substack{\beta_1 + \beta_2=\alpha \\ |\beta_2| < |\alpha|}}c_{\beta_1, \beta_2}\mathcal{T}_2^{\beta_1, \beta_2},
\end{equation}
where
\begin{equation}
    \begin{aligned}
    \label{sec3:T12}
    \mathcal{T}_1^{\beta_1, \beta_2}&:=
    \iint_{\mathbb{R}^2_{x,v}} \nabla_x \partial^{\beta_1}_v\{ \phi(x+tv,t) \}\cdot \partial^{\beta_2}_v \nabla_v \gamma(x,v,t)  \partial_v^\alpha \gamma(x,v,t) d x d v, \\
    T_2^{\beta_1, \beta_2}&:=t\iint_{\mathbb{R}^2_{x,v}} 
    \nabla_x \partial^{\beta_1}_v \{\phi(x+tv,t) \}\cdot \partial^{\beta_2}_v \nabla_x\gamma(x,v,t) \partial_v^\alpha \gamma(x,v,t) d x d v.
 \end{aligned}
 \end{equation}
From the decay estimates \eqref{boot:dec1} and \eqref{boot:dec2} we directly see that the first term gives an acceptable contribution:
\begin{equation}
\label{sec3:recall}
\Big|\mathcal{T}_1^{\beta_1,\beta_2}\Big|
\le t^{|\beta_1|}\|\partial_x^{\beta_1} \nabla_x \phi \|_{L^\infty_x} \|\gamma\|^2_{L^2_x H^{|\alpha|}_v}\lesssim \varepsilon^2 t^{-2+3\delta} \|\gamma\|^2_{L^2_x H^{|\alpha|}_v}.
\end{equation}
Concerning the term $\mathcal{T}_2^{\beta_1,\beta_2}$ in \eqref{eq:enest}, we
use a multilinear expansion, replacing $\phi$ with the integral expression $G_2 * \rho_\gamma$ given in \eqref{sec2:greenconv}, which gives that
\begin{equation}
\label{sec3:exp1}
\mathcal{T}_2^{\beta_1,\beta_2}=t^{|\beta_1|+1} \int_{0}^\infty I_R^{\beta_1, \beta_2} e^{-R}\frac{dR}{4 \pi R},
\end{equation}
where
\begin{align*}
I_R^{\beta_1, \beta_2}&:= \iint_{\R^2} \gamma^2(y-tw,w,t) \partial_x^{\beta_1} \partial_{x_j}
\Big\{ \chi(R^{-\frac{1}{2}}(x-y))\Big\}\\
&\qquad\qquad \cdot\partial_{x_j}\partial_v^{\beta_2}\gamma(x-tv,v,t) \partial_v^\alpha \gamma(x-tv,v,t) dx dy dw dv.
\end{align*}
From the dispersive change of variables $u\mapsto y-tw=:a$ and $v\mapsto x-tv=:b$ we obtain that
\begin{equation*}
\begin{aligned}
    I_R^{\beta_1, \beta_2}
    =
    t^{-4} \iint_{\R^2} \gamma^2\Big(a, \frac{y-a}{t},t&\Big) \partial_x^{\beta_1} \partial_{x_j} 
    \Big\{ \chi(R^{-\frac12}(x-y)) \Big \} \\
    &\cdot\partial_{x_j} \partial_v^{\beta_2} \gamma
    \Big(b, \frac{x-b}{t}, t\Big) \partial_v^\alpha 
    \gamma\Big(b, \frac{x-b}{t},t\Big) dx dy da db.
\end{aligned}
\end{equation*}
Next we integrate by parts in the $y$ variable, using
\begin{equation*}
    \partial_x^{\beta_1} \partial_{x_j} 
    \Big\{ \chi(R^{-\frac12}(x-y)) \Big \}
    =
    (-1)^{|\beta_1|}\partial_y^{\beta_1} \partial_{x_j} 
    \Big\{ \chi(R^{-\frac12}(x-y)) \Big \},
\end{equation*}
and obtain that
\begin{equation}
\label{sec3:exp2}
\begin{aligned}
    I_R^{\beta_1, \beta_2}= t^{-4-|\beta_1|} \sum_{\substack{\theta_1+\theta_2=\beta_1, \\ |\theta_1|\le |\theta_2|}} c_{\theta_1, \theta_2} I_R^{\theta_1, \theta_2, \beta_2},
\end{aligned}
\end{equation}
where
\begin{equation}
\label{sec2:multlin}
\begin{aligned}
I_R^{\theta_1, \theta_2, \beta_2}:= \iint_{\R^2}
    \partial_v^{\theta_1}
    \gamma\Big(a, \frac{y-a}{t}, t\Big) &\partial_v^{\theta_2} \gamma\Big(a,\frac{y-a}{t},t\Big) \partial_{z_j} \Big \{ \chi(R^{-\frac12}z) \Big \} \\
    &\cdot \partial_{x_j} \partial_v^{\beta_2} 
    \gamma\Big(b, \frac{z+y-b}{t},t\Big) \partial_v^\alpha \gamma\Big(b, \frac{z+y-b}{t},t\Big) dz dy da db.
\end{aligned}
\end{equation}
To recap, from \eqref{sec3:exp1} and \eqref{sec3:exp2} we have that
\begin{equation}
\label{sec3:Tcal}
    \mathcal{T}_2^{\beta_1, \beta_2}=
t^{-3}\sum_{\substack{\theta_1+\theta_2=\beta_1, \\ |\theta_1|\le |\theta_2|}} c_{\theta_1, \theta_2} 
    \int_0^\infty I_{R}^{\theta_1, \theta_2, \beta_2} e^{-R} \frac{dR}{4 \pi R},
\end{equation}
and it suffices to bound appropriately the quantity $ I_{R}^{\theta_1, \theta_2, \beta_2}$ in \eqref{sec2:multlin}, where $\abs{\beta_2}<\abs{\alpha}=2$ and $\beta_1+\beta_2=\alpha$ by construction (see \eqref{eq:enest}). In what follows we will show that
\begin{equation}\label{eq:IR2_bd}
    \abs{I_R^{\theta_1, \theta_2, \beta_2}}\lesssim R^{\frac12}t^2 
    \varepsilon^2\|\gamma\|^2_{L^2_x H^2_v},
\end{equation}
which by \eqref{sec3:Tcal} implies
\begin{equation*}
\abs{\mathcal{T}_2^{\beta_1,\beta_2}}\lesssim t^{-1} \varepsilon^2 \|\gamma\|^2_{L^2_x H^2_v}.
\end{equation*}
Together with \eqref{eq:enest} and \eqref{sec3:recall}, by Gr\"onwall's inequality and the assumption on the initial datum in \eqref{sec3:initialdat} we then obtain the slowly growing bound $\|\gamma\|_{L^2_x H_v^2}$ stated in \eqref{boot:in3}.

To establish \eqref{eq:IR2_bd}, we start by considering the case $\beta_2=0$, where $\beta_1=\alpha$. If $\theta_1=0$, then $\theta_2=\beta_1=\alpha$ and we see that
\begin{equation}
\label{sec3:prot1}
\begin{aligned}
\Big|I_R^{0, \alpha, 0}\Big| &\le
    \left(\norm{
    \gamma\Big(a, \frac{y}{t},t\Big)}_{L_y^\infty L^2_a}+\Big\|
    \gamma\Big(a, \frac{y-a}{t},t\Big)-\gamma\Big(a, \frac{y}{t},t\Big)\Big\|_{L^2_aL_y^\infty }\right)\\
    &\qquad \cdot
    \Big\|\partial_v^{\alpha} \gamma\Big(a,\frac{y-a}{t},t\Big) 
    \Big\|_{L^2_{y,a}}
    \norm{\partial_{z_j} \Big \{ \chi(R^{-\frac12}z) \Big \}}_{L^1_z}\\
    &\qquad \cdot 
    \left(\Big\|\partial_{x_j}  
    \gamma\Big(b, \frac{z+y}{t},t\Big)\Big\|_{L^\infty_y L^2_b}+\Big\|\partial_{x_j}  
    \gamma\Big(b, \frac{z+y-b}{t},t\Big)-\partial_{x_j}  
    \gamma\Big(b, \frac{z+y}{t},t\Big)\Big\|_{L^2_bL^\infty_y }\right)\\
    &\qquad \cdot\Big\|\partial_v^\alpha \gamma\Big(b, \frac{z+y-b}{t},t\Big)\Big\|_{L^2_{y,b}}\\
  &\lesssim R^{\frac12}t^2 
  \|\gamma\|_{Z'}
    \|\nabla_x 
    \gamma\|_{Z'} \|\gamma\|^2_{L^2_x H^{2}_v}.
    \end{aligned}
    \end{equation}
By the boundedness of the $Z$-norms in \eqref{boot:in1}, we get
    \begin{equation*}       \abs{I_R^{0,\alpha,0}}\lesssim R^{\frac12}t^2 \varepsilon^2 \| \gamma\|^2_{L^2_xH^2_v}.
\end{equation*}
If $\beta_2=0$ and $\abs{\theta_1}=\abs{\theta_2}=1$, then by Hölder's inequality (in the $y$ variable) we have
\begin{equation}
\label{sec3:prot2}
\begin{aligned}
    \abs{I_R^{\theta_1, \theta_2, 0}}
    &\le 
    \Big\|
    \partial_{v}^{\theta_1}\gamma\Big(a, \frac{y-a}{t},t\Big) \Big\|_{L^2_aL^4_y }
    \Big\|\partial_v^{\theta_2} \gamma\Big(a,\frac{y-a}{t},t\Big) 
    \Big\|_{ L^2_aL^4_y}
    \norm{\partial_{z_j} \Big \{ \chi(R^{-\frac12}z) \Big \}}_{L^1_z}\\
    &\qquad \cdot 
    \left(\Big\|\partial_{x_j}  
    \gamma\Big(b, \frac{z+y}{t},t\Big)\Big\|_{L^\infty_y L^2_b}+\Big\|\partial_{x_j}  
    \gamma\Big(b, \frac{z+y-b}{t},t\Big)-\gamma\Big(b, \frac{z+y}{t},t\Big)\Big\|_{L^2_bL^\infty_y }\right)\\
    &\qquad\cdot\Big\|\partial_v^\alpha \gamma\Big(b, \frac{z+y-b}{t},t\Big)\Big\|_{L^2_{b,y}}\\
    &\lesssim R^{\frac12}t^2\norm{\nabla_v\gamma}_{L^2_xL^4_v}^2
\norm{\nabla_x\gamma}_{Z'}\norm{\gamma}_{L^2_{x} H^2_v}\\
    &\lesssim R^{\frac12}
    t^2\norm{\gamma}_{L^2_{x}H^1_v}\norm{\nabla_x\gamma}_{Z'}\norm{\gamma}^2_{L^2_{x} H^2_v},
\end{aligned}
\end{equation}
where in the last step we have used the Gagliardo-Nirenberg inequality
\begin{equation}
\label{sec3:GNS}
    \|\nabla_v\gamma\|_{L^2_xL^4_v }\lesssim \|\gamma\|_{L^2_{x}H^1_v}^{\frac12}
    \|\gamma\|_{L^2_x H^2_v}^{\frac12}.
\end{equation}
By the already established boundedness of $\|\nabla_x \gamma\|_{Z'}$ in \eqref{boot:in1} resp.\ \eqref{eq:Z'bd} 
and of $\|\gamma\|_{L^2_x H^1_v}$ in \eqref{boot:in2}, we get
\begin{equation*}
    \abs{I_R^{\theta_1, \theta_2, 0}}
    \lesssim
    R^{\frac12}t^2 \varepsilon^2 \| \gamma\|^2_{L^2_xH^2_v}.
\end{equation*}

It remains to study the case $\abs{\beta_2}=1$. Here we have $\abs{\beta_1}=1$ and $\theta_2=\beta_1$, $\theta_1=0$. In this case, we integrate by parts once more:
\begin{equation}
\label{mult:intpart1}
    I_R^{0,\beta_1, \beta_2}= t^{-1}  \sum_{(\theta_3,\theta_4)\in\{(e_j,\beta_1),(0,\beta_1+e_j)\}} c_{\theta_3, \theta_4}\Im_R^{\theta_3, \theta_4},
\end{equation}
where
\begin{equation}
\label{mult:intpart}
    \begin{aligned}
        \Im_R^{\theta_3, \theta_4}:= \iint_{\R^2} 
        \partial_v^{\theta_3}
        \gamma\Big(a, \frac{y-a}{t},t\Big)&\partial_v^{\theta_4} \gamma\Big(a,\frac{y-a}{t},t\Big)
        \chi(R^{-\frac12}(x-y))\\
    &\cdot \partial_{x_j} \partial_v^{\beta_2} \gamma\Big(b, \frac{x-b}{t},t\Big) \partial_v^\alpha \gamma\Big(b, \frac{x-b}{t},t\Big) dx dy da db.
    \end{aligned}
\end{equation}
When $\theta_3=0$, $|\theta_4|=|\alpha|$, we have that
\begin{equation*}
\begin{aligned}
    \abs{\Im_R^{0,\theta_4}}
    &\le
    \Big\|
    \gamma\Big(a, \frac{y-a}{t},t\Big) \Big\|_{L^\infty _y L^2_a}
    \Big\|\partial_v^{\theta_4} \gamma\Big(a,\frac{y-a}{t},t\Big) 
    \Big\|_{L^2_{y,a}}
    \norm{\chi(R^{-\frac12}(x-y))}_{L^2_y}\\
    &\qquad \cdot 
    \Big\|\partial_{x_j}  \partial_v^{\beta_2}
    \gamma\Big(b, \frac{x-b}{t},t\Big)\Big\|_{L^2_{x,b}} \Big\|\partial_v^\alpha \gamma\Big(b, \frac{x-b}{t},t\Big)\Big\|_{L^2_{x,b}} \\
    &\lesssim R^{\frac12} t^3 
    \|\gamma\|_{Z'}\|\gamma\|_{H^1_xH^1_v}\|\gamma\|_{L^2_x H^2_v}^2.
\end{aligned}
\end{equation*}
By the already proved boundedness of $\|\gamma\|_{Z'}$
in \eqref{boot:in1} resp.\ \eqref{eq:Z'bd} and $\|\gamma\|_{H^1_xH^1_v}$ in \eqref{boot:in2}, we conclude that
\begin{equation}
    \label{mult:in3}
    \abs{\Im_R^{0,\theta_4}}\lesssim R^{\frac12} t^3 \varepsilon^2 \|\gamma\|^2_{L^2_x H^2_v}.
\end{equation}
Similarly, if $|\theta_3|=|\theta_4|=1$ we obtain as above 
\begin{equation*}\begin{aligned}
 \abs{\Im_R^{\theta_3, \theta_4}}
    &\le
    \Big\|\nabla_v \gamma\Big(a, \frac{y-a}{t},t\Big)\Big\|^2_{L^2_{a}L^4_y }
    \left\|\chi\left(R^{-\frac12}(x-y)\right)\right\|_{L^2_y}\\
    &\qquad \cdot
\Big\|\partial_{x_j}\partial_v^{\beta_2} \gamma\Big(b, \frac{x-b}{t},t\Big)
\Big\|_{L^2_{b,x}}
    \Big\|\partial_v^{\alpha} \gamma\Big(b, \frac{x-b}{t},t\Big)\Big\|_{L^2_{b,x}}\\
    &\lesssim 
    R^{\frac12}t^3 \|\nabla_v\gamma\|^2_{L^2_{x}L^4_v} \norm{\gamma}_{H^1_xH^1_v}\norm{\gamma}_{L^2_x H^2_v}\\
    &\lesssim R^{\frac12} t^3 \norm{\gamma}_{L^2_x H^1_v}\norm{\gamma}_{H^1_x H^1_v}\|\gamma\|^2_{L^2_x H^2_v},
\end{aligned}
\end{equation*}
where we used again \eqref{sec3:GNS}.
By the already proved boundedness of $\|\gamma\|_{H^1_xH^1_v}$ in \eqref{boot:in2}, it follows that
\begin{equation}
    \label{mult:in4}
     \abs{\Im_R^{\theta_3, \theta_4}}\lesssim
     R^{\frac12} t^3 \varepsilon^2\|\gamma\|^2_{L^2_x H^2_v}.
\end{equation}

Collecting \eqref{mult:in3} and \eqref{mult:in4} inside \eqref{mult:intpart1}, we obtain
\begin{equation*}
    \abs{I_R^{0, \beta_1, \beta_2}}\lesssim R^{\frac12}t^2 
    \varepsilon^2\|\gamma\|^2_{L^2_x H^2_v},
\end{equation*}
finishing the proof of \eqref{eq:IR2_bd}.

\medskip
\emph{Case $|\alpha|=3$:}
We use the decomposition \eqref{eq:enest} and \eqref{sec3:T12}, and study first $\mathcal{T}_1^{\beta_1,\beta_2}$. If $|\beta_1|, |\beta_2|\le 2$,  we can proceed as in \eqref{sec3:recall}, getting
\begin{equation}
\label{est3}
\Big|\mathcal{T}_1^{\beta_1,\beta_2}\Big|\lesssim \varepsilon^2 t^{-2+3\delta} \|\gamma\|^2_{L^2_x H^{3}_v}.
\end{equation}
Otherwise, if all the derivatives fall on $\nabla_x \phi$ (and thus $\beta_1=\alpha$), we change variables
\begin{equation*}
\begin{aligned}
\mathcal{T}_1^{\alpha,0}
&=t^{|\alpha|-2} \iint_{\mathbb{R}^2_{x,v}} 
\nabla_x \partial^{\alpha}_x \phi(x+tv,t) \cdot  \nabla_v \gamma\Big(a,\frac{x-a}{t},t\Big) \partial_v^\alpha \gamma\Big(a,\frac{x-a}{t},t\Big) 
d x d a 
\end{aligned}
\end{equation*}
and get
\begin{equation*}
\begin{aligned}
\abs{\mathcal{T}_1^{\alpha,0}}&\le t^{|\alpha|-2} \|\nabla_x \partial_x^\alpha \phi \|_{L^2_x} \Big\|\nabla_v \gamma\Big(a, \frac{x-a}{t},t\Big)\Big\|_{L^2_a L^\infty_x}
\Big\|\partial_v^\alpha\gamma\Big(a, \frac{x-a}{t},t\Big)\Big\|_{L^2_{x,a}}\\
&\le t^{|\alpha|-1} \|\nabla_x \partial_x^\alpha \phi\|_{L^2_x} \|\nabla_v \gamma \|_{L^2_x L^\infty_v} 
\|\gamma\|_{L^2_x H^3_v} 
\lesssim 
t^{|\alpha|-1} \|\nabla_x \partial_x^\alpha \phi\|_{L^2_x} 
\|\gamma\|^2_{L^2_x H^3_v}\\
&\lesssim \varepsilon^2 \ip{t}^{-\frac32+2\delta} \|\gamma\|^2_{L^2_x H^3_v},
\end{aligned}
\end{equation*}
where in the last inequality we used the Sobolev embedding $\|\nabla_v\gamma\|_{L^2_x L^\infty_v} \lesssim \|\gamma\|_{L^2_x H^3_v}$ and the decay estimate \eqref{boot:dec3}.

We now pass to study the term $\mathcal{T}_2^{\beta_1, \beta_2}$ in \eqref{sec3:T12}. We perform the multilinear expansion as in \eqref{sec3:Tcal} and we study the term $I_R^{\theta_1,\theta_2, \beta_2}$ in \eqref{sec2:multlin}. 

If $\beta_2=\theta_1=0$ and $\theta_2=\alpha$, then we proceed as in \eqref{sec3:prot1} and we get
\begin{equation}
\label{est5}
    \abs{I_R^{0, \alpha, 0}} \lesssim R^{\frac12} t^2 \|\gamma\|_{Z'} \|\nabla_x \gamma \|_{Z'} \|\partial_v^\alpha \gamma\|^2_{L^2_{x,v}}\lesssim R^{\frac12}t^2 \varepsilon^2 \| \gamma\|^2_{L^2_xH^3_v}.
\end{equation}
If $\beta_2=0$, $|\theta_1|=1$ and $|\theta_2|=2$, 
arguing as in \eqref{sec3:prot2}, we have
\begin{equation*}
\begin{aligned}
    \abs{I_R^{\theta_1, \theta_2, 0}}
    &\lesssim R^{\frac12} t^2 \|\partial_v^{\theta_1} \gamma\|_{L^2_x L^4_v}\|\partial^{\theta_2}_v \gamma \|_{L^2_x L^4_v}\|\nabla_x\gamma\|_{Z'} \|\partial_v^\alpha \gamma\|_{L^2_{x,v}}\\ 
    &\lesssim R^{\frac12} t^2 \|\nabla_x \gamma\|_{Z'}
    \|\gamma\|^{\frac12}_{L^2_{x}H^1_v}
    \|\gamma\|_{L^2_x H^2_v}\|\gamma\|^{\frac32}_{L^2_x H_v^{3}}
\end{aligned}\end{equation*}
where in the last inequality we have used \eqref{sec3:GNS} for
$\|\partial_v^{\theta_1} \gamma\|_{L^2_x L^4_v}$ and $\|\partial^{\theta_2}_v \gamma \|_{L^2_x L^4_v}$.
By the already established bounds \eqref{boot:in2} and \eqref{boot:in3} we conclude
\begin{align}
\label{est6}
 \abs{I_R^{\theta_1, \theta_2, 0}}
 \lesssim R^{\frac12} t^2 \varepsilon^2
 \Big[ \varepsilon^{\frac12}
 \ip{t}^{C\varepsilon^2}\|\gamma\|^{\frac32}_{L^2_x H^3_v}
 \Big]
 \lesssim R^{\frac12} t^2 
 \varepsilon^2\left[\varepsilon^2 \ip{t}^{6 C\varepsilon^2}+ \frac{\|\gamma\|^2_{L^2_x H^3_v}}{\ip{t}^{\frac23 C\varepsilon^2}}\right],
 \end{align}
where in the last step we applied Young's inequality.

If $|\beta_2|=1$, we consider first the subcase $\theta_1=0$ and thus $|\theta_2|=|\beta_1|=2$. We have
 \begin{equation*}
\begin{aligned}
    \abs{I_R^{0, \theta_2, \beta_2}}
    &\lesssim 
    \norm{\gamma\left(a, \frac{y+z-a}{t},t\right)}_{L^\infty_y L^2_a} \Big\|\partial^{\theta_2}_v \gamma\Big(a, \frac{y+z-a}{t},t\Big)\Big\|_{L^4_y L^2_a} \Big\|\nabla_z\Big\{ \chi(R^{-\frac12}z)\Big\}\Big\|_{L_z^{1}}\\
    &\qquad \cdot
    \Big\|\partial_{x_j} \partial_v^{\beta_2}\gamma\Big(b, \frac{y-b}{t},t\Big)\Big\|_{L^4_y L^2_{b}} 
    \Big\| \partial_v^\alpha \gamma\Big(b, \frac{y-b}{t},t\Big)\Big\|_{L^2_{y,b}}\\
    & \lesssim  R^{\frac12}t^{2}  \|\gamma\|_{Z'}
    \|\partial_v^{\theta_2}\gamma\|_{L^2_{x,v}}^{\frac12}
    \|\gamma\|_{L^2_x H^3_v}^{\frac12}
    \|\gamma\|^{\frac12}_{H^1_{x}H^1_v}
    \|\gamma\|^{\frac12}_{H^1_x H^2_v}
    \|\gamma\|_{L^2_x H^{3}_v},
\end{aligned}
\end{equation*}
where in the last inequality we used again \eqref{sec3:GNS}.
With
$\|\gamma\|_{H^1_x H^2_v}\lesssim \|\gamma\|_{H^2_x H^1_v}^{\frac12}\|\gamma\|_{L^2_x H^3_v}^{\frac12}$ and \eqref{boot:in2}, \eqref{boot:in3} we conclude that
\begin{equation}
\label{est7}
 \abs{I_R^{0, \theta_2, \beta_2}}
 \lesssim  R^{\frac12}t^{2}  \varepsilon^{2}
 \Big[\varepsilon^{\frac14}\ip{t}^{\frac{C\varepsilon^2}{2}} \|\gamma\|^{\frac74}_{L^2_x H^3_v} \Big]\lesssim R^{\frac12} t^2 \varepsilon^2 \left[
    \varepsilon^2 \ip{t}^{8\varepsilon^2} +\frac{\|\gamma\|^2_{L^2_x H^3_v}}{\ip{t}^{\frac47 \varepsilon^2}}
    \right ].
    \end{equation}

On the other hand, if $|\beta_2|=1$ and $|\theta_1|=|\theta_2|=1$, we integrate by parts as in \eqref{mult:intpart1}--\eqref{mult:intpart}. In this case we have to study $\Im_R^{\theta_3, \theta_4}$ with $|\theta_3|=1, |\theta_4|=2$.
  We get
\begin{equation*}
    \begin{aligned}
    \abs{\Im_R^{\theta_3, \theta_4}}
    &\lesssim \Big\|\partial_v^{\theta_3}\gamma\Big(a, \frac{x-a}{t},t\Big)\Big\|_{L^2_a L^4_x} \Big\|\partial_v^{\theta_4}\gamma\Big(a, \frac{x-a}{t},t\Big)\Big\|_{L^2_a L^4_x}
    \norm{\chi\left(R^{-\frac12}(x-y)\right)}_{L_x^{2}}\\
    &\qquad\cdot \Big\|\partial_{x_j} \partial_v^{\beta_2} \gamma\Big(b, \frac{y-b}{t},t\Big)\Big\|_{L^2_{y,b}} 
    \Big\| \partial_v^\alpha \gamma\Big(b, \frac{y-b}{t},t\Big)\Big\|_{L^2_{y,b}}\\
    & \lesssim  R^{\frac12}t^{3} \|\gamma\|^{\frac12}_{L^2_xH^1_v} \|\gamma\|_{L^2_x H^2_v}
    \|\gamma\|_{H^1_x H^1_v}
    \|\gamma\|^{\frac32}_{L^2_x H^3_v},
\end{aligned}
\end{equation*}
and with \eqref{boot:in2}, \eqref{boot:in3} it follows that
\begin{equation}
\label{sec8}
    \abs{\Im_R^{\theta_3, \theta_4}}
    \lesssim
    R^{\frac12} t^3
\varepsilon^{2}
\Big[
\varepsilon^{\frac12}
\ip{t}^{C\varepsilon^2} \|\gamma\|^{\frac32}_{L^2_x H^3_v}\Big]
    \lesssim
    R^{\frac12} t^3 \varepsilon^2 \left[\varepsilon^2 \ip{t}^{6C \varepsilon^2}+ \frac{\|\gamma\|^2_{L^2_x H^3_v}}{\ip{t}^{\frac23C \varepsilon^2}}\right].
\end{equation}
 
Lastly, if $|\beta_2|=2$ then $\theta_1=0$ and $|\theta_2|=1$, and we use again \eqref{mult:intpart1}--\eqref{mult:intpart}. There, either $|\theta_3|=|\theta_4|=1$
or $|\theta_3|=0, |\theta_4|=2$. If $|\theta_3|=|\theta_4|=1$, we obtain as above that
\begin{equation*}
    \begin{aligned}
    \abs{\Im_R^{\theta_3, \theta_4}}
    &\lesssim 
    \Big\|\nabla_v\gamma\Big(a, \frac{x-a}{t},t\Big)\Big\|^2_{L^2_a L^4_x} 
    \norm{\chi\left(R^{-\frac12}(x-y)\right)}_{L_x^{2}}\\
    &\qquad \cdot \Big\|\partial_{x_j} \partial^{\beta_2}_v \gamma\Big(b, \frac{y-b}{t},t\Big)\Big\|_{L^2_{y,b}} 
    \Big\| \partial_v^\alpha \gamma\Big(b, \frac{y-b}{t},t\Big)\Big\|_{L^2_{y,b}}\\
    & \lesssim  R^{\frac12}t^{3} \|\gamma\|_{L^2_xH^1_v} \|\gamma\|_{L^2_x H^2_v}
    \|\gamma\|_{H^1_x H^2_v}
    \|\gamma\|_{L^2_x H^3_v}\\
    &\lesssim R^{\frac12} t^3 \|\gamma\|_{L^2_xH^1_v} \|\gamma\|_{L^2_x H^2_v}
    \|\gamma\|_{H^2_x H^1_v}^{\frac12}
    \|\gamma\|^{\frac32}_{L^2_x H^3_v},
\end{aligned}
\end{equation*}
and hence
\begin{equation}
\label{est9}
  \abs{\Im_R^{\theta_3, \theta_4}}\lesssim R^{\frac12}t^{3} \varepsilon^{\frac52}\ip{t}^{C\varepsilon^2} \|\gamma\|^{\frac32}_{L^2_x H^3_v}
    \lesssim R^{\frac12}t^{3}\varepsilon^2\left[\varepsilon^2 \ip{t}^{6C\varepsilon^2}+ \frac{\|\gamma\|^2_{L^2_x H^3_v}}{\ip{t}^{\frac23 C\varepsilon^2}}\right].
    \end{equation}
If $|\theta_3|=0, |\theta_4|=2$, analogously we obtain
    \begin{equation*}
    \begin{aligned}
    \abs{\Im_R^{0, \theta_4}}
    &\lesssim \Big\|\gamma\Big(a, \frac{x-a}{t},t\Big) \Big\|_{L^\infty_x L^2_a}
\Big\|
\partial_v^{\theta_2}\gamma\Big(a, \frac{x-a}{t},t\Big)\Big\|_{L^2_{x,a}} 
    \norm{\chi\left(R^{-\frac12}(x-y)\right)}_{L_x^{2}}\\
    &\quad \times \Big\|\partial_{x_j} \partial^{\beta_2}_v \gamma\Big(b, \frac{y-b}{t},t\Big)\Big\|_{L^2_{y,b}} 
    \Big\| \partial_v^\alpha \gamma\Big(b, \frac{y-b}{t},t\Big)\Big\|_{L^2_{y,b}}\\
    & \lesssim  R^{\frac12}t^{3} \|\gamma\|_{Z'}
    \|\gamma\|_{L^2_x H^2_v}
    \|\gamma\|_{H^1_x H^2_v}
    \|\gamma\|_{L^2_x H^3_v},
\end{aligned}
\end{equation*}
and thus
\begin{equation}
\label{est10}
 \abs{\Im_R^{0, \theta_4}}\lesssim R^{\frac12}t^{3} \varepsilon^{\frac52}\ip{t}^{C\varepsilon^2} \|\gamma\|^{\frac32}_{L^2_x H^3_v}
    \lesssim R^{\frac12}t^{3}\varepsilon^2\left[\varepsilon^2 \ip{t}^{6C\varepsilon^2}+ \frac{\|\gamma\|^2_{L^2_x H^3_v}}{\ip{t}^{\frac23 C\varepsilon^2}}\right].
\end{equation}
    
Collecting the estimates \eqref{est5}--\eqref{est10}, we conclude that
\begin{equation*}
\abs{\mathcal{T}_2^{\beta_1, \beta_2}} 
\lesssim t^{-1}\varepsilon^2\left[\varepsilon^2 \ip{t}^{8C\varepsilon^2}+ \|\gamma\|^2_{L^2_x H^{3}_v}\right].
\end{equation*}
Together with \eqref{est3} and the assumptions on the initial data in \eqref{sec3:initialdat}, by Grönwall's inequality we obtain the claimed bound \eqref{boot:in4}.
\end{proof}

\begin{remark}[Propagation of regularity and sharp decay]\label{rem:sharpened2d} 
Reasoning as in Proposition \ref{sec3:boot}, one can propagate boundedness of higher regularity in $x,v$, and of higher order spatial moments as well. More precisely, for an arbitrary $n\in\N$ one could propagate $H^n_{x,v}$ estimates on $\gamma$, with $H^n_xL^2_v$ and $L^2_xH_v^{n-2}$ norms uniformly bounded and $L^2_xH^{n}_v$ bounds slowly growing. In particular, provided that at least $\norm{\gamma}_{L^2_x H^3_v}$ remains bounded, we would obtain the sharp decay estimate for $\nabla_x\phi$, since as in \eqref{eq:nablaphiibp} we have that
\begin{equation*}
    \norm{\nabla_x \phi}_{L^\infty_x}\lesssim 
    \ip{t}^{-3}\norm{\gamma}_{L^2_x L^\infty_v}
    \norm{\nabla_v\gamma}_{L^2_x L^\infty_v }\lesssim
    \ip{t}^{-3} \norm{\gamma}^2_{L^2_x H^3_v}.
\end{equation*}
\end{remark}

\subsection{Free scattering}\label{sec:2dfree_scat}
We can now conclude the proof of Theorem \ref{intro:thm1} in the case $d=2$. We state it here for the unknown $\gamma$, and note that the result for $\mu(x,v,t)=\gamma(x-tv,v,t)$ with $\gamma(x,v,0)=\mu_0(x,v)$ follows directly.
\begin{theorem}[Free scattering in $d=2$]
\label{sec3:main}
    If $d=2$, there exists $\eps_0>0$, such that if 
    \begin{equation*}
    \| x  \mu_0\|_{L^2_{x,v}}+\norm{\mu_0}_Z+\norm{\nabla_x\mu_0}_Z+\|\mu_0\|_{H^3_{x,v}} \le \eps\leq \eps_0,
    \end{equation*} 
   then there exists a unique, global solution $\gamma\in C_t(\R,H^3_{x,v}(\R^2\times\R^2))$ of \eqref{eq:gamma} which satisfies moreover
    \begin{equation}
    \label{sec3:est2}
       \|x \gamma(t)\|_{L^2_{x,v}} + \norm{\gamma(t)}_Z+\norm{\nabla_x\gamma(t)}_Z +\|\gamma(t)\|_{L^2_v H^3_x} \lesssim \eps, \quad \|\gamma(t)\|_{L^2_x H^3_v}\lesssim \eps \left \langle t \right \rangle^{C\eps^2},
    \end{equation}
    for a universal constant $C>0$. Moreover, there exists an asymptotic distribution function $\gamma_\infty \in H^1_{x,v} \cap Z$ such that
    \begin{equation}
    \label{sec3:scatt}
        \lim_{t \to \infty} \| \gamma(x,v,t) - \gamma_\infty(x,v)\|_{H^1_{x,v} \cap Z}= 0.
    \end{equation}
\end{theorem}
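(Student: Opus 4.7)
The plan is to combine standard local well-posedness for \eqref{eq:gamma} in the weighted Sobolev class prescribed by the data with the closed bootstrap in Proposition \ref{sec3:boot}, and then to extract the asymptotic profile by showing that $\gamma(t)$ is Cauchy-continuous in $H^1_{x,v}\cap Z$ as $t\to\infty$. First, a classical fixed point construction (as for the Cauchy problem of the Vlasov-Poisson equation in Sobolev class, using the regularizing effect of the Green function $G_2$ on the density) produces a unique local solution $\gamma\in C([0,T_\ast),H^3_{x,v})$ preserving the $L^2_{x,v}$ norm and propagating the localization $\|x\gamma\|_{L^2_{x,v}}$ and the $Z$- and $\nabla_x Z$-norms on some short interval, with all of the quantities in \eqref{boot:hyp} satisfying the bootstrap hypothesis with the looser constant $2$ on a small initial time slice.

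Next I would run a standard continuity argument: define $T^\ast$ as the supremum of times on which \eqref{boot:hyp} holds on $[0,T^\ast)$. Proposition \ref{sec3:boot} gives the \emph{strict} improvements \eqref{boot:in1}--\eqref{boot:in4} on that interval, provided $\eps\leq\eps_0$ is sufficiently small so that $c_1\eps^2<\tfrac12$ and $c_2\eps\ip{T^\ast}^{c_2\eps^2}<\eps\ip{T^\ast}^\delta$ for $\delta$ larger than the bootstrap exponent; the latter condition is preserved by choosing the bootstrap constant $\delta$ in \eqref{boot:hyp} strictly larger than $c_2\eps^2$. The improved bounds combined with local well-posedness show that $\gamma$ can be extended past $T^\ast$ while still satisfying \eqref{boot:hyp}, and hence $T^\ast=+\infty$. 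This yields a global solution with the stated bounds \eqref{sec3:est2}.

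For the asymptotic convergence \eqref{sec3:scatt}, I would differentiate \eqref{eq:gamma} and show that $\partial_t\gamma$ is integrable in time in $H^1_{x,v}\cap Z$. Using \eqref{boot:dec1}--\eqref{boot:dec2} and the bounds \eqref{boot:in1}--\eqref{boot:in4}, together with the Sobolev embedding $\|\nabla_v\gamma\|_Z\lesssim \|\gamma\|_{L^2_xH^3_v}$, a direct estimate gives
\begin{equation*}
\|\partial_t\gamma\|_Z\lesssim \|\nabla_x\phi\|_{L^\infty_x}\bigl(\|\nabla_v\gamma\|_Z+t\|\nabla_x\gamma\|_Z\bigr)\lesssim \eps^3\ip{t}^{-2+3\delta+C\eps^2}.
\end{equation*}
For the $H^1_{x,v}$-piece, commuting $\partial_{x_j}$ resp.\ $\partial_{v_j}$ with the equation produces commutator terms of the form $\partial_{x_j}\nabla_x\phi\cdot\{\nabla_v-t\nabla_x\}\gamma$ and $t\,\partial_{x_j}\nabla_x\phi\cdot\{\nabla_v-t\nabla_x\}\gamma$; controlling these with \eqref{boot:dec2} and the uniform bounds on $\|\gamma\|_{H^2_xH^1_v}$ and $\|\gamma\|_{H^1_xH^2_v}\lesssim \|\gamma\|_{H^2_xH^1_v}^{1/2}\|\gamma\|_{L^2_xH^3_v}^{1/2}$ yields
\begin{equation*}
\|\partial_t\nabla_{x,v}\gamma\|_{L^2_{x,v}}\lesssim \eps^3\ip{t}^{-2+3\delta+C\eps^2},
\end{equation*}
which is integrable in $t$ for $\eps,\delta$ sufficiently small. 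Hence $\gamma(t)$ is Cauchy in $H^1_{x,v}\cap Z$ as $t\to\infty$ and converges to some $\gamma_\infty\in H^1_{x,v}\cap Z$ (the membership following from the uniform bounds and lower semicontinuity), which gives \eqref{sec3:scatt}.

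The main obstacle here, already addressed by Proposition \ref{sec3:boot}, is the slow-growth factor $\ip{t}^{C\eps^2}$ in the top-order velocity norm $\|\gamma\|_{L^2_xH^3_v}$; what makes the scattering step succeed at the $H^1_{x,v}\cap Z$ level (rather than in $H^3_{x,v}$) is precisely that the commutator terms involving the slowly growing norm are multiplied by force field factors decaying faster than $\ip{t}^{-2}$, so that the growth is absorbed and integrability in time is preserved as long as $\eps$ is chosen small enough relative to $\delta$.
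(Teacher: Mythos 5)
Your proposal is correct and follows essentially the same route as the paper: global existence by combining classical local well-posedness with the continuation afforded by Proposition \ref{sec3:boot}, and then convergence by showing $\partial_t\gamma$ is time-integrable in $H^1_{x,v}\cap Z$ using the decay bounds \eqref{boot:dec1}--\eqref{boot:dec2} together with the propagated norms (your commutator terms are exactly the paper's direct estimates for $\partial_t\partial_{x_j}\gamma$ and $\partial_t\partial_{v_k}\gamma$). The only cosmetic point is that the commutator factor $\partial_{x_j}\nabla_x\phi$ is controlled by \eqref{boot:dec1} with $|\alpha|=1$ rather than \eqref{boot:dec2}, which does not affect the argument.
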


\begin{proof}
   The existence of a unique global solution $\gamma$
   of \eqref{eq:gamma} as claimed follows by classical local-wellposedness and the by propagation of the bounds on moments and derivatives
   in Proposition \ref{sec3:boot}.
    It remains to show the asymptotic convergence property \eqref{sec3:scatt}: From \eqref{eq:gamma} and Propositions \ref{prop:decay2} and \ref{sec3:boot}, we have by \eqref{sec3:est2} that for some $0<\delta\ll 1$ sufficiently small
    \begin{align*}
        \|\partial_t \gamma(t) \|_{Z} &\le \|\nabla_x \phi(t)\|_{L^\infty_x} (\|\nabla_v \gamma(t)\|_{Z} + t \|\nabla_x \gamma(t)\|_{Z})\\
        &\lesssim \eps^3 \ip{t}^{-2+\delta+C\eps^2},\\
        \|\partial_t \partial_{x_j}\gamma(t)\|_{L^2_{x,v}}&\le \norm{\nabla_x\phi(t)}_{L^\infty}(\|\partial_{x_j}\nabla_v \gamma(t)\|_{L^2_{x,v}} + t \|\partial_{x_j}\nabla_x \gamma(t)\|_{L^2_{x,v}})\\
        &\qquad +\|\partial_{x_j} \nabla_x \phi(t)\|_{L^\infty_x} (\|\nabla_v \gamma(t)\|_{L^2_{x,v}} + t \|\nabla_x \gamma(t)\|_{L^2_{x,v}})\\
        &\lesssim \eps^3 \ip{t}^{-2+\delta+C\eps^2},\\
        \|\partial_t \partial_{v_k}\gamma(t)\|_{L^2_{x,v}}&\le \norm{\nabla_x\phi(t)}_{L^\infty}(\|\partial_{v_k}\nabla_v \gamma(t)\|_{L^2_{x,v}} + t \|\partial_{v_k}\nabla_x \gamma(t)\|_{L^2_{x,v}})\\
        &\qquad +t\|\partial_{x_k} \nabla_x \phi(t)\|_{L^\infty_x} (\|\nabla_v \gamma(t)\|_{L^2_{x,v}} + t \|\nabla_x \gamma(t)\|_{L^2_{x,v}})\\
        &\lesssim \eps^3 \ip{t}^{-2+\delta+C\eps^2},
    \end{align*}
    so that $\partial_t \gamma$ is time-integrable in $H^1_{x,v}\cap Z$ and the existence of $\gamma_\infty\in H^1_{x,v} \cap Z$ satisfying \eqref{sec3:scatt} follows.
\end{proof}

\section{Long-time stability in $d=1$}
\label{sec:1d}
In this section we work in analytic functional setting. For $\lambda>0$ and $f:\R_x\times\R_v\to\R$ we consider the spatial and velocity regularity norms 
 \begin{equation*}
     \mathcal{E}_{v}^{\lambda}[f]:= \sum_{n=0}^\infty \frac{\lambda^n}{n!}\norm{\partial_v^n f}_{L^2_{x,v}}, \qquad
     \mathcal{E}_{x}^{\lambda}[f]:= \sum_{n=0}^\infty \frac{\lambda^n}{n!}\norm{\partial_x^n f}_{L^2_{x,v}}.
 \end{equation*}
The rest of this section is devoted to the proof of Theorem \ref{intro:thm1d}, which we state here again for convenience:
\begin{theorem}[Long-time stability in $d=1$]\label{thm:1d}
    There exists a universal constant $C>0$ such that the following holds. For given $R>0$, there exists $\eps_0\in (0,C^{-\frac12}R^{\frac12})$ such that if $\gamma_0$ satisfies
    \begin{equation}\label{eq:1d_assumptions}
        \sum_{a\in\{0,1\}}\left(\mathcal{E}_{x}^{R}[\partial_x^a\gamma_0]
        + \mathcal{E}_{v}^{R}[\partial_v^a\gamma_0]\right) \leq \eps \le \varepsilon_0,
    \end{equation}
    then there exist $T\gtrsim R^{2}\eps^{-4}$ and a unique solution $\gamma\in C_t([0,T],\cap_{k=1}^\infty H^k_{x,v}(\R\times\R))$ to the screened Vlasov-Poisson equation \eqref{eq:gamma} with $\gamma(x,v,0)=\gamma_0(x,v)$, which moreover satisfies
    \begin{equation*}
    \sum_{a\in\{0,1\}}\left(\mathcal{E}^{\lambda_t}_v[\partial_v^a\gamma(t)]+ \mathcal{E}_x^{\lambda_t}[\partial_x^a\gamma(t)]\right)\lesssim \eps,\quad \lambda_t=\lambda_t(R,\eps) := R-C\eps^2\ip{t}^{\frac12},\quad 0\leq t\leq T.
    \end{equation*}
\end{theorem}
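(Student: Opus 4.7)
The plan is to execute a bootstrap in the time-dependent analytic norm
\[
\mathcal{B}(t):=\sum_{a\in\{0,1\}}\left(\mathcal{E}_v^{\lambda_t}[\partial_v^a\gamma(t)]+\mathcal{E}_x^{\lambda_t}[\partial_x^a\gamma(t)]\right),
\]
with $\lambda_t=R-C\eps^2\langle t\rangle^{1/2}$ and $C>0$ a universal constant to be determined. The decreasing radius of analyticity is calibrated so that $|\dot\lambda_t|\sim \eps^2\langle t\rangle^{-1/2}$ can absorb the derivative loss arising from the $t\partial_x\gamma$ factor on the right-hand side of \eqref{eq:gamma}, whose damaging effect will turn out to scale in exactly the same way.

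As a preliminary I would upgrade the linear decay Lemma~\ref{lem:1d_decay} to two analytic estimates for $\phi$: under the bootstrap hypothesis $\mathcal{B}(t)\le 2\eps$,
\[
\mathcal{P}_0(t):=\sum_{k\ge 0}\tfrac{\lambda_t^k}{k!}\norm{\partial_x^{k+1}\phi(t)}_{L^\infty_x}\lesssim \eps^2,
\qquad
\mathcal{P}_1(t):=\sum_{k\ge 0}\tfrac{(\lambda_t t)^k}{k!}\norm{\partial_x^{k+1}\phi(t)}_{L^\infty_x}\lesssim \eps^2\langle t\rangle^{-3/2}.
\]
The bound on $\mathcal{P}_1$ would follow from the dispersive change of variables $v\mapsto x-tv$ in $\rho=\int \gamma^2\,dv$ together with an integration by parts trading each $\partial_x$ on the Green kernel $G_1$ for a $t^{-1}\partial_v$ on $\gamma$; this yields the rescaled form $\norm{(t\partial_x)^k\partial_x\phi}_{L^\infty}\lesssim \eps^2\langle t\rangle^{-3/2}$ with constants absorbed by $\mathcal{E}_v^{\lambda_t}[\gamma]$ and $\mathcal{E}_v^{\lambda_t}[\partial_v\gamma]$. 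The bound on $\mathcal{P}_0$ is the elliptic counterpart, using $\phi=G_1*\rho$, Sobolev embedding $\norm{\cdot}_{L^\infty}\lesssim \norm{\cdot}_{H^1}$ in one dimension, and the analytic control of $\rho$ by $\mathcal{E}_x^{\lambda_t}[\gamma]\cdot\mathcal{E}_x^{\lambda_t}[\gamma]$ via Leibniz.

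For the energy estimates themselves, the identities $\partial_x\{\phi(x+tv,t)\}=(\partial_x\phi)(x+tv,t)$ and $\partial_v\{\phi(x+tv,t)\}=t\,(\partial_x\phi)(x+tv,t)$ combined with Leibniz give, for $n\ge 0$,
\[
\partial_t\partial_x^n\gamma=\sum_{k=0}^n\tbinom{n}{k}(\partial_x^{k+1}\phi)(x+tv,t)\,(\partial_v-t\partial_x)\partial_x^{n-k}\gamma,
\]
and analogously for $\partial_v^n\gamma$ with an additional factor $t^k$ in the $k$-th summand. Pairing with $\partial_x^n\gamma$ (resp.\ $\partial_v^n\gamma$) in $L^2_{x,v}$, summing with the weights $\lambda_t^n/n!$, and invoking the Taylor convolution identity $\sum_n\tfrac{\lambda^n}{n!}\sum_{k\le n}\tbinom{n}{k}A_kB_{n-k}=(\sum_k\tfrac{\lambda^k}{k!}A_k)(\sum_j\tfrac{\lambda^j}{j!}B_j)$, I obtain schematically
\[
\tfrac{d}{dt}\mathcal{E}_x^{\lambda_t}[\gamma]\le \dot\lambda_t\,\mathcal{E}_x^{\lambda_t}[\partial_x\gamma]+\mathcal{P}_0(t)\bigl(\mathcal{E}_x^{\lambda_t}[\partial_v\gamma]+t\,\mathcal{E}_x^{\lambda_t}[\partial_x\gamma]\bigr),
\]
together with three analogous inequalities for $\mathcal{E}_v^{\lambda_t}[\gamma]$, $\mathcal{E}_x^{\lambda_t}[\partial_x\gamma]$, $\mathcal{E}_v^{\lambda_t}[\partial_v\gamma]$, the $v$-versions using $\mathcal{P}_1$ in place of $\mathcal{P}_0$ (the factors $t^k$ combine with $\lambda_t^k$ into $(t\lambda_t)^k$).

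The crux of the argument is the absorption step: the dangerous term $t\,\mathcal{P}_0(t)\,\mathcal{E}_x^{\lambda_t}[\partial_x\gamma]\lesssim \eps^2\langle t\rangle^{-1/2}\mathcal{E}_x^{\lambda_t}[\partial_x\gamma]$ has exactly the same $\partial_x$-shifted structure and the same rate as $|\dot\lambda_t|\,\mathcal{E}_x^{\lambda_t}[\partial_x\gamma]$, so a sufficiently large choice of $C$ makes the sum nonpositive and removes it; all remaining contributions are of the form $\eps^2\langle t\rangle^{-3/2}\mathcal{B}(t)$, which is time-integrable. Gr\"onwall then yields $\mathcal{B}(t)\le \eps+\mathcal{O}(\eps^3)$, closing the bootstrap for $\eps_0$ small. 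The argument persists as long as $\lambda_t>0$, i.e.\ up to $T\sim R^2\eps^{-4}$, matching the statement. The \emph{main obstacle} is precisely this absorption: one must simultaneously establish the sharp $(t\lambda_t)^k/k!$ form of the linear decay (which dictates the square-root schedule $\langle t\rangle^{1/2}$ in $\lambda_t$ — neither a linear nor a quadratic decrease would match) and verify that no lower-order term in the triangular hierarchy (where $\partial_x$-regularity forces the $v$-bounds and vice versa through $\partial_v-t\partial_x$) escapes the integrable regime. A weaker linear decay rate than $\langle t\rangle^{-3/2}$, or a mismatched $\lambda_t$ schedule, would either fail to close the bootstrap or shorten the $\mathcal{O}(\eps^{-4})$ time horizon.
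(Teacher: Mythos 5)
Your overall strategy coincides with the paper's: the same two tiers of analytic energies with the shrinking radius $\lambda_t=R-C\eps^2\ip{t}^{\frac12}$, the same $\ip{t}^{-3/2}$ analytic decay of the force field obtained from the dispersive change of variables, absorption of the critical $\eps^2\ip{t}^{-1/2}$ loss-of-derivative terms by $\dot\lambda_t$ (correctly identified as the crux -- a plain Gr\"onwall with $\eps^3\ip{t}^{-1/2}$ would not close on the time scale $T\sim R^2\eps^{-4}$), and the time horizon dictated by $\lambda_t>0$. However, there is a concrete gap in the assertion that the inequalities for the second-tier energies $\mathcal{E}_x^{\lambda_t}[\partial_x\gamma]$, $\mathcal{E}_v^{\lambda_t}[\partial_v\gamma]$ are ``analogous'' and require only $\mathcal{P}_0,\mathcal{P}_1$. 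For $\mathcal{E}_x^{\lambda_t}[\partial_x\gamma]=\sum_n\frac{\lambda_t^n}{n!}\norm{\partial_x^{n+1}\gamma}_{L^2_{x,v}}$ the Leibniz expansion produces binomials $\binom{n+1}{j}$ against the weights $\lambda_t^n/n!$, so your Taylor convolution identity no longer applies: the factorial mismatch forces either an extra derivative on $\gamma$ (acceptable, absorbed by $\dot\lambda_t$) or an extra derivative on $\phi$, i.e.\ an analytic norm of $\partial_x^2\phi$ with weights $\lambda_t^j\ip{t}^{j+2}/j!$. This quantity is \emph{not} controlled by your $\mathcal{P}_1=\sum_k\frac{(\lambda_t t)^k}{k!}\norm{\partial_x^{k+1}\phi}_{L^\infty_x}$ (reindexing costs an unbounded factor of order $(j+1)t/\lambda_t$). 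This is exactly why the paper proves the separate second-order linear estimate \eqref{sec7:bound2N} of Lemma \ref{lem:1d_decay}, and -- importantly -- that estimate is only \emph{conditionally} decaying: under the bootstrap it reads $\mathcal{N}_v[\ip{t}^2\partial_x^2\phi]\lesssim \eps\ip{t}^{-1/2}\left(\mathcal{E}_x[\partial_x^2\gamma]+\mathcal{E}_v[\partial_v^2\gamma]\right)+\eps^2\ip{t}^{-3/2}$, so its top-order content must itself be absorbed by $\dot\lambda_t$ in the $a=1$ estimates (the terms $B_1,B_3,D_1,D_3$ in the proof of Proposition \ref{sec7:boot}). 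Since the $a=1$ norms cannot be dropped from the bootstrap -- they are precisely what feeds your decay estimate $\mathcal{P}_1$, through $\mathcal{E}_v[\partial_v\gamma]$, $\mathcal{E}_x[\partial_x\gamma]$ and $\mathcal{M}_v[\gamma]\lesssim\mathcal{E}_v[\gamma]^{1/2}\mathcal{E}_v[\partial_v\gamma]^{1/2}$ -- this additional linear estimate and its absorption are an unavoidable part of closing the argument, not an ``analogous'' routine step.

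A smaller point: as stated, $\mathcal{P}_0\lesssim\eps^2$ does not yield your claimed $t\,\mathcal{P}_0\lesssim\eps^2\ip{t}^{-1/2}$; you need $\mathcal{P}_0\lesssim\eps^2\ip{t}^{-3/2}$ for $t\geq1$, which does follow from $\mathcal{P}_0\leq\mathcal{P}_1$ there (with the elliptic bound reserved for $t\leq1$), so it should be stated in that form. You also implicitly use the mixed-derivative interpolation $\mathcal{E}_x[\partial_v\gamma]\leq\mathcal{E}_x[\partial_x\gamma]+\mathcal{E}_v[\partial_v\gamma]$ (the paper's \eqref{sec7:interp}--\eqref{sec7:interp1}) to reduce all right-hand sides to bootstrapped quantities; this should be made explicit. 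With the second-order potential estimate added and these bookkeeping points repaired, your argument becomes the paper's proof (Lemma \ref{lem:1d_decay} combined with Proposition \ref{sec7:boot}).
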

We highlight that the time of existence improves as the analyticity-parameter $R$ resp.\ the size $\eps$ of the initial data increases resp.\ decreases. 

The proof of this result proceeds by first demonstrating some linear decay estimates in Section \ref{sec:1d_lin}, whereupon a bootstrap in Section \ref{sec:1d_nonlin} can be established. 

\subsection{Linear estimates}\label{sec:1d_lin}
Towards linear decay estimates, for $\lambda>0$ we define the analytic norms
 \begin{equation*}
     \mathcal{N}^\lambda_{v}[\phi_h(t)]:=\sum_{n=0}^\infty \frac{\lambda^n}{n!} \ip{t}^n\norm{\partial_x^n\phi_h(t)}_{L^\infty_x(\R)},\qquad
     \mathcal{M}^\lambda_{v}[h(t)]:=\sum_{n=0}^\infty \frac{\lambda^n}{n!} \norm{\partial_v^n h(t)}_{L^2_x L^\infty_v},
 \end{equation*}
where $\phi_h$ is defined as in \eqref{sec2:greenconv}. 
We highlight that the order of the Lebesgue spaces in $(x,v)$ in the definition of the $\mathcal{M}$-norm is reversed with respect to the $Z$-norm defined in \eqref{znorm}. This allows for simpler linear estimates, but we underline that in the proof of Theorem \ref{thm:1d} it will only be used via embeddings in the $L^2$-based energy norms: we observe that
 \begin{align*}
\mathcal{M}^\lambda_v[h]=\sum_{n=0}^\infty \frac{\lambda^n}{n!}\norm{\partial^n_vh}_{L^2_x L^\infty_v} \lesssim \sum_{n=0}^\infty \frac{\lambda^n}{n!}\norm{\partial^n_vh}^{\frac12}_{L^2_{x,v}}\norm{\partial^{n+1}_v h}^{\frac12}_{L^2_{x,v}}\lesssim\mathcal{E}^\lambda_v[h]^{\frac12}\mathcal{E}^\lambda_v[\partial_v h]^{\frac12}.
 \end{align*}
For future reference we also record some interpolation estimates for mixed derivatives, namely
\begin{equation}
    \label{sec7:interp}
\begin{aligned}
        \mathcal{E}^\lambda_x[\partial_v h]=\sum_{n=0}^\infty \frac{\lambda^{n}}{n!}\norm{\partial_v \partial_x^n h}_{L^2_{x,v}}
        &\le
        \sum_{n=0}^\infty \frac{\lambda^{n}}{n! }\norm{\partial_x^{n+1}h}_{L^2_{x,v}}^{\frac{n}{n+1}} \norm{\partial_v^{n+1} h}^{\frac{1}{n+1}}_{L^2_{x,v}}\\
        &\le\sum_{n=0}^\infty \frac{\lambda^{n}}{n!}
        \left(
        \frac{n}{n+1}\norm{\partial_x^{n+1}h}_{L^2_{x,v}}
        +
        \frac{1}{n+1}\norm{\partial_v^{n+1} h}_{L^2_{x,v}}
        \right)\\
        &\le  \mathcal{E}^\lambda_x[\partial_x h]+ \mathcal{E}^\lambda_v[\partial_v h],
    \end{aligned}
    \end{equation}
and analogously    
    \begin{align}
    \label{sec7:interp1}
        \mathcal{E}^\lambda_x[\partial_v^2 h]\le\mathcal{E}^\lambda_x[\partial^2_x h]+\mathcal{E}^\lambda_v[\partial^2_v h].
    \end{align}    

Our decay estimates read as follows.
\begin{lemma}\label{lem:1d_decay}
For $\lambda>0$, there holds that
        \begin{align}
        \mathcal{N}^\lambda_v[\ip{t}\partial_x\phi_h(t)]&\lesssim \ip{t}^{-1/2}\left(\mathcal{M}^\lambda_v[h]\mathcal{E}^\lambda_v[\partial_vh]+\mathcal{E}^\lambda_x[\partial_xh]\mathcal{E}^\lambda_x[h]\right), \label{sec7:bound1N}\\
        \mathcal{N}^\lambda_v[\ip{t}^2\partial_x^2\phi_h(t)]&\lesssim \ip{t}^{-1/2}\mathcal{M}^\lambda_v[ h]\mathcal{E}^\lambda_v[\partial^2_v h]\!+\! \ip{t}^{-1}\!\left(\mathcal{M}^\lambda_v[\partial_v h]^2+\mathcal{E}^\lambda_x[\partial_x^2h]\mathcal{E}^\lambda_x[h]\right)
        \!+\!\ip{t}^{-\frac32} \mathcal{E}_x^\lambda[\partial_x h]^2.\quad
        \label{sec7:bound2N}.
          \end{align}
\end{lemma}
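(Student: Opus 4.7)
The plan is to reduce estimates on $\norm{\partial_x^m\phi_h}_{L^\infty_x}$ to estimates on $\norm{\partial_x^m\rho_h}_{L^p_x}$ via Young's inequality, using that the one-dimensional Green's function $G_1(x) = \tfrac{1}{2}e^{-\abs{x}}$ lies in both $L^1(\R)$ and $L^2(\R)$. Both bounds
\[
\norm{\partial_x^m\phi_h}_{L^\infty_x}\lesssim \norm{\partial_x^m\rho_h}_{L^2_x} \qquad\text{and}\qquad \norm{\partial_x^m\phi_h}_{L^\infty_x}\lesssim \norm{\partial_x^m\rho_h}_{L^\infty_x}
\]
will therefore be available, and I will select whichever produces the desired time decay in each term of the Leibniz expansion.

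The key algebraic input consists of two parallel representations of $\partial_y^m\rho_h(y,t)$: a \emph{direct} one, obtained via Leibniz' rule applied to $h^2(y-tw,w)$, in which each derivative remains a $\partial_x$-derivative of $h$; and a \emph{dispersive} one, obtained by first changing variables $a=y-tw$ in the formula for $\rho_h$ and then differentiating in $y$ under the integral, in which each derivative becomes a $t^{-1}\partial_v$-derivative of $h$ with an overall prefactor $t^{-m}$. In both representations $\partial_y^m\rho_h$ is a finite sum of bilinear integrals $J_{f,g}(y):=\int f(y-tw,w)g(y-tw,w)\,dw$. The central linear bound, proved via Cauchy--Schwarz in $w$ together with Fubini and the pointwise estimate $\norm{f(y-t\cdot,\cdot)}_{L^2_w}^2 \leq t^{-1}\norm{f}_{L^2_xL^\infty_v}^2$, reads
\[
\norm{J_{f,g}}_{L^2_y}\lesssim t^{-\tfrac12}\norm{f}_{L^2_xL^\infty_v}\norm{g}_{L^2_{x,v}},\qquad \norm{J_{f,g}}_{L^\infty_y}\lesssim t^{-1}\norm{f}_{L^2_xL^\infty_v}\norm{g}_{L^2_xL^\infty_v},
\]
with $f$ and $g$ interchangeable. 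The one-dimensional Sobolev embedding $\norm{h(x,\cdot)}_{L^\infty_v}\lesssim \norm{h(x,\cdot)}_{L^2_v}^{1/2}\norm{\partial_v h(x,\cdot)}_{L^2_v}^{1/2}$ will then be used to pass from $L^2_xL^\infty_v$ to $L^2_{x,v}$ norms where needed, at the cost of one extra $\partial_v$-derivative.

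For \eqref{sec7:bound1N}, I would plug the dispersive representation of $\partial_y^{n+1}\rho_h$ into the $L^2_x$-based Young bound together with the $L^2_y$ bilinear estimate: the prefactor $t^{-(n+1)}$ from the representation multiplied by $t^{-1/2}$ from the bilinear bound cancels the $\ip{t}^{n+1}$ in the analytic sum and leaves $\ip{t}^{-1/2}$. Placing the $L^2_xL^\infty_v$ factor on the term with fewer derivatives and summing the binomial coefficients $\binom{n+1}{k}$ against $\lambda^n/n!$ via the Cauchy product for exponential series -- together with the identity $(AB)'=A'B+AB'$ applied to the generating functions, which accounts for the extra factor of $n+1$ coming from $\binom{n+1}{k}/n!$ -- yields the first contribution $\mathcal{M}^\lambda_v[h]\mathcal{E}^\lambda_v[\partial_v h]$. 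The complementary $\mathcal{E}^\lambda_x[\partial_x h]\mathcal{E}^\lambda_x[h]$ contribution arises upon applying the Sobolev embedding to the remaining $L^2_xL^\infty_v$ factor and then using the interpolation \eqref{sec7:interp} to trade the resulting $\partial_v$-derivatives for $\partial_x$-derivatives in the analytic norms.

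For \eqref{sec7:bound2N}, the same scheme is applied with $m=n+2$. The leading $\ip{t}^{-1/2}\mathcal{M}^\lambda_v[h]\mathcal{E}^\lambda_v[\partial_v^2 h]$ contribution comes from the Leibniz terms in which both additional $\partial_v$-derivatives fall on a single factor. The $\ip{t}^{-1}\mathcal{M}^\lambda_v[\partial_v h]^2$ contribution comes from the terms in which one $\partial_v$-derivative lands on each factor, upgraded by applying the $L^\infty_y$ bilinear estimate together with the $L^1_x$-based Young bound, which gains the additional factor $t^{-1/2}$. The $\ip{t}^{-1}\mathcal{E}^\lambda_x[\partial_x^2 h]\mathcal{E}^\lambda_x[h]$ and $\ip{t}^{-3/2}\mathcal{E}^\lambda_x[\partial_x h]^2$ contributions are obtained from the corresponding terms after Sobolev embedding and the interpolation \eqref{sec7:interp1}. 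I expect the main technical obstacle to be the combinatorial bookkeeping for the analytic sums: the $L^2$ versus $L^\infty$ placement for each term in the Leibniz expansion must be chosen so that the binomial coefficients recombine cleanly with $\lambda^n/n!$ into the desired products of $\mathcal{M}^\lambda_v$, $\mathcal{E}^\lambda_v$, and $\mathcal{E}^\lambda_x$ norms on the right-hand side.
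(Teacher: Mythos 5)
Your treatment of the regime $t\gtrsim 1$ is sound and is essentially the paper's argument in light disguise: the dispersive change of variables turns every $x$-derivative of $\rho_h$ into $t^{-1}\partial_v$ on $h$, Cauchy--Schwarz places one factor in $L^2_xL^\infty_v$ and the other in $L^2_{x,v}$ (or both in $L^2_xL^\infty_v$ for the $L^\infty$-based bound), and the binomial coefficients are resummed into products of $\mathcal{M}^\lambda_v$ and $\mathcal{E}^\lambda_v$ norms via the same splitting of $\binom{n+1}{i}/n!$ that the paper uses. Replacing the paper's heat-kernel representation of $G_1$ by Young's inequality with $G_1(x)=\frac12 e^{-\abs{x}}\in L^1\cap L^2$ is only a cosmetic difference.

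The genuine gap is in your account of the $\mathcal{E}^\lambda_x$ terms. In the dispersive representation only $\partial_v$-derivatives of $h$ ever appear, and no Sobolev embedding followed by \eqref{sec7:interp} or \eqref{sec7:interp1} can convert them into $x$-derivative norms: those interpolation inequalities bound the \emph{mixed} quantities $\mathcal{E}^\lambda_x[\partial_v h]$, $\mathcal{E}^\lambda_x[\partial_v^2 h]$ by sums of pure-derivative norms; they do not permit trading $\mathcal{E}^\lambda_v$-type factors for $\mathcal{E}^\lambda_x$-type ones, and no such trade can hold in general. Indeed the claimed route cannot be repaired, since at $t=0$ (and for $t\lesssim 1$) the bound must involve $x$-regularity of $h$: $\partial_x^{n+1}\rho_h(\cdot,0)$ genuinely contains $x$-derivatives of $h$, while your bilinear constants $t^{-\frac12}$, $t^{-1}$ blow up as $t\to 0$, so the purely $v$-derivative terms on the right of \eqref{sec7:bound1N}--\eqref{sec7:bound2N} cannot control short times on their own. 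The $\mathcal{E}^\lambda_x$ terms (whose prefactors $\ip{t}^{-1}$, $\ip{t}^{-\frac32}$ are harmless there) exist precisely to cover $0\le t\le 1$, and they must come from the \emph{direct} representation, with all derivatives kept as $\partial_x$ on $h$ and $\phi_h=G_1\ast\rho_h$ estimated via $G_1\in L^1\cap L^2$ -- this is exactly the paper's short-time case. Conversely, for $t\ge 1$ the direct representation is useless for the opposite reason: it carries no factor $t^{-(n+1)}$ to offset the weights $\ip{t}^{n+1}$ in $\mathcal{N}^\lambda_v$. So the fix is a case split in time ($t\le 1$: direct representation; $t\ge 1$: dispersive representation), not the interpolation step you describe; with that correction your proposal matches the paper's proof.
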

\begin{proof}
We begin by proving \eqref{sec7:bound1N}. Recalling that $\phi_h=G_1\ast\rho_h$ with the Green function $G_1$ from \eqref{green}, when $0\leq t\leq 1$ we directly obtain that
\begin{equation*}
    \mathcal{N}^\lambda_v[\ip{t}\partial_x\phi_h(t)]\lesssim \mathcal{E}^\lambda_x[\partial_xh]\mathcal{E}^\lambda_x[h].
\end{equation*}
We henceforth assume that $t\geq 1$. With the standard dispersive change of variables we have that
\begin{equation*}
    \begin{aligned}
        \phi_h(x,t)
        &=\iint G_1(y)h^2(x-y-tw,w,t)dydw=t^{-1}\iint G_1(y) h^2\left(a, \frac{x-y-a}{t},t\right)dyda
    \end{aligned}
    \end{equation*}
and thus
\begin{equation*}
\begin{aligned}
   t^{n+1}\partial_x^{n+1} \phi_h(x,t)=t^{-1}\sum_{i=0}^{n+1}\binom{n+1}{i}\int_0^\infty &
        \iint_{\R\times \R}\chi(r^{-\frac12}z) \partial_v^{i}h\left( a, \frac{x-z-a}{t},t\right)\\
        & \qquad \cdot\partial_v^{n+1-i}h\left( a, \frac{x-z-a}{t},t\right) 
        dz da 
        \frac{e^{-r}}{\sqrt{4\pi r}}dr. 
\end{aligned}        
\end{equation*}
It follows with the splitting $n+1=(n+1-i)+i$ that 
\begin{equation*}
        \frac{\lambda_t^n}{n!}t^{n+1}
        \partial_x^{n+1} \phi_h(x,t)= I_1+I_2,
\end{equation*}
where
\begin{equation*}
\begin{aligned}
    I_1&:= t^{-1}\sum_{i=1}^{n+1}
        \iint G_1(z) \frac{\lambda^{i-1}}{(i-1)!}\partial_v^{i}h\left( a, \frac{x-z-a}{t},t\right)
         \frac{\lambda_t^{n+1-i}}{(n+1-i)!}\partial_v^{n+1-i}h\left( a, \frac{x-z-a}{t},t\right) 
        dz da,\\
    I_2&:= t^{-1}\sum_{i=0}^{n}
        \iint G_1(z) \frac{\lambda^i}{i!}\partial_v^{i}h\left( a, \frac{x-z-a}{t},t\right)
         \frac{\lambda_t^{n-i}}{(n-i)!}\partial_v^{n+1-i}h\left( a, \frac{x-z-a}{t},t\right) 
        dz da.
\end{aligned}        
\end{equation*}
By taking the $\norm{\cdot}_{L^2_x L^\infty_v}$ norm in the terms in $I_1, I_2$ with the same amount of derivatives as in the corresponding factorial, we conclude that
\begin{align*}
    \sum_{n=0}^\infty &\frac{\lambda^n}{n!}\norm{\ip{t}^{n+1}\partial_x^{n+1}\phi_h(t)}_{L^\infty_x(\R)}
    \\
    &\leq
     t^{-\frac12} \sum_{n=0}^\infty \lambda^n \sum_{i=1}^{n+1}
    \frac{\|\partial_v^i h\|_{L^2_{x,v}}}{(i-1)!}
    \frac{\|\partial_v^{n+1-i} h\|_{L^2_x L^\infty_v}}{(n+1-i)!}
    +
     t^{-\frac12} \sum_{n=0}^\infty \lambda^n \sum_{i=0}^{n}
    \frac{\|\partial_v^i h\|_{L^2_x L^\infty_v}}{i!}
    \frac{\|\partial_v^{n+1-i} h\|_{L^2_{x,v}}}{(n-i)!}\\
    &\lesssim t^{-\frac12}\mathcal{M}^\lambda_v[h]\mathcal{E}^\lambda_v[\partial_vh],
\end{align*}
where in the last inequality we inverted the order in the summations. This implies the claim.

The estimate \eqref{sec7:bound2N} follows analogously, using for $0\leq t\leq 1$ that
\begin{equation*}
  \mathcal{N}^\lambda_v[\ip{t}^2\partial_x^2\phi_h(t)]\lesssim \mathcal{E}^\lambda_x[\partial_x^2h]\mathcal{E}^\lambda_x[h]+\mathcal{E}^\lambda_x[\partial_xh]^2  
\end{equation*}
and for $t\geq 1$ that
     \begin{align*}
        t^{n+2}\partial_x^{n+2} \phi_h(x,t)=t^{-1}\sum_{i=0}^{n+2}
        \binom{n+2}{i}
        \int_0^\infty &\iint_{\R\times \R}\chi(r^{-\frac12}z)  \partial_v^{i}h\left( a, \frac{x-z-a}{t},t\right)\\
        & \quad \cdot \partial_v^{n+2-i}h\left( a, \frac{x-z-a}{t},t\right) dz da \frac{e^{-r}}{\sqrt{4\pi r}}dr.
    \end{align*}
    \end{proof}

\subsection{Nonlinear analysis}\label{sec:1d_nonlin}

We establish the following bootstrap, which readily implies Theorem \ref{thm:1d}.

\begin{proposition}[Bootstrap estimates in $d=1$]
\label{sec7:boot}
There exists $\eps_0>0$ such that if $\gamma$ is a solution to \eqref{eq:gamma} satisfying the initial data assumptions \eqref{eq:1d_assumptions} and 
\begin{equation}
    \label{sec7:bootass}
        \sum_{a\in\{0,1\}}\left(\mathcal{E}^{\lambda_t}_v[\partial_v^a\gamma(t)]+ \mathcal{E}_x^{\lambda_t}[\partial_x^a\gamma(t)]\right)\lesssim \eps\leq \eps_0,\qquad 0\leq t\leq T,
\end{equation}
where $T\gtrsim R^2\eps^{-4}$, then we have the improved bound
\begin{equation*}
\sum_{a\in\{0,1\}}\left(\mathcal{E}^{\lambda_t}_v[\partial_v^a\gamma(t)]+ \mathcal{E}_x^{\lambda_t}[\partial_x^a\gamma(t)]\right) \le \eps + C \eps^3,\qquad 0\leq t\leq T. 
\end{equation*}
\end{proposition}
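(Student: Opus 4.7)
The plan is to derive a differential inequality for the combined analytic energy
\begin{equation*}
\mathcal{E}(t) := \sum_{a\in\{0,1\}}\bigl(\mathcal{E}^{\lambda_t}_v[\partial_v^a\gamma(t)] + \mathcal{E}^{\lambda_t}_x[\partial_x^a\gamma(t)]\bigr),
\end{equation*}
with associated ``dissipation''
\begin{equation*}
\mathcal{D}(t) := \sum_{a\in\{0,1\}}\bigl(\mathcal{E}^{\lambda_t}_v[\partial_v^{a+1}\gamma(t)] + \mathcal{E}^{\lambda_t}_x[\partial_x^{a+1}\gamma(t)]\bigr),
\end{equation*}
of the form $\tfrac{d}{dt}\mathcal{E}(t) + |\dot\lambda_t|\mathcal{D}(t) \leq C_1\eps^2\ip{t}^{-1/2}\mathcal{D}(t)$. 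The mechanism is that differentiating each summand of $\mathcal{E}$ in time produces exactly the favourable term $\dot\lambda_t$ times the corresponding piece of $\mathcal{D}$ (since $\mathcal{D}_v^{\lambda_t}[\partial_v^a\gamma]=\mathcal{E}_v^{\lambda_t}[\partial_v^{a+1}\gamma]$, etc.); choosing the universal constant $C\geq 4C_1$ in the definition of $\lambda_t$ then allows the coercive term to absorb all nonlinear contributions, yielding $\mathcal{E}(t)\leq \mathcal{E}(0)\leq \eps$ on the maximal interval where $\lambda_t>0$, which gives $T\gtrsim R^2\eps^{-4}$ and in particular the stated bound $\eps+C\eps^3$.

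For each individual energy, the first step is to apply $\partial_v^{n+a}$ or $\partial_x^{n+a}$ to the profile equation \eqref{eq:gamma} and expand via Leibniz, using the identity $\partial_v[\partial_x\phi(x+tv,t)] = t\partial_x^2\phi(x+tv,t)$. The $k=0$ Leibniz contribution corresponds to transport by the divergence-free field $(-t\partial_x\phi,\partial_x\phi)\circ(x+tv,\cdot)$ and vanishes after pairing with the target and integrating, exactly as in the proof of Lemma \ref{sec2:bootstrap}. For $k\geq 1$, Cauchy--Schwarz in $(x,v)$ and division by one factor of the target $L^2$-norm produce
\begin{equation*}
\tfrac{d}{dt}\norm{\partial_v^{n+a}\gamma}_{L^2} \leq \sum_{k=1}^{n+a}\binom{n+a}{k}t^k\norm{\partial_x^{k+1}\phi(t)}_{L^\infty_x}\bigl(\norm{\partial_v^{n+a-k+1}\gamma}_{L^2} + t\norm{\partial_v^{n+a-k}\partial_x\gamma}_{L^2}\bigr).
\end{equation*}
Summing over $n\geq 0$ against $\lambda_t^n/n!$ and changing indices via $j=n-k$ (together with $\binom{n+1}{k}=\binom{n}{k}+\binom{n}{k-1}$ in the case $a=1$) factorises each double sum into the product of an analytic norm of $\partial_x\phi$ (or $\partial_x^2\phi$) and an $\mathcal{E}$-norm of $\gamma$ at one higher derivative order.

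The first (analytic-in-$\phi$) factor is controlled via Lemma \ref{lem:1d_decay}: using $(t/\ip{t})^{k+1}\leq \ip{t}^{-1}$ uniformly, one has
\begin{equation*}
\sum_{k\geq 1}\tfrac{(\lambda_t t)^k}{k!}\norm{\partial_x^{k+1}\phi(t)}_{L^\infty_x}\leq \ip{t}^{-1}\mathcal{N}^{\lambda_t}_v[\ip{t}\partial_x\phi(t)]\lesssim \eps^2\ip{t}^{-3/2},
\end{equation*}
and analogously the $\partial_x^2\phi$ sum obeys $\lesssim \ip{t}^{-5/2}\eps^2$ via \eqref{sec7:bound2N}, the required embeddings $\mathcal{M}_v^{\lambda_t}[h]\lesssim \mathcal{E}_v^{\lambda_t}[h]^{1/2}\mathcal{E}_v^{\lambda_t}[\partial_v h]^{1/2}\lesssim\eps$ (and analogous ones at one higher derivative, via the identity $\mathcal{E}_v^\lambda[\partial_v^2\gamma]=\partial_\lambda\mathcal{E}_v^\lambda[\partial_v\gamma]$ and a slight analyticity-shift argument) following from the bootstrap \eqref{sec7:bootass}. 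In the second (analytic-in-$\gamma$) factor, the pure-derivative energies $\mathcal{E}^{\lambda_t}_v[\partial_v^{a+1}\gamma]$ and $\mathcal{E}^{\lambda_t}_x[\partial_x^{a+1}\gamma]$ are precisely components of $\mathcal{D}(t)$, while mixed-derivative quantities are bounded by a sum of two components of $\mathcal{D}(t)$ via \eqref{sec7:interp}--\eqref{sec7:interp1}. Combined with the elementary bounds $t\ip{t}^{-3/2}\leq \ip{t}^{-1/2}$ and $t^2\ip{t}^{-5/2}\leq \ip{t}^{-1/2}$ (which absorb every power of $t$ introduced by the transport operator $\partial_v - t\partial_x$), this yields the desired differential inequality after summing the four energy estimates.

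The main obstacle is the combinatorial bookkeeping of the Leibniz-induced double series, and in particular the verification that all the $\gamma$-energies surviving the factorisation can be reabsorbed into $\mathcal{D}(t)$ via interpolation. The delicate point in the $a=1$ case is the auxiliary contribution arising from $\binom{n}{k-1}$, which involves $\partial_x^{k+2}\phi$ and therefore requires the sharper decay estimate \eqref{sec7:bound2N}; this is the one place where the Cauchy--Kovalevskaya mechanism (derivative loss compensated by the shrinking radius of analyticity) is essentially used, and justifies why the lifespan scales as $T\gtrsim R^2\eps^{-4}$ and no better within this functional setting.
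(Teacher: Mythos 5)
Your overall skeleton (time-differentiation of the analytic energies, Leibniz expansion with the $k=0$ term vanishing by the divergence structure, factorisation into analytic norms of $\phi$ times energies of $\gamma$ at one higher derivative, interpolation via \eqref{sec7:interp}--\eqref{sec7:interp1}, and absorption by the shrinking radius $\lambda_t$) is the same as the paper's. However, one step as written is not justified: you claim that the $\partial_x^2\phi$ factor obeys a pure $\eps^2$ bound (your ``$\lesssim \ip{t}^{-5/2}\eps^2$ for the $\partial_x^2\phi$ sum'', i.e.\ $\mathcal{N}^{\lambda_t}_v[\ip{t}^2\partial_x^2\phi]\lesssim \eps^2\ip{t}^{-1/2}$), and you justify the needed input $\mathcal{E}^{\lambda_t}_v[\partial_v^2\gamma]\lesssim\eps$ by the identity $\mathcal{E}^{\lambda}_v[\partial_v^2\gamma]=\partial_\lambda\mathcal{E}^{\lambda}_v[\partial_v\gamma]$ and a ``slight analyticity-shift argument'' from \eqref{sec7:bootass}. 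The bootstrap \eqref{sec7:bootass} only controls $\mathcal{E}^{\lambda_t}_v[\partial_v^a\gamma]$, $a\in\{0,1\}$, at the radius $\lambda_t$ itself; a shift gives at best $\mathcal{E}^{\lambda_t-\delta}_v[\partial_v^2\gamma]\lesssim \eps/\delta$ at a \emph{strictly smaller} radius, while the Leibniz/Cauchy--Schwarz factorisation forces the $\phi$-factor to be measured at the same radius $\lambda_t$ as the energies you are propagating, so this cannot be fed back in. Indeed, if $\mathcal{E}^{\lambda_t}_v[\partial_v^2\gamma]\lesssim\eps$ were a consequence of \eqref{sec7:bootass}, the derivative loss would disappear and the Cauchy--Kovalevskaya mechanism (which you yourself identify as essential) would be superfluous. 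The quantity $\mathcal{E}^{\lambda_t}_v[\partial_v^2\gamma]$ is precisely a component of your dissipation $\mathcal{D}(t)$ and is \emph{not} known to be $O(\eps)$ along the bootstrap.

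The repair is exactly what the paper does: use \eqref{sec7:bound2N} together with \eqref{sec7:bootass} only in the weaker form \eqref{sec7:boot:dec},
\begin{equation*}
\mathcal{N}^{\lambda_t}_v[\ip{t}^2\partial_x^2\phi(t)]\lesssim \eps\,\ip{t}^{-1/2}\left(\mathcal{E}^{\lambda_t}_x[\partial_x^2\gamma(t)]+\mathcal{E}^{\lambda_t}_v[\partial_v^2\gamma(t)]\right)+\eps^2\ip{t}^{-3/2},
\end{equation*}
i.e.\ carry the second-order energies linearly. In every place where this factor appears in your sums, the companion $\gamma$-factor is a first-order analytic energy bounded by $\eps$ from \eqref{sec7:bootass}, so the product still lands in $C_1\eps^2\ip{t}^{-1/2}\mathcal{D}(t)$ plus an integrable remainder of size $\eps^3\ip{t}^{-3/2}$; this is precisely the differential inequality you aim for, and it closes by the absorption you describe. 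Note only that because of these cubic, time-integrable remainders the conclusion is $\mathcal{E}(t)\le \eps + C\eps^3$ (as in the statement of Proposition \ref{sec7:boot}) rather than the strict monotonicity $\mathcal{E}(t)\le\mathcal{E}(0)$ you assert; with this correction your argument coincides with the paper's proof.
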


Since henceforth all our norms will depend on the same time-dependent analyticity parameter
\begin{equation*}
    \lambda=\lambda_t:=R-C\eps^2\ip{t}^{\frac12}
\end{equation*}
as in Proposition \ref{sec7:boot} and Theorem \ref{thm:1d}, hereafter we will suppress it to not overburden the notation.

\begin{proof}
We first notice that by the bootstrap assumptions \eqref{sec7:bootass}, with Lemma \ref{lem:1d_decay} we have the following decay bounds,
\begin{equation}
\label{sec7:boot:dec}
\begin{aligned}
        \mathcal{N}_v[\ip{t}\partial_x\phi(t)]&\lesssim \varepsilon^2\ip{t}^{-1/2}, \\
        \mathcal{N}_v[\ip{t}^2\partial_x^2\phi(t)]&\lesssim 
        \varepsilon \ip{t}^{-1/2}
        \left(\mathcal{E}_x[\partial_x^2\gamma(t)]+\mathcal{E}_v[\partial_v^2\gamma(t)]\right)+\varepsilon^2\ip{t}^{-\frac32}.
\end{aligned}
\end{equation}

\emph{I. Control of $\mathcal{E}_x[\gamma]+\mathcal{E}_v[\gamma]$.}
We differentiate
\begin{equation*}
    \mathcal{E}_x[\gamma(t)]+\mathcal{E}_v[\gamma(t)]= \sum_{n=0}^\infty \frac{\lambda^n_t}{n!}\left(
    \norm{\partial_x^n \gamma(t)}_{L^2_{x,v}}+
    \norm{\partial_v^n \gamma(t)}_{L^2_{x,v}}
    \right)
\end{equation*}
to get that
 \begin{align}\label{eq:dtEv1}
        \frac{d}{dt} \left(
        \mathcal{E}_x[\gamma(t)]+\mathcal{E}_v[\gamma(t)]\right)
      &=\dot\lambda_t \left(\mathcal{E}_x[\partial_x \gamma(t)]+\mathcal{E}_{v}[\partial_v \gamma(t)]\right)
      +\mathcal{A}_x + \mathcal{A}_v
      \end{align}
      with
      \begin{equation}
      \label{sec7:a1a2}
      \mathcal{A}_x:=\sum_{n=1}^\infty \frac{\lambda^n_t}{n!}\frac{d}{dt} \norm{\partial_x^{n}\gamma(t)}_{L^2_{x,v}},\qquad
      \mathcal{A}_v:=\sum_{n=1}^\infty \frac{\lambda^n_t}{n!}\frac{d}{dt} \norm{\partial_v^{n}\gamma(t)}_{L^2_{x,v}},
        \end{equation}
        where we used that $\norm{\gamma}_{L^2_{x,v}}$ is conserved. 
        
        Concerning $\mathcal{A}_x$ in \eqref{sec7:a1a2},

        from the equation \eqref{eq:gamma} for $\gamma$ we directly obtain that
    \begin{align*}
        \frac12\frac{d}{dt} \norm{\partial_x^{n} \gamma}^2_{L^2_{x,v}}&=\sum_{\substack{j=1}}^{n}
        \binom{n}{j}
        \iint 
        \partial_x^{n}\gamma \partial_x^{j+1}\phi(x+tv,t)\partial_v\partial_x^{n-j} \gamma dx dv\\
        &\qquad -t\sum_{\substack{j=1}}^{n}
        \binom{n}{j}\iint 
        \partial_x^{n}\gamma \partial_x^{j+1}\phi(x+tv,t)\partial_x^{n-j+1} \gamma dx dv\\
        &=:\mathcal{B}_1+\mathcal{B}_2,
        \end{align*}
  with bounds
        \begin{align}
        \label{sec7:stimax1}
        \mathcal{B}_1&\le n! \norm{\partial_x^{n}\gamma}_{L^2_{x,v}} \sum_{j=1}^n \frac{\norm{\partial_x^{j+1}\phi}_{L^\infty_x}}{j!}
        \frac{\norm{\partial_v \partial_x^{n-j}\gamma}_{L^2_{x,v}}}{(n-j)!},
    \end{align}
and 
    \begin{align}
        \label{sec7:stimax2}
        \mathcal{B}_2&\le t n! \norm{\partial_x^{n}\gamma}_{L^2_{x,v}} \sum_{j=1}^n \frac{\norm{\partial_x^{j+1}\phi}_{L^\infty_x}}{j!}
        \frac{\norm{\partial_x^{n-j+1}\gamma}_{L^2_{x,v}}}{(n-j)!}.
    \end{align}
    Collecting \eqref{sec7:stimax1} and \eqref{sec7:stimax2}, we arrive at
    \begin{align*}
        \mathcal{A}_x
        &\le 
        \left(\sum_{j=1}^\infty \frac{\lambda_t^j}{j!} \norm{\partial_x^{j+1}\phi}_{L^\infty_x}\right)
        \left(\sum_{n=0}^\infty \frac{\lambda_t^{n}}{n!}\norm{\partial_v \partial_x^n \gamma}_{L^2_{x,v}}\right) \\
        &\qquad + t \left(\sum_{j=1}^\infty \frac{\lambda_t^j}{j!} \norm{\partial_x^{j+1}\phi}_{L^\infty_x}\right)
        \left(\sum_{n=0}^\infty \frac{\lambda_t^{n}}{n!}\norm{ \partial_x^{n+1} \gamma}_{L^2_{x,v}}\right)\\
        &\leq \ip{t}^{-2}\mathcal{N}_v[\ip{t}\partial_x\phi(t)]\mathcal{E}_x[\partial_v \gamma(t)]+\ip{t}^{-1}\mathcal{N}_v[\ip{t}\partial_x\phi(t)]\mathcal{E}_x[\partial_x \gamma(t)].
    \end{align*}
    With the interpolation estimates in \eqref{sec7:interp} and the decay bounds \eqref{sec7:boot:dec}, this yields
    \begin{equation}
    \label{sec7:0xvpart1}
        \mathcal{A}_x \lesssim
        \left(\ip{t}^{-\frac52}+
        \ip{t}^{-\frac32}
        \right)\varepsilon^3.
    \end{equation}
    
We next study $\mathcal{A}_v$. We have
    \begin{align*}
        \frac12\frac{d}{dt} \norm{\partial_v^{n} \gamma}^2_{L^2_{x,v}}&=\sum_{\substack{j=1}}^{n}
        \binom{n}{j}
        \iint 
        \partial_v^{n}\gamma t^j\partial_x^{j+1}\phi(x+tv,t)\partial_v^{n-j+1} \gamma dx dv\\
        &\qquad -\sum_{\substack{j=1}}^{n}
        \binom{n}{j}\iint 
        \partial_v^{n}\gamma (t\partial_x)^{j+1}\phi(x+tv,t)\partial_x\partial_v^{n-j} \gamma dx dv\\
        &=:\mathcal{C}_1+\mathcal{C}_2,
        \end{align*}
  with bounds
        \begin{align}
        \label{sec7:estv}
        \mathcal{C}_1&\le n! \norm{\partial_v^{n}\gamma}_{L^2_{x,v}} \sum_{j=1}^n \frac{\norm{\ip{t}^j\partial_x^{j+1}\phi}_{L^\infty_x}}{j!}
        \frac{\norm{\partial_v^{n-j+1}\gamma}_{L^2_{x,v}}}{(n-j)!},
    \end{align}
and 
    \begin{align}
        \label{sec7:estv2}
        \mathcal{C}_2&\le n! \norm{\partial_v^{n}\gamma}_{L^2_{x,v}} \sum_{j=1}^n \frac{\norm{\ip{t}^{j+1}\partial_x^{j+1}\phi}_{L^\infty_x}}{j!}
        \frac{\norm{\partial_x\partial_v^{n-j}\gamma}_{L^2_{x,v}}}{(n-j)!}.
    \end{align}
    Collecting \eqref{sec7:estv} and \eqref{sec7:estv2}, we arrive at
    \begin{align*}
       \mathcal{A}_v
        &\le 
        \left(\sum_{j=1}^\infty \frac{\lambda_t^j}{j!} \norm{\ip{t}^j\partial_x^{j+1}\phi}_{L^\infty_x}\right)
        \left(\sum_{n=0}^\infty \frac{\lambda_t^{n}}{n!}\norm{\partial_v^{n+1} \gamma}_{L^2_{x,v}}\right) \\
        &\qquad + \left(\sum_{j=1}^\infty \frac{\lambda_t^j}{j!} \norm{\ip{t}^{j+1}\partial_x^{j+1}\phi}_{L^\infty_x}\right)
        \left(\sum_{n=0}^\infty \frac{\lambda_t^{n}}{n!}\norm{ \partial_x\partial_v^{n} \gamma}_{L^2_{x,v}}\right)\\
        &\leq \ip{t}^{-1}\mathcal{N}_v[\ip{t}\partial_x\phi(t)]\mathcal{E}_v[\partial_v \gamma(t)]+\mathcal{N}_v[\ip{t}\partial_x\phi(t)]\mathcal{E}_v[\partial_x \gamma(t)].
    \end{align*}

By the decay bounds \eqref{sec7:boot:dec} and the interpolation estimate 
    \begin{align*}
        \mathcal{E}_v[\partial_x \gamma]\le\mathcal{E}_v[\partial_v \gamma]+\mathcal{E}_x[ \partial_x\gamma],
    \end{align*}
we conclude
\begin{equation}
\label{sec7:0xvpart2}
    \mathcal{A}_v \lesssim \ip{t}^{-\frac32}\varepsilon^3 + \ip{t}^{-\frac12}\varepsilon^2\left(
    \mathcal{E}_v[\partial_v \gamma(t)]+\mathcal{E}_x[ \partial_x\gamma(t)]
    \right).
\end{equation}

Inserting \eqref{sec7:0xvpart1} and \eqref{sec7:0xvpart2} in \eqref{eq:dtEv1}, there holds that
\begin{equation*}
   \frac{d}{dt} \left(\mathcal{E}_x[ \gamma]+\mathcal{E}_v[ \gamma]\right)
      \le(\dot\lambda_t+C\eps^2\ip{t}^{-\frac12}) \left(\mathcal{E}_{x}[\partial_x \gamma]+ \mathcal{E}_{v}[\partial_v \gamma]\right)
      + C\varepsilon^3 \ip{t}^{-\frac32}
      \le C\varepsilon^3 \ip{t}^{-\frac32}
\end{equation*}
upon choosing $\lambda_t=R-C\eps^2\ip{t}^{\frac12}$ as in the statement. This gives the claim.

\medskip
\emph{II. Control of $\mathcal{E}_x[\partial_x \gamma]+\mathcal{E}_v[\partial_v \gamma]$.}

We now study 
\begin{equation*}
    \mathcal{E}_x[\partial_x \gamma]+\mathcal{E}_v[\partial_v \gamma]= \sum_{n=0}^\infty \frac{\lambda^n_t}{n!}
    \left(
    \norm{\partial_x^{n+1} \gamma}_{L^2_{x,v}}
    +
    \norm{\partial_v^{n+1} \gamma}_{L^2_{x,v}}
    \right),
\end{equation*}
for which
 \begin{align}
 \label{sec7:eqdxg}
        \frac{d}{dt} \left(
        \mathcal{E}_x[\partial_x \gamma]
        +
        \mathcal{E}_v[\partial_v \gamma]
        \right)
      &=\dot\lambda_t \left(
      \mathcal{E}_{x}[\partial^2_x \gamma]
      + \mathcal{E}_{v}[\partial_v^2 \gamma]
      \right)+\mathcal{T}_x + \mathcal{T}_v
        \end{align}
with
     \begin{equation}
     \label{sec7:t1t2}
        \mathcal{T}_x:=\sum_{n=0}^\infty \frac{\lambda^n_t}{n!}
      \frac{d}{dt} \norm{\partial_x^{n+1}\gamma}_{L^2_{x,v}},\qquad \mathcal{T}_v:=\sum_{n=0}^\infty \frac{\lambda^n_t}{n!}
      \frac{d}{dt} \norm{\partial_v^{n+1}\gamma}_{L^2_{x,v}}. 
     \end{equation}
     
    Concerning $\mathcal{T}_x$ in \eqref{sec7:t1t2}, we get
        \begin{align*}
        \frac12\frac{d}{dt} \norm{\partial_x^{n+1} \gamma}^2_{L^2_{x,v}}&=\sum_{\substack{j=1}}^{n+1}
        \binom{n+1}{j}
        \iint 
        \partial_x^{n+1}\gamma \partial_x^{j+1}\phi(x+tv,t)\partial_v\partial_x^{n+1-j} \gamma dx dv\\
        &\qquad+t\sum_{\substack{j=1}}^{n+1}
        \binom{n+1}{j}
        \iint 
        \partial_x^{n+1}\gamma \partial_x^{j+1}\phi(x+tv,t)\partial_x^{n+2-j} \gamma dx dv.
        \end{align*}
    It follows that
\begin{align*}
        \sum_{n=0}^\infty 
        \frac{\lambda^n_t}{n!}\frac{d}{dt} \norm{\partial_x^{n+1}\gamma}_{L^2_{x,v}} 
        &\le 
        \ip{t}^{-2}\left(\sum_{j=0}^\infty \frac{\lambda_t^{j}}{j!} \norm{\ip{t}^{j+2}\partial_x^{j+2}\phi}_{L^\infty_x}\right)
        \left(\sum_{n=0}^\infty \frac{\lambda_t^{n}}{n!}\norm{\partial_v \partial_x^n \gamma}_{L^2_{x,v}}\right) \\
        &\qquad + \ip{t}^{-1}\left(\sum_{j=0}^\infty \frac{\lambda_t^j}{j!} \norm{\ip{t}^{j+1}\partial_x^{j+1}\phi}_{L^\infty_x}\right)
        \left(\sum_{n=0}^\infty \frac{\lambda_t^{n}}{n!}\norm{\partial_v \partial_x^{n+1} \gamma}_{L^2_{x,v}}\right) \\
        &\qquad +  \ip{t}^{-1}\left(\sum_{j=0}^\infty \frac{\lambda_t^j}{j!} \norm{\ip{t}^{j+2}\partial_x^{j+2}\phi}_{L^\infty_x}\right)
        \left(\sum_{n=0}^\infty \frac{\lambda_t^{n}}{n!}\norm{ \partial_x^{n+1} \gamma}_{L^2_{x,v}}\right) \\
        &\qquad+ \left(\sum_{j=0}^\infty \frac{\lambda_t^j}{j!} \norm{\ip{t}^{j+1}\partial_x^{j+1}\phi}_{L^\infty_x}\right)
        \left(\sum_{n=0}^\infty \frac{\lambda_t^{n}}{n!}\norm{ \partial_x^{n+2} \gamma}_{L^2_{x,v}}\right)\\
        &=:B_1+B_2+B_3+B_4.
    \end{align*}

    By the interpolation estimates in \eqref{sec7:interp} and \eqref{sec7:interp1}, we get
    \begin{align*}
        B_1+B_2 \le \ip{t}^{-2}\mathcal{N}_v[\ip{t}^2\partial_x^2 \phi]\left(
        \mathcal{E}_x[\partial_x \gamma]+\mathcal{E}_v[\partial_v \gamma]
        \right)
        + \ip{t}^{-1}\mathcal{N}_v[\ip{t}\partial_x\phi]\left(
        \mathcal{E}_x[\partial_x^2 \gamma] + \mathcal{E}_v[\partial^2_v \gamma]
        \right),
    \end{align*}
    while
    \begin{align*}
        B_3 + B_4 \le  \ip{t}^{-1}\mathcal{N}_v[\ip{t}^2\partial_x^2 \phi]\mathcal{E}_x[\partial_x \gamma] + \mathcal{N}_v[\ip{t}\partial_x\phi]\mathcal{E}_x[\partial_x^2 \gamma],
    \end{align*}
    so that by \eqref{sec7:boot:dec} and \eqref{sec7:bootass}
    \begin{equation}\label{eq:1dT1bd}
      \mathcal{T}_x\lesssim \left(\ip{t}^{-\frac12}+\ip{t}^{-\frac32}+\ip{t}^{-\frac52}\right)\eps^2\left(\mathcal{E}_v[\partial_v^2\gamma(t)]+\mathcal{E}_x[\partial_x^2\gamma(t)]\right)+\ip{t}^{-\frac32}\eps^3.
    \end{equation}

    We next study $\mathcal{T}_v$ in \eqref{sec7:t1t2}. With
    \begin{align*}
        \frac12\frac{d}{dt} \norm{\partial_v^{n+1} \gamma}^2_{L^2_{x,v}}&=\sum_{\substack{j=1}}^{n+1}
        \binom{n+1}{j}\iint 
        \partial_v^{n+1}\gamma t^{j}\partial_x^{j+1}\phi(x+tv,t)\partial_v^{n+2-j} \gamma dx dv\\
        &\qquad -\sum_{\substack{j=1}}^{n+1}
        \binom{n+1}{j}
        \iint 
        \partial_v^{n+1}\gamma (t\partial_x)^{j+1}\phi(x+tv,t)\partial_x \partial_v^{n+1-j} \gamma dx dv\\
        &=:C_1+C_2,
        \end{align*}
    we obtain that
        \begin{align*}
        C_1&\le \ip{t}^{-1}(n+1)! \norm{\partial_v^{n+1}\gamma}_{L^2_{x,v}} \sum_{j=1}^{n+1} \frac{\norm{\ip{t}^{j+1}\partial_x^{j+1}\phi}_{L^\infty_x}}{j!}
        \frac{\norm{\partial_v^{n+2-j}\gamma}_{L^2_{x,v}}}{(n+1-j)!},
    \end{align*}
    and
    \begin{align*}
        C_2 \le (n+1)! \norm{\partial_v^{n+1}\gamma}_{L^2_{x,v}}
        \sum_{j=1}^{n+1} \frac{\norm{\ip{t}^{j+1}\partial_x^{j+1}\phi}_{L^\infty_x}}{j!} \frac{\norm{\partial_x \partial_v^{n+1-j}}_{L^2_{x,v}}}{(n+1-j)!},
    \end{align*}
so that altogether
     \begin{align*}
        \sum_{n=0}^\infty 
        \frac{\lambda^n_t}{n!}\frac{d}{dt} \norm{\partial_v^{n+1}\gamma}_{L^2_{x,v}} 
        &\le 
        \ip{t}^{-1}\left(\sum_{j=0}^\infty \frac{\lambda_t^{j}}{j!} \norm{\ip{t}^{j+2}\partial_x^{j+2}\phi}_{L^\infty_x}\right)
        \left(\sum_{n=0}^\infty \frac{\lambda_t^{n}}{n!}\norm{\partial_v^{n+1} \gamma}_{L^2_{x,v}}\right) \\
        &\qquad + \ip{t}^{-1}\left(\sum_{j=0}^\infty \frac{\lambda_t^j}{j!} \norm{\ip{t}^{j+1}\partial_x^{j+1}\phi}_{L^\infty_x}\right)
        \left(\sum_{n=0}^\infty \frac{\lambda_t^{n}}{n!}\norm{\partial_v^{n+2} \gamma}_{L^2_{x,v}}\right) \\
        &\qquad + \left(\sum_{j=0}^\infty \frac{\lambda_t^j}{j!} \norm{\ip{t}^{j+2}\partial_x^{j+2}\phi}_{L^\infty_x}\right)
        \left(\sum_{n=0}^\infty \frac{\lambda_t^{n}}{n!}\norm{ \partial_x\partial_v^{n} \gamma}_{L^2_{x,v}}\right) \\
        &\qquad + \left(\sum_{j=0}^\infty \frac{\lambda_t^j}{j!} \norm{\ip{t}^{j+1}\partial_x^{j+1}\phi}_{L^\infty_x}\right)
        \left(\sum_{n=0}^\infty \frac{\lambda_t^{n}}{n!}\norm{ \partial_x\partial_v^{n+1} \gamma}_{L^2_{x,v}}\right)\\
        &=:D_1+D_2+D_3+D_4.
    \end{align*}
   We have
    \begin{align}
    \label{sec7:d1}
        D_1 + D_2 \le \ip{t}^{-1}\left(\mathcal{N}_v[\ip{t}^2\partial_x^2\phi]\mathcal{E}_v[\partial_v\gamma]+\mathcal{N}_v[\ip{t}\partial_x\phi]\mathcal{E}_v[\partial_v^2\gamma]\right),
    \end{align}
    while, by interpolation as in \eqref{sec7:interp} and \eqref{sec7:interp1} 
    \begin{align*}
        \mathcal{E}_v[\partial_x \gamma]\le\mathcal{E}_v[\partial_v \gamma]+\mathcal{E}_x[ \partial_x\gamma],\qquad \mathcal{E}_v[\partial_x\partial_v \gamma]\le \mathcal{E}_v[\partial^2_v \gamma]+
        \mathcal{E}_x[\partial^2_x\gamma],
    \end{align*}
    and we thus get
    \begin{align}
    \label{sec7:d2}
        D_3 + D_4 \le \mathcal{N}_v[\ip{t}^2\partial_x^2\phi](\mathcal{E}_x[\partial_x\gamma]+\mathcal{E}_v[\partial_v\gamma])+\mathcal{N}_v[\ip{t}\partial_x\phi](\mathcal{E}_x[\partial^2_x\gamma]+\mathcal{E}_v[\partial_v^2\gamma]).
    \end{align}
    Collecting \eqref{sec7:d1} and \eqref{sec7:d2} we obtain that
    \begin{equation}\label{eq:1dT2bd}
      \mathcal{T}_v\lesssim \left(\ip{t}^{-\frac12}+\ip{t}^{-\frac32}\right)\eps^2\left(\mathcal{E}_v[\partial_v^2\gamma]+\mathcal{E}_x[\partial^2_x\gamma]\right)+\ip{t}^{-\frac32}\varepsilon^3.
    \end{equation}
    Inserting \eqref{eq:1dT1bd} and \eqref{eq:1dT2bd} in \eqref{sec7:eqdxg} shows that for a universal constant $C>0$ there holds that
\begin{equation*}
   \frac{d}{dt} \left(\mathcal{E}_x[\partial_x \gamma]+\mathcal{E}_v[\partial_v \gamma]\right)
      \le(\dot\lambda_t+C\eps^2\ip{t}^{-\frac12}) \left(\mathcal{E}_{x}[\partial^2_x \gamma]+ \mathcal{E}_{v}[\partial_v^2 \gamma]\right)
      +C\ip{t}^{-\frac32}\varepsilon^3
      \le C \ip{t}^{-\frac32}\varepsilon^3
\end{equation*}
upon choosing $\lambda_t=R-C\eps^2\ip{t}^{\frac12}$ as in the statement, whereby the requirement that $\lambda_t>0$ yields the claimed time scale $T\gtrsim R^2\eps^{-4}$.    
\end{proof}

\addtocontents{toc}{\protect\setcounter{tocdepth}{0}}
\section*{Acknowledgments}
The authors would like to thank J.\ Smulevici for comments on a first version of this manuscript. K.\ Widmayer and S.\ Rossi gratefully acknowledge support of the SNSF through the grant PCEFP2\_203059.
M.\ Iacobelli acknowledges the support of the National Science Foundation through the grant No.\ DMS-1926686.
All authors acknowledge the support of the NCCR SwissMAP.

\addtocontents{toc}{\protect\setcounter{tocdepth}{1}}

\appendix
\section{Proof of an auxiliary statement}\label{sec:appdx}
\begin{proof}[Proof of \eqref{sec3:PL}]
For $h:\R^2\times\R^2\to\R$ we introduce the Littlewood-Paley decomposition
\begin{equation*}
h_M(x,v):=\mathcal{F}^{-1}_{\eta \to v}\left[\psi\left(M^{-1}\eta\right) \mathcal{F}_{v \to \eta}h\right](x,v),
\end{equation*}
where $\mathcal{F}$ is the Fourier transform in the velocity variable, $M \in 2^{\mathbb{Z}}$ a dyadic number and $\psi$ a Littlewood-Paley function, i.e.\ a bump function compactly supported in $2^{-1} \le |\eta| \le 2$ (see e.g. \cite[Appendix A]{T06}). We see that with $h = \sum_M h_M$ and
\begin{equation}
\label{sec3:Paley}
h_M\Big(a, \frac{x-a}{t}\Big) - h_M\Big(a, \frac{x}{t}\Big)
= \sum_{j=1}^2\int_0^1  
\frac{a_j}{t} \partial_{v_j}h_M\Big(a, \frac{x}{t}-\theta \frac{a}{t}\Big) d\theta,
\end{equation}
there holds that
\begin{align}
\label{sec3:Paley2}
     \Big\|
     h\Big(a,\frac{x-a}{t}\Big) - h\Big(a, \frac{x}{t}\Big)
     \Big\|_{ L^2_aL^\infty_x} 
     &\lesssim 
     \sum_{M \in 2^{\Z}} 
     \min
     \left\{
     t^{-1} M^2 \|x h \|_{L^2_{x,v}}, M^{-1}\|h\|_{L^2_x H^2_v}
     \right\},
\end{align}
where the first term inside the minimum is obtained from the r.h.s of \eqref{sec3:Paley} using that 
\begin{equation*}
\Bigg \|\sum_{j=1}^2\int_0^1  
\frac{a_j}{t} \partial_{v_j}h_M\Big(a, \frac{x}{t}-\theta \frac{a}{t}\Big) d\theta\Bigg\|_{L^2_aL^\infty_x }
\lesssim
t^{-1}\| x \partial_{v}h_M\|_{L^2_xL^\infty_v }
\end{equation*}
and then using Bernstein's inequalities 
$\|h_M\|_{L^2_xL^\infty_v } \lesssim M \|h_M\|_{L^2_{x,v}}$
and $\|\nabla_v h_M\|_{L^2_{x,v}} \simeq M \|h_M\|_{L^2_{x,v}}$ (see e.g. \cite{T06}).
The second term inside the minimum in \eqref{sec3:Paley2}
is obtained from the l.h.s of \eqref{sec3:Paley} by using again that $\|h_M\|_{L^2_xL^\infty_v }\lesssim M \|h_M\|_{L^2_{x,v}}\approx M^{-1}\|h_M\|_{L^2_x H^2_v}$.\\
For an integer $B>0$ to be fixed, we get 
\begin{equation*}
 \sum_{M \in 2^{\Z}} 
     \min
     \Big\{
     t^{-1} M^2 \|x h \|_{L^2_{x,v}}, M^{-1}\|h\|_{L^2_x H^2_v}
     \Big\}
     \lesssim 
     t^{-1} \|xh\|_{L^2_{x,v}}\sum_{j \le B}  2^{2j}
    + \|h\|_{L^2_x H^2_v}\sum_{j >B} 2^{-j}.
\end{equation*}
Optimizing in $B$, we conclude \eqref{sec3:PL}.  
\end{proof}

\bibliographystyle{siam}
\bibliography{bibliography} 
\end{document}